\newtheorem{theorem}{Theorem} [section]
\newtheorem{proposition}[theorem]{Proposition}
\newtheorem{remark}[theorem]{Remark}
\theoremstyle{definition}
\newcommand{\C}{\mathbb{C}}
\newcommand{\Z}{\mathbb{Z}}
\newcommand{\N}{\mathbb{N}}
\tikzset{
	master/.style={
		execute at end picture={
			\coordinate (lower right) at (current bounding box.south east);
			\coordinate (upper left) at (current bounding box.north west);
		}
	},
	slave/.style={
		execute at end picture={
			\pgfresetboundingbox
			\path (upper left) rectangle (lower right);
		}
	}
}
\let\oldbibliography\thebibliography
\renewcommand{\thebibliography}[1]{\oldbibliography{#1}
\setlength{\itemsep}{-0.5pt}}
\def\XXint#1#2#3{{\setbox0=\hbox{$#1{#2#3}{\int}$}
\vcenter{\hbox{$#2#3$}}\kern-.5\wd0}}
\tikzset{->-/.style={decoration={
				markings,
				mark=at position #1 with {\arrow{latex}}},postaction={decorate}}}
	\tikzset{-<-/.style={decoration={
				markings,
				mark=at position #1 with {\arrowreversed{latex}}},postaction={decorate}}}
\tikzset{cross/.style={cross out, draw, 
         minimum size=2*(#1-\pgflinewidth), 
         inner sep=0pt, outer sep=0pt}}
\numberwithin{equation}{section}
\def\ds{\displaystyle}
\def\bigO{{\cal O}}
\newcommand{\oset}[3][0ex]{%
  \mathrel{\mathop{#3}\limits^{
    \vbox to#1{\kern-2\ex@
    \hbox{$\scriptstyle#2$}\vss}}}}
\def\be{\begin{equation}}
\def\ee{\end{equation}}
\begin{document}
\title{\vspace{-1cm} Counting domino and lozenge tilings of reduced domains \\ with Pad\'{e}-type approximants}
\author{Christophe Charlier and Tom Claeys}

\maketitle

\begin{abstract}
We introduce a new method for studying gap probabilities in a class of discrete determinantal point processes with double contour integral kernels. This class of point processes includes uniform measures of domino and lozenge tilings as well as their doubly periodic generalizations. We use a Fourier series approach to simplify the form of the kernels and to characterize gap probabilities in terms of Riemann-Hilbert problems. 

As a first illustration of our approach, we obtain an explicit expression for the number of domino tilings of reduced Aztec diamonds in terms of Padé approximants, by solving the associated Riemann-Hilbert problem explicitly. As a second application, we obtain an explicit expression for the number of lozenge tilings of (simply connected) reduced hexagons in terms of Hermite-Pad\'e approximants. For more complicated domains, such as hexagons with holes, the number of tilings involves a generalization of Hermite-Padé approximants.
\end{abstract}
\noindent
{\small{\sc AMS Subject Classification (2020)}: {41A21, 30E25, 52C20}, 60G55.}

\noindent
{\small{\sc Keywords}: Gap probabilities, Padé approximants, tilings, point processes.}


\section{Introduction}

We will study gap probabilities in a class of discrete determinantal point processes and characterize them by Riemann-Hilbert problems. The simplest examples are  point processes associated to domino tilings of Aztec diamonds and to lozenge tilings of hexagons. Here, the gap probabilities encode the number of (domino or lozenge) tilings of certain reductions of the initial domains (Aztec diamonds or hexagons), in which regions are removed.
We will start by introducing the tiling models in detail and by stating our results specialized to these models. Afterwards, we will explain our general methodology which leads to these results. In view of future research, we strongly believe that our Riemann-Hilbert characterizations will open the door towards asymptotic analysis in various tiling models.

\subsection{Domino tilings of reduced Aztec diamonds}
\paragraph{Domino tilings of the Aztec diamond.}
The Aztec diamond $A_N$ of size $N$ is a two-dimensional region consisting of the union of all $2N(N+1)$ squares $S_{j,k}:=[j,j+1]\times [k,k+1]\subset \mathbb R^2$ for which $j,k\in\mathbb Z$ and $S_{j,k}\subset\{(x,y)\in\mathbb R^2: |x|+|y|\leq N+1\}$. The Aztec diamond was first introduced in 1992 by Elkies, Kuperberg, Larsen and Propp \cite{EKLP}. A domino is a $1\times 2$ or $2\times 1$ rectangle of the form $[j,j+1]\times [k,k+2]$ (called  vertical domino) or $[j,j+2]\times [k,k+1]$ (called  horizontal domino). A horizontal domino is called a north domino if $j+k+N$ is even, and a south domino if $j+k+N$ is odd. Similarly, a vertical domino is called an east domino if $j+k+N$ is even, and a west domino if $j+k+N$ is odd. A domino tiling $T$ of $A_N$ consists of a collection of $N(N+1)$ dominoes which cover the whole region $A_N$ and whose interiors are disjoint, see Figure \ref{fig:Aztec} (left). 

\begin{figure}
\begin{center}
\begin{tikzpicture}[master]
\node at (0,0) {\includegraphics[width=6cm]{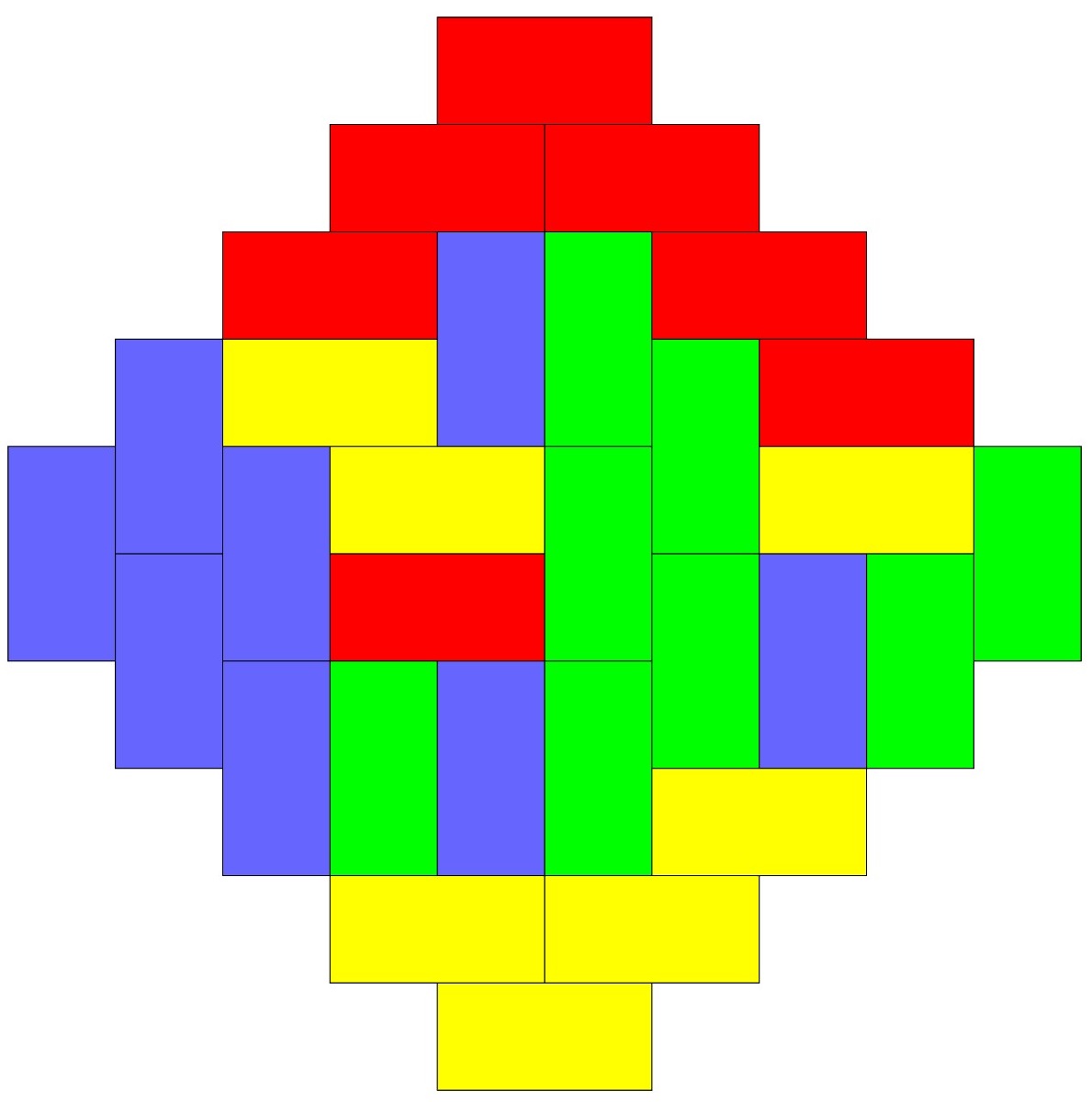}};
\end{tikzpicture}
\begin{tikzpicture}[slave]
\node at (0,0) {\includegraphics[width=6cm]{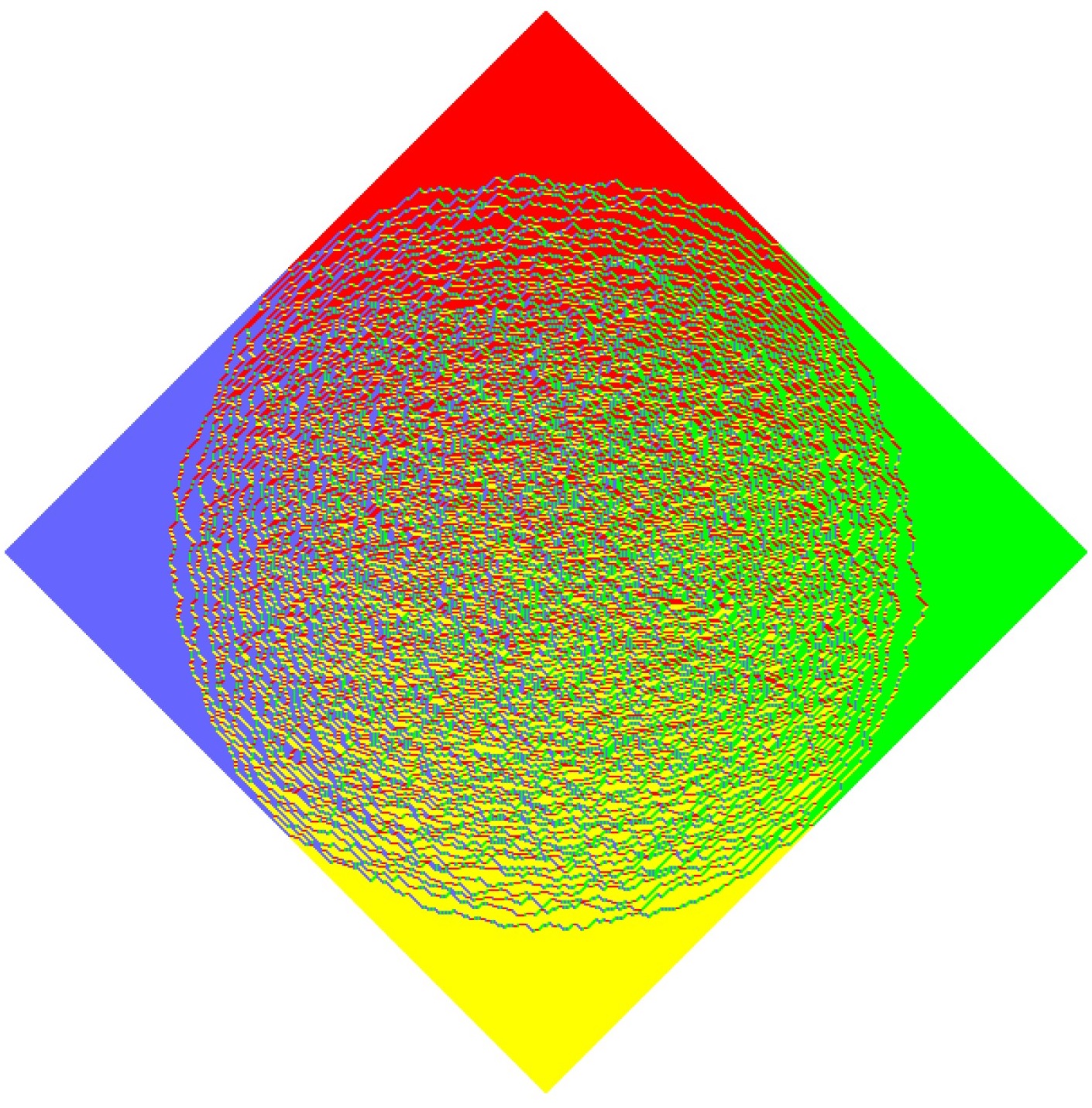}};
\end{tikzpicture}
\end{center}
\caption{\label{fig:Aztec}Left: a tiling of $A_{5}$. The north, south, east and west dominoes are shown in red, yellow, green and blue, respectively. Right: a tiling of $A_{300}$ chosen uniformly at random.}
\end{figure}

A celebrated result from \cite{EKLP} states that the number of domino tilings of $A_N$ is equal to $2^{\frac{N(N+1)}{2}}$. 
More generally, the generating function for the number of vertical dominoes $v(T)$ in a tiling $T$ has an explicit expression: we have the identity
\begin{equation}\label{eq:ADT}F_N(a):=\sum_{T\in\mathcal T(A_N)}a^{v(T)}=(1+a^2)^{\frac{N(N+1)}{2}},\qquad 0<a\leq 1,\end{equation}
where the sum runs over the set $\mathcal T(A_N)$ of all domino tilings of $A_N$ \cite{EKLP}.
The Aztec diamond is one of the rare non-trivial examples of sequences of domains for which the number of domino tilings becomes large but an explicit expression is still available. Other examples of such domains are rectangles \cite{K, TF} and certain triangular deformations of the Aztec diamond \cite{CHK, dF}. Domino tilings of the Aztec diamond have been studied intensively in the past two decades because of their remarkable asymptotic behavior in the limit where the order $N$ gets large. One can then distinguish a frozen (or solid) region, outside the circle inscribed in the square $|x|+|y|=N+1$, and a rough (or liquid) region inside the circle, see Figure \ref{fig:Aztec} (right). This phenomenon is known as the arctic circle theorem, and the transition between the rough and the frozen phase can be described in terms of the Airy process, a universal point process arising in a large number of random matrix and random interface growth models \cite{Joh2}.
In recent years, more complicated doubly-periodic probability distributions on domino tilings of Aztec diamonds have been studied. In such models, one can identify a third, smooth (or gas), phase in the large $N$ asymptotics \cite{CY2014, CJ, BCJ, DK2021, BeD, Berggren, Ruelle2022, BD2023, BB2023, BT2024, BobBob24, BB2024, Pior2024, KP2024, BN2025, Shea2025}.

\medskip

\paragraph{Domino tilings of reduced Aztec diamonds and Pad\'e approximants.}
In this paper, we consider reduced Aztec diamonds, i.e.\ Aztec diamonds with certain unit squares removed. The simplest situation of such a domain is an Aztec diamond with an approximate rectangle removed, see Figure \ref{fig:ADreduced}. Such regions were first introduced by Colomo and Pronko in \cite{CP2013}. We restrict ourselves to this situation to present our result here, although we will consider more general domains with holes later in Section \ref{section:tiling}.
More precisely, for $m=1,\ldots, {N}$ and $k=1,\ldots, m+1$, $A_N^{m,k}$ is the union of all unit squares $S_{i,j}$ for which $i,j\in\mathbb Z$ and 
\[S_{i,j}\subset\{(x,y)\in\mathbb R^2: |x|+|y|\leq N+1\ \mbox{and}\ y\leq \max\{2m-1-N-x,x-2m-1+N+2k\}\}.\] This implies that $N-m+1$ diagonal rows of $m-k+1$ horizontal dominoes are cut out of the Aztec diamond. Naturally, one expects that the removed region, if sufficiently large, will affect the rough and smooth regions of a random tiling, as illustrated in Figure \ref{fig:ADreduced} (right), and will dramatically decrease the number of possible tilings. While such asymptotic questions are of great interest, we will focus here on exact formulas for finite $N,k,m$. We plan to address such  asymptotic problems in future research\footnote{In our recent preprint \cite{CC2025}, which we completed after the present paper, we established detailed large $N$ asymptotics for the (weighted) number of domino tilings of reduced Aztec diamonds with a removed region of macroscopic size}.

\begin{figure}
\begin{center}
\begin{tikzpicture}[master]
\node at (0,0) {\includegraphics[width=6cm]{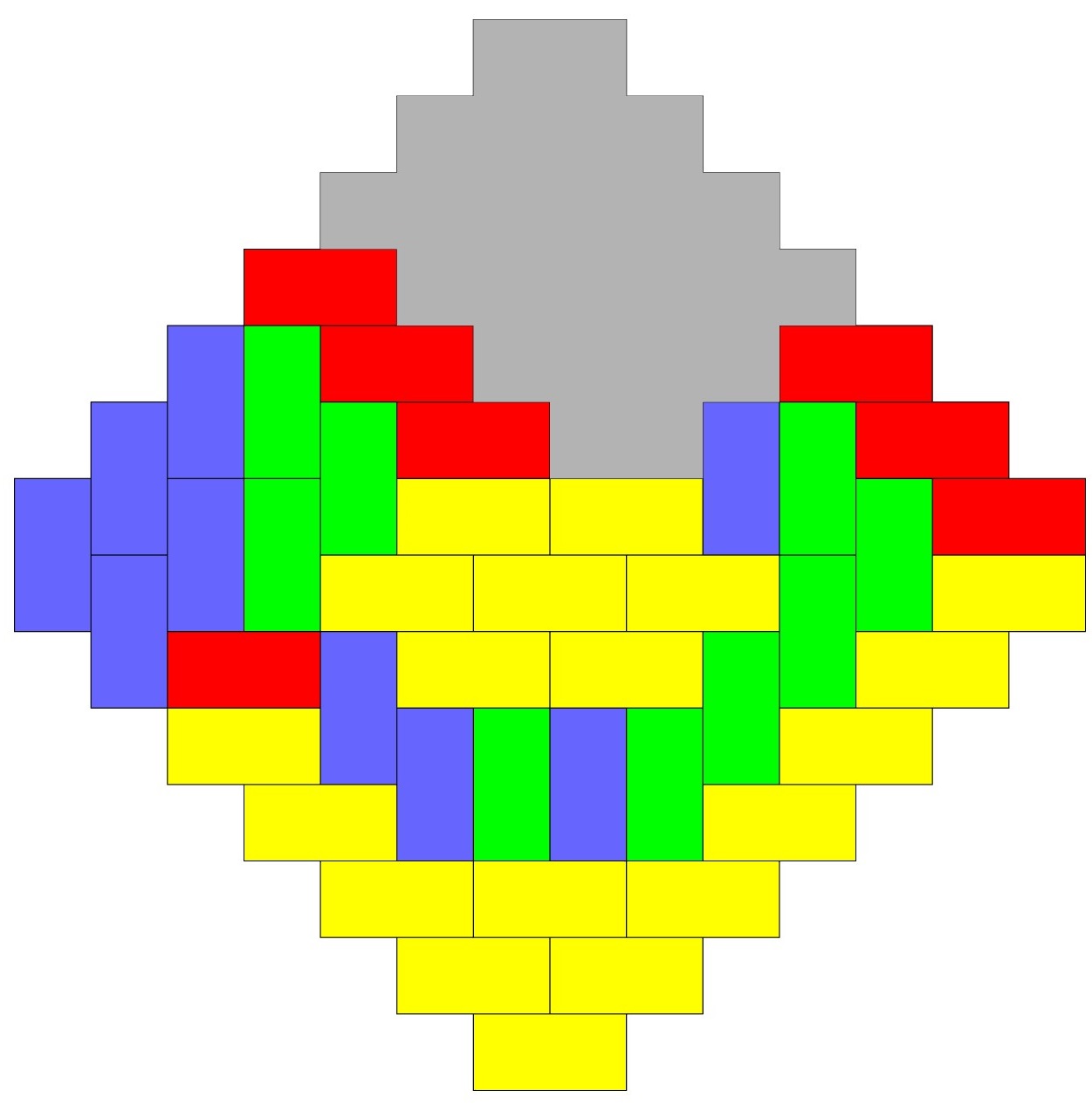}};
\draw[->-=0.9999, -<-=0] (-1.4,2)--(-0.4,3);
\node[rotate=45] at (-1.1,2.7) {\footnotesize $N+1-m$};
\draw[->-=0.9999, -<-=0] (0.5,3)--(1.8,1.7);
\node[rotate=-45] at (1.1,2.7) {\footnotesize $m-k+1$};
\end{tikzpicture}
\begin{tikzpicture}[slave]
\node at (0,0) {\includegraphics[width=6cm]{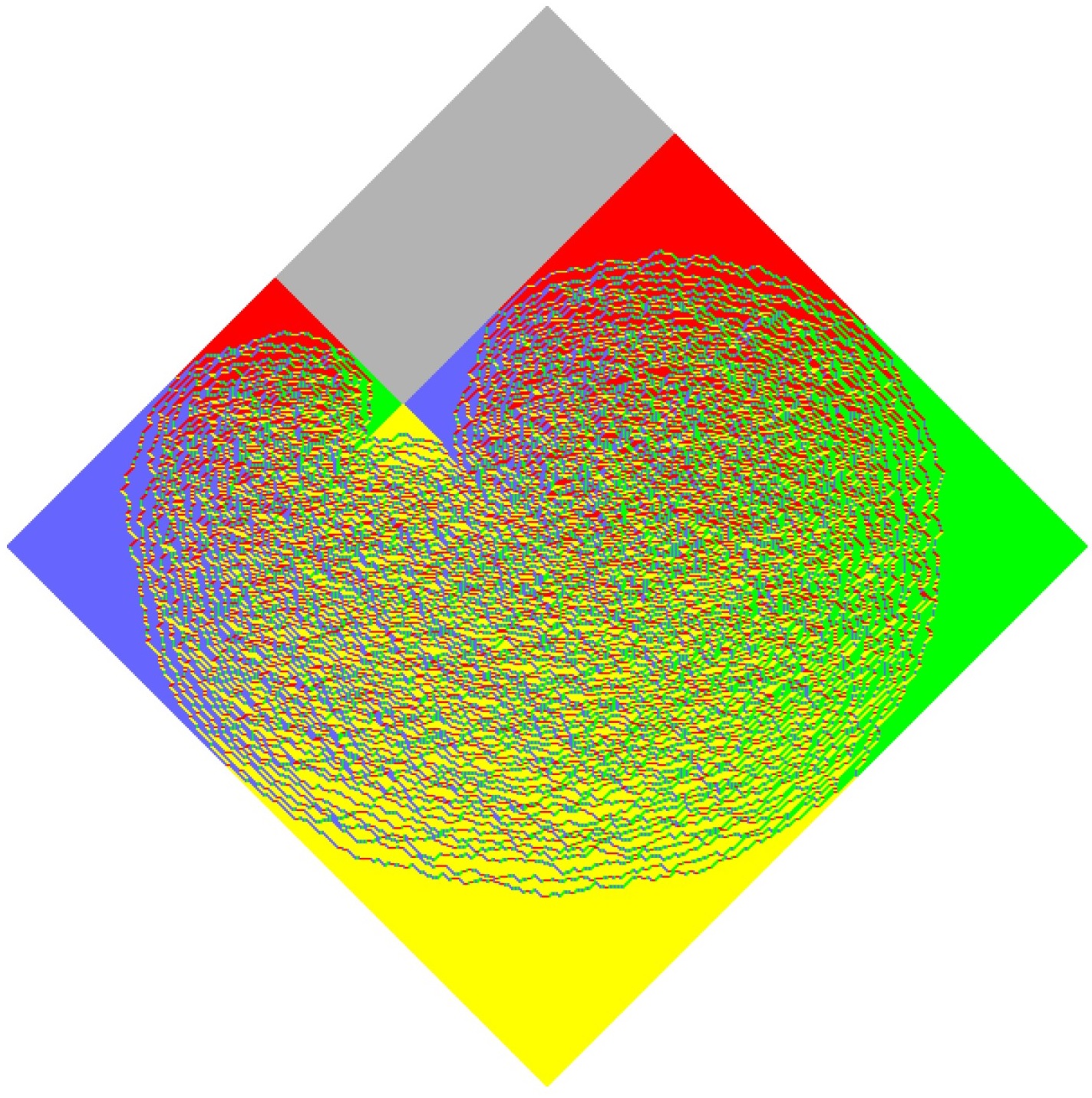}};
\end{tikzpicture}
\end{center}
\caption{\label{fig:ADreduced}Left: a tiling of $A_{7}^{5,2}$. The north, south, east and west dominoes are shown in red, yellow, green and blue, respectively. The set $A_{N}\setminus A_{N}^{m,k}$ is shown in grey; it contains $N+1-m$ corners on the north-west side and $m-k+1$ corners on the north-east side. Right: a tiling of $A_{300}^{150,80}$ chosen uniformly at random.}
\end{figure}

\begin{figure}
\begin{center}
\begin{tikzpicture}[master]
\node at (0,0) {\includegraphics[width=6cm]{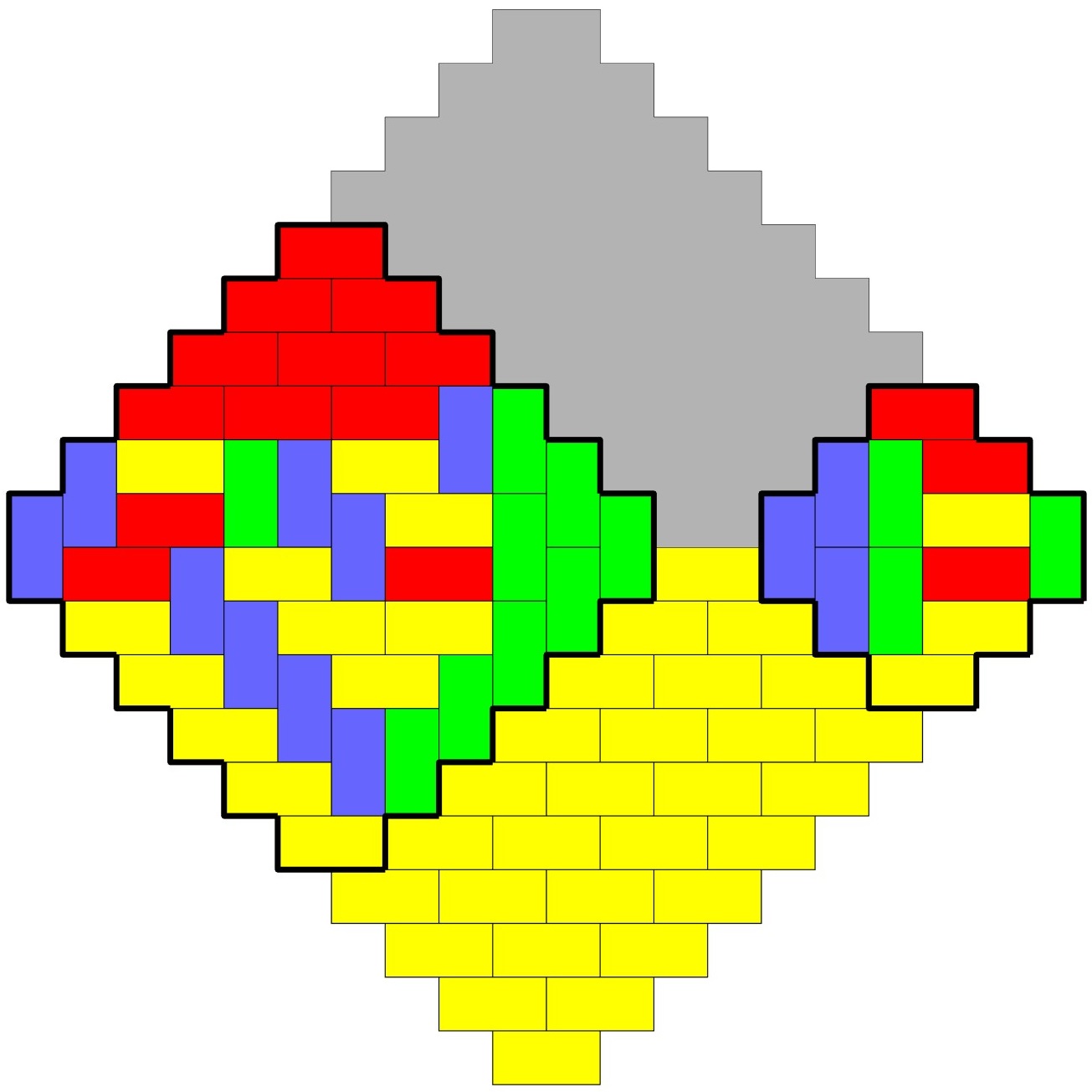}};
\end{tikzpicture}
\begin{tikzpicture}[slave]
\node at (0,0) {\includegraphics[width=6cm]{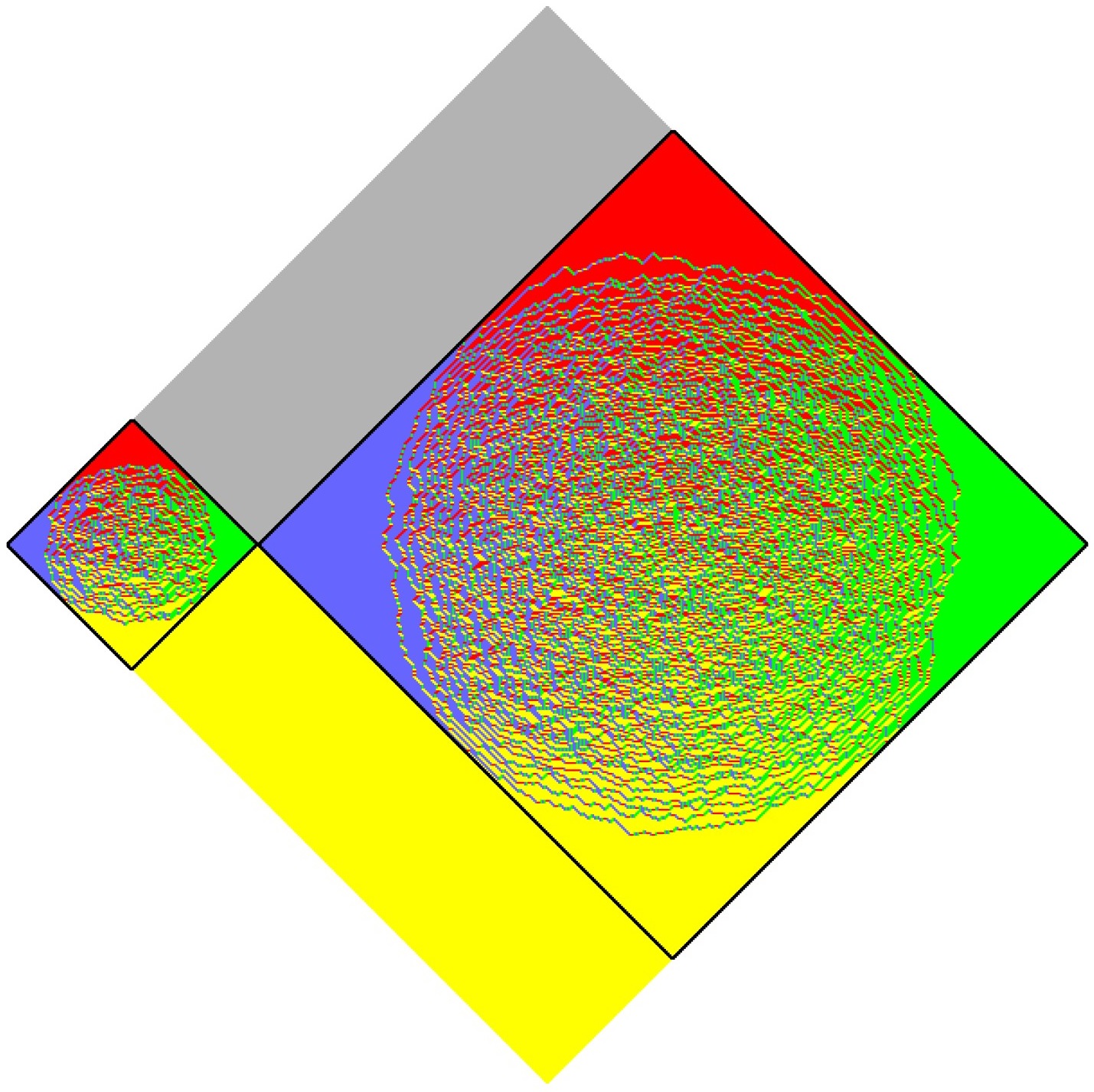}};
\end{tikzpicture}
\end{center}
\caption{\label{fig:mirror frozen}Any tiling of $A_{N}^{m,1}$ decouples into a tiling of an Aztec diamond of order $m-1$ and an Aztec diamond of order $N-m$, whose boundaries are shown with thick black lines. The set $A_{N}\setminus A_{N}^{m,k}$ is shown in grey, and its mirror image with respect to $y=0$ is tiled only with south dominoes.  Left: a tiling of $A_{10}^{7,1}$. Right: a tiling of $A_{300}^{70,1}$ chosen uniformly at random.}
\end{figure}

\begin{figure}
\begin{center}
\begin{tikzpicture}[master]
\node at (0,0) {\includegraphics[width=6cm]{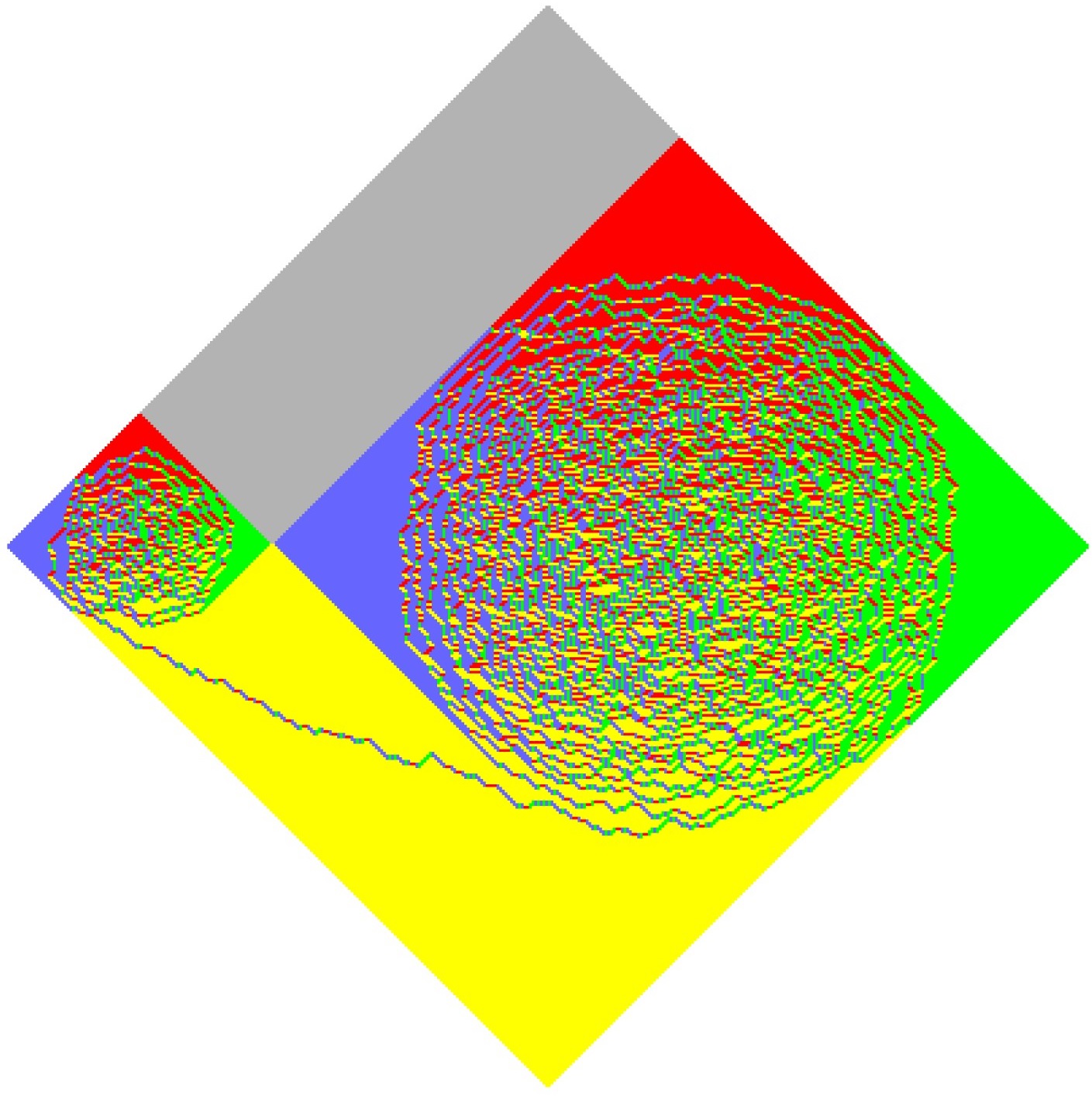}};
\end{tikzpicture}
\begin{tikzpicture}[slave]
\node at (0,0) {\includegraphics[width=6cm]{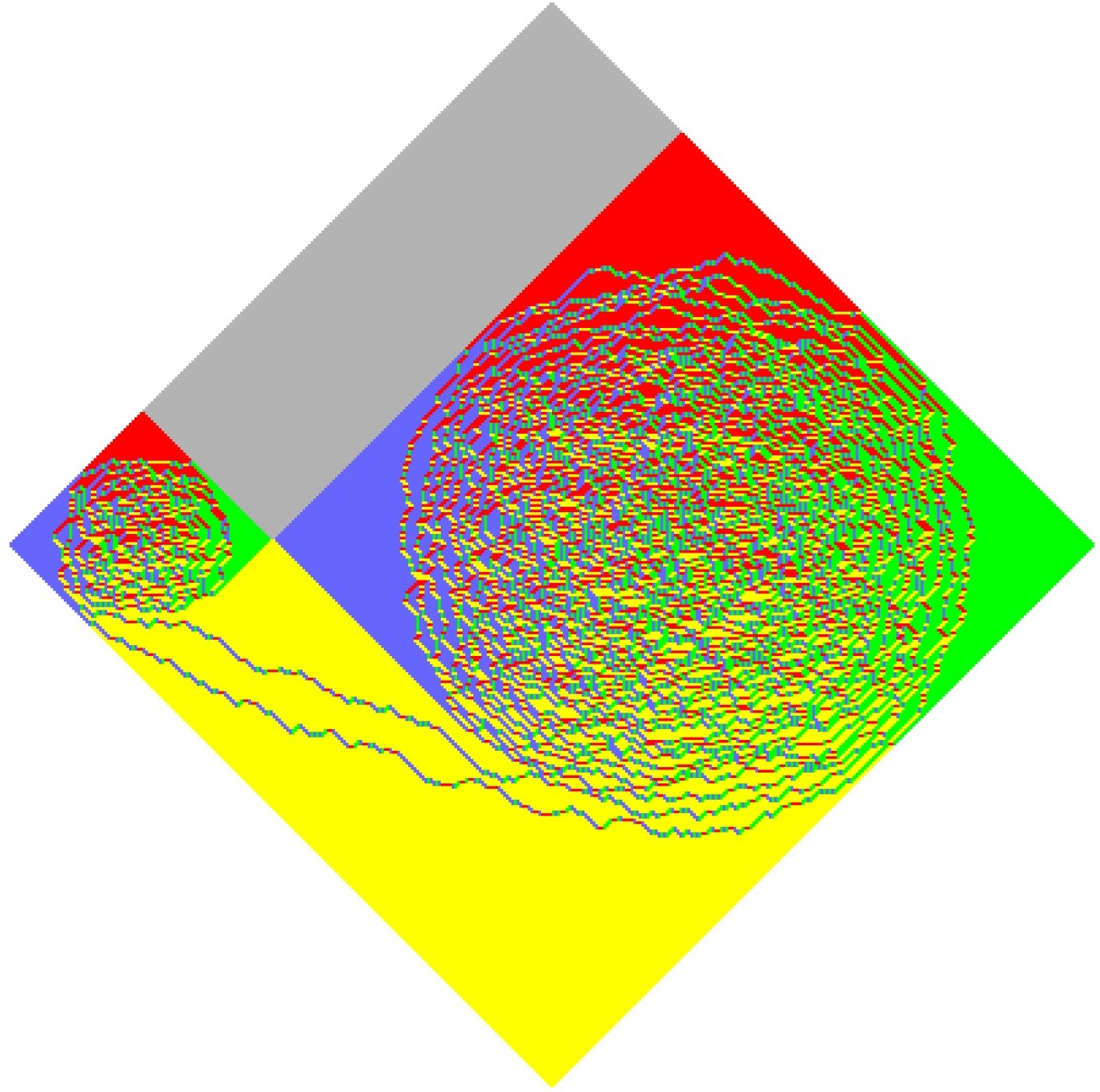}};
\end{tikzpicture}
\end{center}
\caption{\label{fig:one or two paths}Left: a tiling of $A_{200}^{50,2}$ chosen uniformly at random. Right: a tiling of $A_{200}^{50,3}$ chosen uniformly at random.}
\end{figure}

We will prove an explicit expression for the generating function
\be
F_{N}^{m,k}(a):=\sum_{T\in\mathcal T(A_N^{m,k})}a^{v(T)},\qquad 0<a\leq 1,
\ee
where $\mathcal T(A_N^{m,k})$ is the set of all domino tilings of the reduced Aztec diamond $A_N^{m,k}$.
Observe that $F_{N}^{m,k}(1)$ is the number of tilings of $A_N^{m,k}$, and that $F_{N}^{m,k}(a)$ is the partition function for a biased probability measure on $A_N^{m,k}$, for which tilings with fewer vertical dominoes are more likely to occur.

There are a number of noticeable special cases, in which we have simple exact expressions for $F_N^{m,k}(a)$.
\begin{enumerate}
\item For $k=m+1$, no dominoes are removed from the Aztec diamond, such that 
\[F_N^{m,m+1}(a)=F_N(a).\]  
\item We did not admit $k\leq 0$, because in this case $A_{N}^{m,k}$ would not be tileable. This will become clear later on, see Remark \ref{remark:notilings}.
 \item For $k=1$, we will show that the removed region of $(N-m+1)\times m$ north dominoes, implies that the mirror image of this region with respect to the horizontal axis $y=0$ contains only south dominos. The remaining part of $A_N$ consists of two disjoint Aztec diamonds, one of order $m-1$ on the left, and one of order $N-m$ on the right, see Figure \ref{fig:mirror frozen} and Remark \ref{remark:notilings}. We then have
\begin{equation}\label{eq:formulamirror}F_N^{m,1}(a)=F_{m-1}(a)F_{N-m}(a)=\left(1+a^2\right)^{\frac{N(N+1)}{2}-m(N+1-m)}.
\end{equation}
\item For $m=k=N$, only the horizontal top domino is removed from $A_N$. This excludes relatively few tilings, namely the ones with at least one vertical domino on top of the Aztec diamond. But for such tilings, both top dominos are necessarily vertical, and consequently so are all the dominos at the north-west and north-east edges of the Aztec diamond. In other words, the tilings that are excluded correspond to tilings of an Aztec diamond of order $N-1$. We thus have
\[F_N^{N,N}(a)=F_N(a)-a^{2N}F_{N-1}(a).\]
\end{enumerate}

In order to present our result, we need a certain {\em Pad\'e approximation problem}. Consider the degree $N-j+1$ polynomial 
\begin{align}\label{def:f}
f_N^{m,j}(z;a)=z^{m-j}(1-az)^{N-m+1}.
\end{align}
The Padé approximant \cite{Pade} of this function near $-a$ of type $[m-j,j-1]$ is the rational function $p_N^{m,j}(z;a)/q_N^{m,j}(z;a)$, where $p_N^{m,j}$ is a polynomial of degree at most $m-j$ and where $q_N^{m,j}$ is a polynomial of degree $j-1$, which approximates $f_N^{m,j}$ to order $O((z+a)^{m})$ as $z\to -a$. More precisely, $p_N^{m,j}$ and $q_N^{m,j}$ satisfy 
\begin{align}\label{def of p and q in intro}
p_N^{m,j}(z;a)-f_N^{m,j}(z;a)q_N^{m,j}(z;a)=O((z+a)^m),\qquad z\to -a.
\end{align}
Observe that this is a linear system of $m$ equations for the $(m-j+1)+j=m+1$ unknown coefficients of the polynomials  $p_N^{m,j}$ and $q_N^{m,j}$.
We normalize $q_N^{m,j}$ such that $q_N^{m,j}(0)=1$, which leaves us with a linear system of $m$ equations for $m$ unknown coefficients. 
As we will see, this Pad\'e approximant exists uniquely for $N\in\mathbb N$, $m\in\{1,\ldots, N\}$, $j\in\{1,\ldots, m+1\}$.
We denote $\kappa_N^{m,j}(a)$ for the leading coefficient of $p_N^{m,j}(z;a)$. Our first main result, Theorem \ref{theorem:tilings}, states that $\kappa_N^{m,k}(a)$ is equal to the ratio $F_{N}^{m,k+1}/F_{N}^{m,k}$. 
\begin{theorem}\label{theorem:tilings}
{For $N\in \N_{>0}:=\{1,2,\ldots\}$,} $m\in\{1,\ldots, N \}$, $k\in\{1,\ldots, m+1\}$, $0<a\leq 1$, we have 
\begin{align}\label{eq in main thm}
F_{N}^{m,k}(a)=(1+a^2)^{\frac{N(N+1)}{2}}\prod_{j=k}^{m}\frac{1}{\kappa_N^{m,j}(a)}.
\end{align}
In particular, the number of tilings of $A_N^{m,k}$ is equal to $2^{\frac{N(N+1)}{2}}\prod_{j=k}^{m}\frac{1}{\kappa_N^{m,j}(1)}$.
\end{theorem}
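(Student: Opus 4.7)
The strategy is to telescope. Since $F_N^{m,m+1}(a)=F_N(a)=(1+a^2)^{N(N+1)/2}$ (the empty-product case $k=m+1$), the theorem reduces to the one-step identity
\[
\frac{F_N^{m,j}(a)}{F_N^{m,j+1}(a)}=\frac{1}{\kappa_N^{m,j}(a)},\qquad j\in\{1,\ldots,m\}.
\]
Going from $k=j+1$ to $k=j$ corresponds to removing one extra diagonal row of $N-m+1$ north dominoes. I plan to interpret this one-step ratio as a gap probability in the determinantal point process associated with the $a$-weighted tiling measure on $A_N$, and then solve the resulting Riemann--Hilbert problem explicitly in terms of the Padé data of $f_N^{m,j}(z;a)$.

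First, I would represent $a$-weighted domino tilings of $A_N$ as a determinantal point process, either via the Lindström--Gessel--Viennot formalism applied to the non-intersecting path encoding of tilings, or directly via the double contour integral kernel of Kenyon--Johansson type. Tilings of $A_N^{m,k}$ correspond to configurations in $A_N$ for which the removed corner squares are forced to be covered by north dominoes; in the particle picture, this is an event of gap type (no particles in a prescribed ``frozen'' region, cf.\ the mirror-image observation around Figure \ref{fig:mirror frozen}). Consequently $F_N^{m,k}(a)/F_N(a)$ equals such a gap probability, which is a finite determinant built from values of the DPP kernel on the pinned sites.

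Next, I would apply the paper's Fourier-series transformation to the double contour integral kernel. The Fourier reduction (in the direction along the removed rows) converts the finite gap determinant into data characterized by a small Riemann--Hilbert problem whose jump datum involves the weight $f_N^{m,j}(z;a)=z^{m-j}(1-az)^{N-m+1}$: the factor $z^{m-j}$ encodes the vertical position of the row being removed, while $(1-az)^{N-m+1}$ reflects the length of the row and the $a$-weighting. Solving this RH problem is tantamount to constructing the Padé approximant $p_N^{m,j}/q_N^{m,j}$ of $f_N^{m,j}$ near $z=-a$ of type $[m-j,j-1]$, so $p_N^{m,j}$ and $q_N^{m,j}$ appear as explicit entries of the RH solution. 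Reading off the one-step ratio from this solution produces exactly the leading coefficient $\kappa_N^{m,j}(a)$ of $p_N^{m,j}$ as the reciprocal of $F_N^{m,j}/F_N^{m,j+1}$, closing the telescoping argument.

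The main obstacle is the careful bookkeeping in matching the finite gap determinant with the Padé problem: one must verify that the Fourier transform of the DPP kernel on the pinned sites yields exactly the weight $f_N^{m,j}$ with the correct contour and normalization $q_N^{m,j}(0)=1$, and that the ``one more pinned row'' operation translates precisely into the leading-coefficient extraction. As a byproduct the unique solvability of the Padé problem must be established for all $N,m,j$ in the admissible range (equivalent to unique solvability of the RH problem, which also explains the non-tileability for $k\le 0$ referenced in Remark \ref{remark:notilings}). A nontrivial consistency check is the $k=1$ case of \eqref{eq:formulamirror}, which forces the identity $\prod_{j=1}^{m}\kappa_N^{m,j}(a)=(1+a^2)^{m(N+1-m)}$; verifying this directly from the Padé side provides a useful sanity test on the normalizations.
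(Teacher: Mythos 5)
Your proposal follows essentially the same route as the paper: Fredholm/gap-probability representation of $F_N^{m,k}(a)/F_N(a)$ via the non-intersecting path encoding (Proposition \ref{prop:dominoFredholm}), Fourier-series conjugation to an integrable kernel (Proposition \ref{prop:dominointegrable}), the Riemann--Hilbert characterization of the one-step ratio as $U_{11}(0)$ (Proposition \ref{prop:dominoRH}), explicit solution of that RH problem in terms of the type $[m-j,j-1]$ Pad\'e approximant of $f_N^{m,j}$ (Proposition \ref{prop:dominoPade}), and the final telescoping using $F_N^{m,m+1}=F_N$. The plan is correct and matches the paper's argument step for step.
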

\begin{remark}\label{remark:numerical check}
The right-hand side of \eqref{eq in main thm} can be evaluated numerically by solving the associated linear systems in \eqref{def of p and q in intro}. On the other hand, the left-hand side of \eqref{eq in main thm} can be computed independently using Propp's shuffling algorithm \cite{ProppShuffling}. Using this approach, we verified Theorem \ref{theorem:tilings} numerically for $N=1,\ldots,10$, all admissible values of $m,k$, and several values of $a$. We verified many results in this paper numerically in the same way, such as Theorems \ref{theorem:hextiling}, \ref{theorem:tilingmulti} and \ref{thm:hexagon multi gap} below.
\end{remark}
\begin{remark}
Since $F_N^{m,1}(a)=\left(1+a^2\right)^{\frac{N(N+1)}{2}-m(N+1-m)}$, we find the remarkable identity
\[\prod_{j=1}^{m}{\kappa_N^{m,j}(a)}=\left(1+a^2\right)^{m(N+1-m)}.\]
As a consequence, we can write our result also as
\[F_{N}^{m,k}(a)=(1+a^2)^{\frac{N(N+1)}{2}-m(N+1-m)}\prod_{j=1}^{k-1}\kappa_N^{m,j}(a).\]
\end{remark}
\begin{remark}
Let us count tilings in the simple cases $N=2$ and $N=3$:
\begin{itemize}
\item When $N=2$, we have
$F_2(1)=8$. For the reduced Aztec diamonds $A_2^{m,k}$, we count
\[\left(F_2^{m,k}(1)\right)_{k,m=1}^{3,2}=
\begin{pmatrix}
2&2\\8&6\\
-&8
\end{pmatrix},\qquad \left(F_2^{m,k+1}(1)/F_2^{m,k}(1)\right)_{k,m=1}^{2}=
\begin{pmatrix}
4&3\\-&4/3
\end{pmatrix}
.\]
The corresponding functions $f_N^{m,k}$ and their Pad\'e approximants are
\[\left(f_2^{m,k}(z;1)\right)_{k,m=1}^{2}=
\begin{pmatrix}
(1-z)^2&z(1-z)\\
-&1-z
\end{pmatrix},\quad \left(\frac{p_2^{m,k}(z;1)}{q_2^{m,k}(z;1)}\right)_{k,m=1}^{2}=
\begin{pmatrix}
4&3z+1\\
-&\frac{4/3}{1+z/3}
\end{pmatrix}
.\]
\item Similarly, for $N=3$, we have
$F_3(1)=64$ and
\begin{align*}
& \left(F_3^{m,k}(1)\right)_{k,m=1}^{4,3}=
\begin{pmatrix}
8 & 4 & 8 \\
64 & 32 & 32 \\
- & 64 & 56 \\
- & - & 64
\end{pmatrix}, & & \left(f_3^{m,k}(z;1)\right)_{k,m=1}^{3}=
\begin{pmatrix}
(1-z)^3 & z(1-z)^{2} & z^{2}(1-z) \\
- & (1-z)^{2} & z(1-z) \\
- & - & 1-z
\end{pmatrix}, \\
& \left(\frac{F_3^{m,k+1}(1)}{F_3^{m,k}(1)}\right)_{k,m=1}^{3}=
\begin{pmatrix}
8 & 8 & 4 \\
- & 2 & \frac{7}{4} \\
- & - & \frac{8}{7}
\end{pmatrix}, & & \left(\frac{p_3^{m,k}(z;1)}{q_3^{m,k}(z;1)}\right)_{k,m=1}^{3}=
\begin{pmatrix}
8 & 8z+4 & 4z^{2}+3z+1 \\
- & \frac{2}{1+\frac{z}{2}} & \frac{\frac{7}{4}z+\frac{1}{4}}{1+\frac{z}{4}} \\
- & - & \frac{\frac{8}{7}}{1+\frac{4}{7}z+\frac{1}{7}z^{2}}
\end{pmatrix}.
\end{align*}
\end{itemize} 
In both cases, the leading coefficients of the numerators in the matrix $(p_N^{m,k}(z;1)/q_N^{m,k}(z;1))_{k,m=1}^{N}$ match with the entries of $(F_N^{m,k+1}(1)/F_N^{m,k}(1))_{k,m=1}^{N}$, which provides a consistency check of our result.
\end{remark}
\begin{remark}
We now list special cases of $\kappa_{N}^{m,k}$ for general $N,m$ and special values of $k$.
\begin{enumerate}
\item For $k=1$, we have $q_N^{m,1}(z;a)=1$, such that $p_N^{m,1}(z;a)$ is the degree $m-1$ Taylor polynomial of $f_N^{m,1}(z;a)$ around $-a$. We then have
\begin{align*}
\frac{F_N^{m,2}(a)}{F_N^{m,1}(a)}=\kappa_N^{m,1}(a)=(1+a^{2})^{N-m+1} \sum_{v=0}^{m-1} \sum_{j=0}^{v} (-1)^{1+m+v} \binom{m}{j}\binom{N-m+1}{v-j} \bigg(\frac{a^2}{1+a^2} \bigg)^{v-j}
\end{align*}
after a straightforward computation.
\item For $k={m}$, we have
$f{_N^{m,m}(z;a)}=(1-az)^{N-m+1}$. The Padé approximant is then equal to the inverse of the Taylor approximant of $\frac{1}{(1-az)^{N-m+1}}$ around $-a$ of degree $m-1$, and 
\begin{align*}
\frac{F_N^{m,m+1}(a)}{F_N^{m,m}(a)} =\kappa_N^{m,m}(a) =\frac{(1+a^{2})^{N-m+1}}{\sum_{j=0}^{m-1} \binom{N-m+j}{j}\big(\frac{a^2}{1+a^2}\big)^{j}}.
\end{align*}
\end{enumerate}
\end{remark}

\begin{figure}
\begin{center}
\begin{tikzpicture}[master]
\node at (0,0) {\includegraphics[width=6cm]{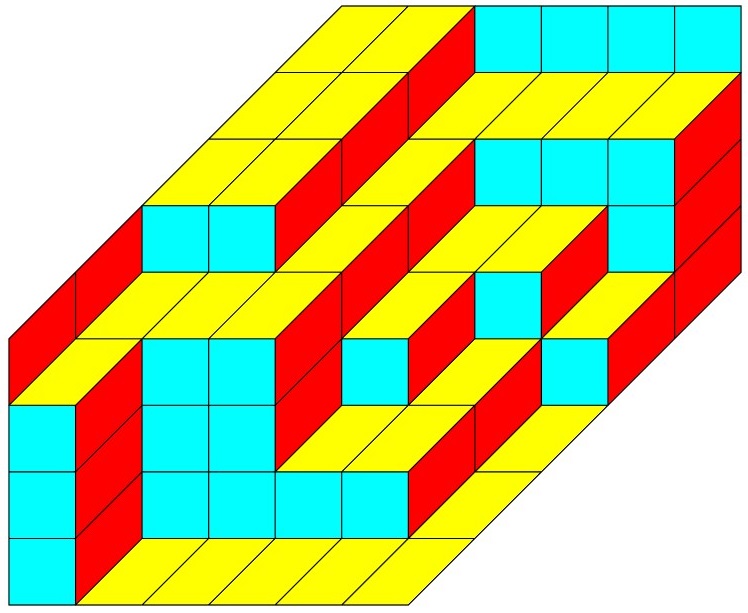}};
\node at (-3,-2.65) {\footnotesize $(0,0)$};
\draw[black,fill] (-2.935,-2.415) circle (0.6mm);
\node at (0.2,-2.65) {\footnotesize $(L-M,0)$};
\draw[black,fill] (0.29,-2.415) circle (0.6mm);
\node at (3.2,0) {\footnotesize $(L,M)$};
\draw[black,fill] (2.95,0.25) circle (0.6mm);
\node at (2.9,2.65) {\footnotesize $(L,M+N)$};
\draw[black,fill] (2.95,2.4) circle (0.6mm);
\node at (-0.2,2.65) {\footnotesize $(M,M+N)$};
\draw[black,fill] (-0.27,2.4) circle (0.6mm);
\node at (-3,0) {\footnotesize $(0,N)$};
\draw[black,fill] (-2.935,-0.28) circle (0.6mm);
\end{tikzpicture} \hspace{0.4cm}
\begin{tikzpicture}[slave]
\node at (0,0) {\includegraphics[width=6cm]{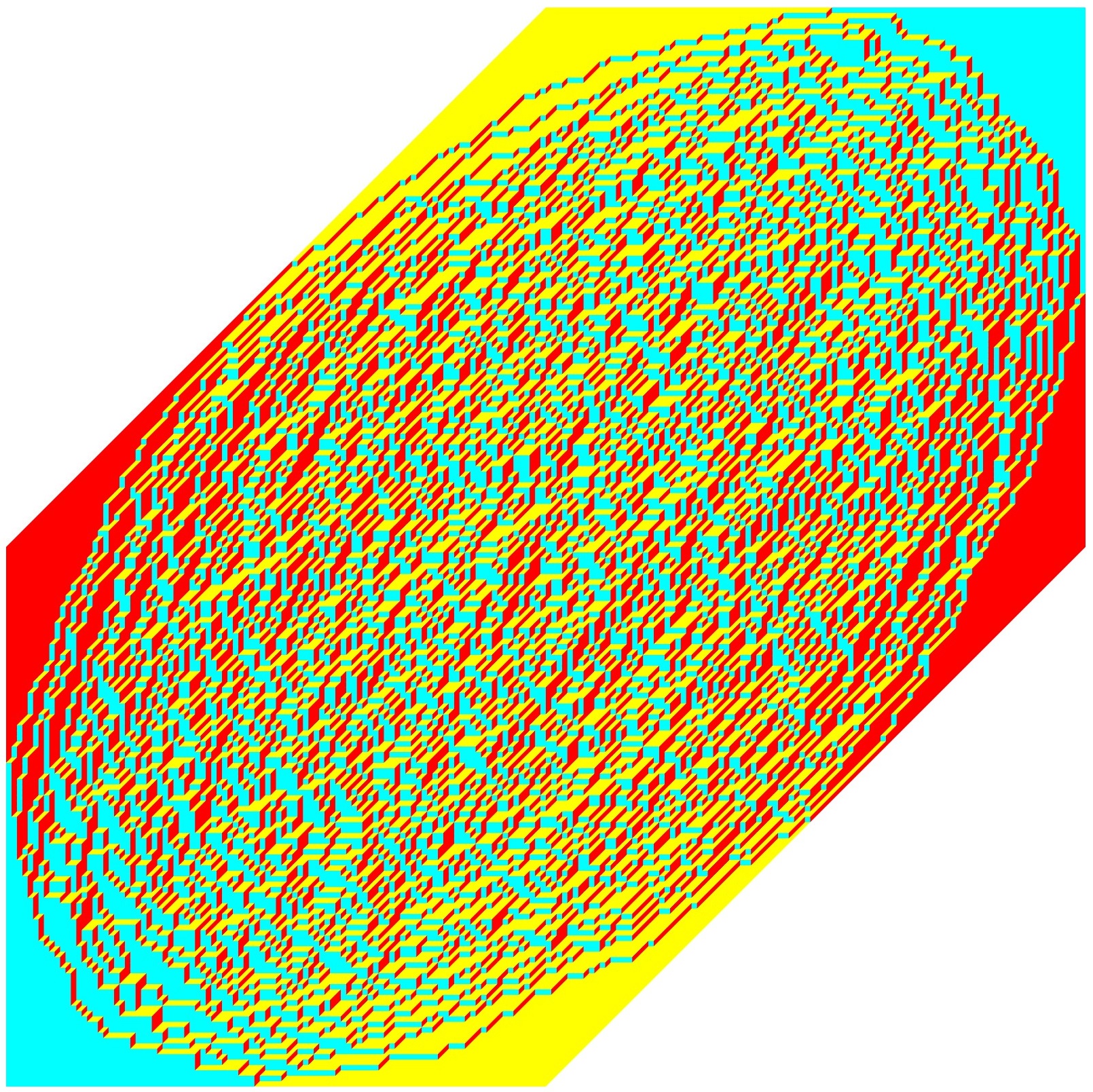}};
\end{tikzpicture}
\end{center}
\caption{\label{fig:two types of hexagons}
Left: a tiling of $H_{L,M,N}$ with $L=11$, $M=5$ and $N=4$. Right: a tiling of $H_{200,100,100}$ chosen uniformly at random.}
\end{figure}

\subsection{Lozenge tilings of reduced hexagons}
\paragraph{Lozenge tilings of hexagons.}
Given $L,M,N\in\mathbb N_{>0}$ with $M<L$, we consider the hexagon $H_{L,M,N}$ in the plane with corners $(0,0), (L-M,0), (L,M), (L,M+N), (M,M+N), (0,N)$ as shown in Figure \ref{fig:two types of hexagons} (left). We consider three types of lozenges {(\tikz[scale=.25,baseline={([yshift=-0.5ex]current bounding box.center)}]{\draw (0,0)--(0,1)--(1,2)--(1,1)--(0,0);}, \tikz[scale=.25,baseline={([yshift=-0.5ex]current bounding box.center)}]{ \draw (0,0)--(0,1)--(1,1)--(1,0)--(0,0); }, \tikz[scale=.25,baseline={([yshift=-0.5ex]current bounding box.center)}]{\draw (0,0)--(1,1)--(2,1)--(1,0)--(0,0);})} and consider all possible tilings of $H_{L,M,N}$ by {these} lozenges\footnote{There are two natural ways of representing a tiling of the hexagon: (a) if the hexagon has all inner angles equal to 120 degrees, then the corresponding lozenges are \tikz[scale=.2,baseline={([yshift=-0.5ex]current bounding box.center)}]{ \draw[cm={1,0,0,1,(2.2,0)}] (0,0)--(0,1)--(0.866025,1.5)--(0.866025,0.5)--(0,0); }, \tikz[scale=.2,baseline={([yshift=-0.5ex]current bounding box.center)}] { \draw[cm={1,0,0,1,(2.2,0)}] (0,0)--(0,1)--(0.866025,0.5)--(0.866025,-0.5)--(0,0);}, \tikz[scale=.2,baseline={([yshift=-0.5ex]current bounding box.center)}] {  \draw[cm={1,0,0,1,(3,0)}] (0,0)--(0.866025,0.5)--(1.73205,0)--(0.866025,-0.5)--(0,0);}, while (b) if the hexagon has inner angles of 135, 90, and 135 degrees, then the corresponding parallelograms are \tikz[scale=.2,baseline={([yshift=-0.5ex]current bounding box.center)}]{\draw (0,0)--(0,1)--(1,2)--(1,1)--(0,0);}, \tikz[scale=.2,baseline={([yshift=-0.5ex]current bounding box.center)}]{ \draw (0,0)--(0,1)--(1,1)--(1,0)--(0,0); }, \tikz[scale=.2,baseline={([yshift=-0.5ex]current bounding box.center)}]{\draw (0,0)--(1,1)--(2,1)--(1,0)--(0,0);}. In this paper, we will only use option (b), as it allows for a simpler coordinate system. We will also refer to \tikz[scale=.2,baseline={([yshift=-0.5ex]current bounding box.center)}]{\draw (0,0)--(0,1)--(1,2)--(1,1)--(0,0);}, \tikz[scale=.2,baseline={([yshift=-0.5ex]current bounding box.center)}]{ \draw (0,0)--(0,1)--(1,1)--(1,0)--(0,0); }, \tikz[scale=.2,baseline={([yshift=-0.5ex]current bounding box.center)}]{\draw (0,0)--(1,1)--(2,1)--(1,0)--(0,0);} as ``lozenges" to match with existing literature, and also because they correspond to true lozenges in the coordinate system (a).}.
The number of such tilings is given by MacMahon's formula \cite{MacMahon} and equal to 
\begin{equation}\label{def:GN}
G_{L,M,N}=\prod_{i=1}^{L-M}\prod_{j=1}^M\prod_{k=1}^N\frac{i+j+k-1}{i+j+k-2}.
\end{equation}
Similarly as for domino tilings of the Aztec diamond, one observes a frozen and a rough region in uniformly random lozenge tilings of hexagons. For instance, in the regular hexagon with $N=M=L-M$, the frozen and rough regions are separated approximately for large $N$ by the ellipse inscribed in the hexagon, see Figure \ref{fig:two types of hexagons} (right).  
Exact expressions for the number of lozenge tilings of various domains close to hexagons, like hexagons with dents, have been obtained e.g. in \cite{ByunLai, Ciucu1, Ciucu2, Ciucu3, Ciucu4}.
Lozenge tiling of polygons have been widely studied, see e.g.\ \cite{Baiketal, Jptrf, Gorin, Petrov1, Petrov2, AvMJ, Amol2023, HYZ2024} for asymptotic results on the uniform measure, and \cite{DK2021, CDKL2020, C doubly, C MVOP, GK2021, Pior2024, K2024} for periodic weightings on the hexagon. We also refer to \cite{GorinSurvey} for a recent survey.

\paragraph{Lozenge tilings of reduced hexagons and Hermite-Padé approximation.}
We will remove a parallelogram from a corner of the hexagon, which will affect the number of lozenge tilings of the domain and, if the parallelogram is sufficiently large, will also affect the asymptotic boundary curve.
For $r\in\{1,\dots, L-1\}$ and $\max\{N,N-L+M+r\}\leq k\leq \min\{M+N-1,r+N-1\}$, we define $H_{L,M,N}^{r,k}$ to be the intersection of the hexagon $H_{L,M,N}$ with the region $\{(x,y)\in\mathbb R^2: y\leq \max\{k, x+k-r\}\}$, see Figure \ref{fig:hexagons grey regions examples}. We denote $G_{L,M,N}^{r,k}$ for the number of lozenge tilings of this domain.

\begin{figure}
\begin{center}
\begin{tikzpicture}[master]
\node at (0,0) {\includegraphics[width=6cm]{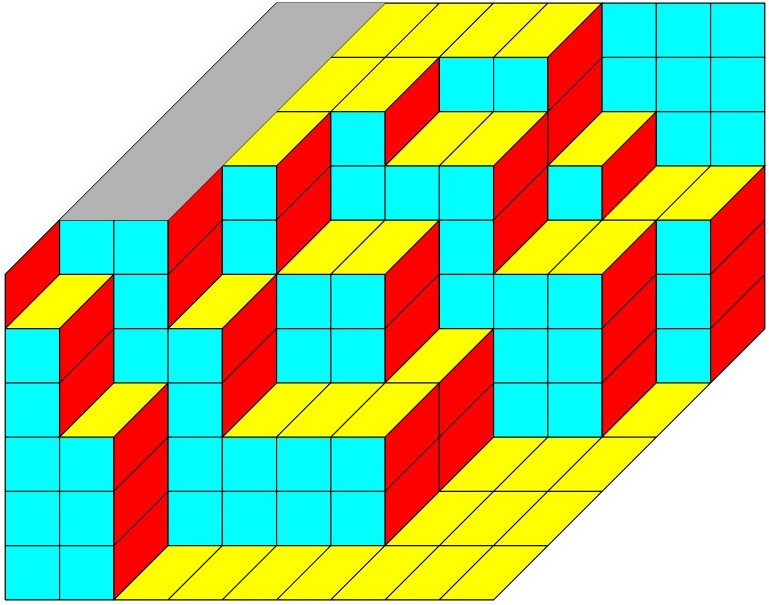}};

\node at (-3,-2.6) {\footnotesize $(0,0)$};
\draw[black,fill] (-2.96,-2.33) circle (0.6mm);
\node at (0.7,-2.6) {\footnotesize $(L-M,0)$};
\draw[black,fill] (0.825,-2.33) circle (0.6mm);
\node at (3.2,-0.5) {\footnotesize $(L,M)$};
\draw[black,fill] (2.96,-0.23) circle (0.6mm);
\node at (2.9,2.55) {\footnotesize $(L,M+N)$};
\draw[black,fill] (2.96,2.35) circle (0.6mm);
\node at (-3.4,0.25) {\footnotesize $(0,N)$};
\draw[black,fill] (-2.96,0.25) circle (0.6mm);

\node at (-1.7,0.85) {\footnotesize $(r,k)$};
\draw[black,fill] (-1.7,0.65) circle (0.6mm);
\node at (-0,2.55) {\footnotesize $(M+N-k+r,M+N)$};
\draw[black,fill] (-0.01,2.35) circle (0.6mm);
\node at (-2.8,0.85) {\footnotesize $(k-N,k)$};
\draw[black,fill] (-2.54,0.65) circle (0.6mm);
\end{tikzpicture} \hspace{-0.3cm}
\begin{tikzpicture}[slave]
\node at (0,0) {\includegraphics[width=6cm]{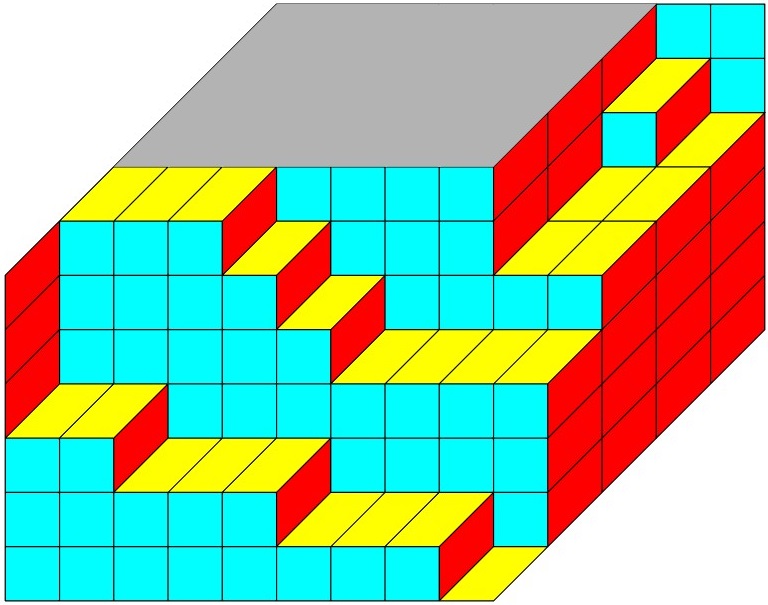}};
\end{tikzpicture}
\end{center}
\caption{\label{fig:hexagons grey regions examples}
Left: a tiling of $H_{L,M,N}^{r,k}$ with $L=14$, $M=5$, $N=6$, $r=3$ and $k=7$. Right: a tiling of $H_{L,M,N}^{r,k}$ with $L=14$, $M=5$, $N=6$, $r=9$ and $k=8$.}
\end{figure}

To state our result, we need the following {\em Hermite-Pad\'e approximation} problem.
We search for a polynomial $q_{M-1}$ of degree at most $M-1$, a polynomial $p_{N-1+r-k}$ of degree at most $N-1+r-k$, and a monic polynomial $P_{L-M-N+k-r}$ of degree $L-M-N+k-r$ which satisfy the conditions
\begin{align}\label{Hermite Padé hexagon}
q_{M-1}(z)-z^{M+N}P_{L-M-N+k-r}(z)+(1+z)^{L-r}z^kp_{N-1+r-k}(z)=\bigO((z+1)^L)\qquad\mbox{as $z\to -1$.}
\end{align}
This is a type I Hermite-Pad\'e approximation problem, see e.g.\ \cite{VanAssche}. In practice it is equivalent to a linear system of $L$ equations for the $M+(L-M-N+k-r)+(N+r-k)=L$ coefficients of the polynomials. We will see that this system has a unique solution. Our second main result, Theorem \ref{theorem:hextiling}, establishes that $p_{N-1+r-k}(0)$ is equal to the ratio $G_{L,M,N}^{r,k+1}/G_{L,M,N}^{r,k}$.

\begin{theorem}\label{theorem:hextiling}
Let $L,M,N\in\mathbb N_{>0}$ with $M<L$, and let $r\in\{1,\dots, L-1\}$, $\max\{N,N-L+M+r\}\leq k\leq \min\{M+N-1,r+N-1\}$.
We have the identity
\[G_{L,M,N}^{r,k}= G_{L,M,N} \prod_{j=k}^{\min\{M+N-1,r+N-1\}}\frac{1}{p_{N-1+r-j}(0)}.\]
\end{theorem}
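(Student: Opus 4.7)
The plan is to adapt to the hexagon setting the same three-step strategy used for Theorem~\ref{theorem:tilings}: realize the tiling counts as gap probabilities in a discrete determinantal point process, identify the one-step ratio $G_{L,M,N}^{r,k+1}/G_{L,M,N}^{r,k}$ with a distinguished entry of the solution of a Riemann-Hilbert problem, solve that RHP in closed form in terms of the Hermite-Pad\'e triple $(q_{M-1},P_{L-M-N+k-r},p_{N-1+r-k})$, and finally telescope.

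First, using the standard Lindstr\"om-Gessel-Viennot bijection, lozenge tilings of $H_{L,M,N}$ are in bijection with families of non-intersecting lattice paths, and the positions of (say) horizontal lozenges form a discrete determinantal point process whose kernel has the double contour integral form covered by the general framework of this paper. Passing from $H_{L,M,N}$ to the reduced domain $H_{L,M,N}^{r,k}$ amounts to forcing the lozenges in the upper-left corner block, which in the point process is the gap event that no particle occupies a prescribed finite set $A_{r,k}$. Hence $G_{L,M,N}^{r,k}/G_{L,M,N}=\det(I-K|_{A_{r,k}})$, and because $A_{r,k+1}$ differs from $A_{r,k}$ by a single lattice row, the ratio $G_{L,M,N}^{r,k+1}/G_{L,M,N}^{r,k}$ is a one-step ratio of Fredholm determinants. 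After the Fourier-series simplification of the kernel, the general theory identifies this ratio with the value at $z=0$ of a specific entry of the solution $Y$ of the associated RHP.

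The key analytic step is then to solve that RHP explicitly. The two-direction boundary of the excised parallelogram (horizontal of length $\sim r$ and diagonal of length $\sim M+N-k$) produces a $3\times 3$ RHP whose jump records the two weight functions $z^{M+N}$ and $(1+z)^{L-r}z^k$ together with their combined vanishing of order $L$ at $z=-1$. One then verifies that the first row of $Y$ is precisely $(q_{M-1},P_{L-M-N+k-r},p_{N-1+r-k})$, with the degree bounds dictated by the asymptotics of $Y$ at $0$ and $\infty$; existence and uniqueness of the Hermite-Pad\'e triple follow from non-degeneracy of the associated $L\times L$ linear system, which can be justified a posteriori from positivity of $G_{L,M,N}^{r,k}$. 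Evaluating the relevant entry at $z=0$ yields $G_{L,M,N}^{r,k+1}/G_{L,M,N}^{r,k}=p_{N-1+r-k}(0)$, and telescoping from the base $k=\min\{M+N-1,r+N-1\}+1$ (where the removed region is void and $G_{L,M,N}^{r,k}=G_{L,M,N}$) produces the claimed identity. The main obstacle is this last step: compared with the Aztec case, the bi-directional geometry of the excised parallelogram forces the approximation problem to be genuinely Type~I Hermite-Pad\'e rather than classical Pad\'e, so the RHP jumps from $2\times 2$ to $3\times 3$. Carefully tracking the degree and vanishing-order bookkeeping through this dimension increase, and fixing the normalization so that the extracted RH entry equals $p_{N-1+r-k}(0)$ exactly (and not some nontrivial constant multiple thereof), is the most delicate part of the argument.
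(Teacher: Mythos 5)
Your overall strategy is the same as the paper's: Proposition \ref{prop:hexagonFredholm} gives $G_{L,M,N}^{r,k}=G_{L,M,N}\det(1-1_{\Z\cap[k,\infty)}K_{L,M,N,r})$, Theorem \ref{thm:multi gap ratio} identifies the one-step ratio with $U_{11}(0)$ for a $3\times3$ RH problem (note the gap set is the \emph{semi-infinite} cluster $\{k,k+1,\dots\}$, not a finite set --- that is exactly what keeps the RH problem $3\times3$ rather than $4\times4$), the RH problem is solved via type I Hermite--Pad\'e approximation, and one telescopes from $k=\min\{M+N-1,r+N-1\}+1$ where the removed region is empty. So the architecture is right.

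There is, however, one genuine missing idea in the step you yourself flag as delicate. The $3\times3$ RH problem that the general framework produces has jump entries built from $g_1,g_2,h_1,h_2$ in \eqref{def:ghex}--\eqref{def:hhex}, i.e.\ they contain the Jacobi-polynomial entries $Y_{11},Y_{21}$ of the auxiliary $2\times2$ RH solution $Y$. As it stands, the analytic continuations of the first row of $U$ are \emph{not} polynomials, and your claim that ``the first row of $Y$ is precisely $(q_{M-1},P_{L-M-N+k-r},p_{N-1+r-k})$'' cannot be verified directly. The paper's key manoeuvre is to take $\Sigma_1=\Sigma_2=\gamma$ and factor the jump as $J_U=\tilde Y J\tilde Y^{-1}$ with $\tilde Y$ assembled from $Y$, so that $R=U\tilde Y$ has purely elementary jumps $J\tilde J_Y$; only then do the continued entries $R_1,R_2,R_3$ become (after clearing powers of $z$) the polynomials $\tilde P_k,\tilde p_{N-1},q_{M-1}$, and the Hermite--Pad\'e pair $p_{N-1+r-k}$, $P_{L-M-N+k-r}$ arises from the explicit combinations $p_{N-1+r-k}=z^{-k}\tilde P_k-(1+z)^rz^{-k}\tilde p_{N-1}$ and $P_{L-M-N+k-r}=z^{-M-N}q_{M-1}+(1+z)^{L-r}z^{-M-N}\tilde P_k$ rather than appearing verbatim as a row. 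The final evaluation $U_{11}(0)=\lim_{z\to0}(R_1(z)-(1+z)^rz^{-k}R_2(z))=p_{N-1+r-k}(0)$ also uses this conjugation. Without some device of this kind your sketch stalls at the ``verification'' step; with it, the rest of your bookkeeping (degree bounds from the behaviour at $0$ and $\infty$, a posteriori unique solvability of the $L\times L$ system from positivity of the gap probability) goes through as in the paper.
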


\subsection{General methodology}

Theorem \ref{theorem:tilings} and Theorem \ref{theorem:hextiling} are applications of a general method that we develop. This method consists of characterizing Fredholm determinants on $\ell^2(\mathbb Z)$ of kernels with a specific double contour integral form in terms of a Riemann-Hilbert (RH) problem. 
A simple example of such kernels arises in the Krawtchouk ensemble which characterizes statistics of the uniform measure on the domino tilings of Aztec diamonds. In this case, the associated RH problem can be solved in terms of the Padé approximation problem \eqref{def of p and q in intro}.
A slightly more complicated example of such a kernel arises for uniformly distributed lozenge tilings of hexagons. In this case, additional steps are needed before one can solve the RH problem explicitly, but in the end it is equivalent to the type I Hermite-Padé approximation problem \eqref{Hermite Padé hexagon}.

In this sense, Theorem \ref{theorem:tilings} and Theorem \ref{theorem:hextiling} serve as simple illustrations of the power of our method, which also applies to other discrete determinantal point processes, like doubly-periodic probability distributions on domino/lozenge tilings, and which we expect also to be useful for asymptotic analysis.

We will now give an informal description of our method, but specialized to the case of domino tilings of the Aztec diamond in order to keep this introduction as accessible as possible.
The proof of Theorem \ref{theorem:tilings} consists of the following steps.
\paragraph{Step 1: Fredholm determinant representation.} First, we express $F_{N}^{m,k}(a)$ as a Fredholm determinant of an operator acting on $\ell^2(\mathbb Z)$.
This step is rather standard, as it relies on well-known results \cite{Jptrf} connecting domino tilings with non-intersecting paths and determinantal point processes.
It will consist of proving the following result.
\begin{proposition}\label{prop:dominoFredholm}
{For} $N\in \N_{>0}$, $m\in\{1,\ldots, N\}$, $k\in\{1,\ldots, m+1\}$, we have
\be\label{eq:detid}
F_N^{m,k}(a)
=F_N(a)\ \det\left(1-1_{\Z\cap [k,+\infty)}K_{N,m}\right)_{\ell^2(\mathbb Z)},\ee
where
$K_{N,m}$ is given by
\begin{align*}
K_{N,m}\left(n,n'\right)=
\frac{1}{(2\pi i)^2}\int_{\Sigma_2}d u \int_{\Sigma_1} d v \frac{u^{-n-1}v^{n'}}{u-v}\frac{W(v)}{W(u)},\qquad n,n'\in\mathbb Z,
\end{align*}
where $\Sigma_1$ is a simple closed contour around $a$ but not enclosing $-1/a$, and $\Sigma_2$ is a simple closed contour around $\Sigma_1$ and $0$, both oriented positively, and
\be\label{def:W}W(z)=\frac{z^{N-m}}{(z-a)^{N-m+1}(1+az)^m}.\ee
\end{proposition}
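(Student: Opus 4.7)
The plan is to prove Proposition \ref{prop:dominoFredholm} by translating the weighted tiling problem into a non-intersecting lattice path problem and then invoking the standard machinery of determinantal point processes, as developed in \cite{Jptrf}.

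First, I would use the standard bijection between domino tilings of $A_N$ and families of non-intersecting lattice paths (for instance the ``DR'' path encoding tracking shared edges between adjacent dominoes). Under this bijection, the weight $a^{v(T)}$ factorizes as a product of step weights along the paths, and the full partition function $F_N(a)$ becomes a Lindström--Gessel--Viennot determinant. Johansson's formalism \cite{Jptrf} identifies the positions of the paths on any fixed time slice as a determinantal point process on $\mathbb Z$, with a correlation kernel given by a double contour integral obtained from the Eynard--Mehta theorem together with a biorthogonalization argument. After choosing the slice corresponding to horizontal height $m$ of the Aztec diamond, the resulting kernel should match $K_{N,m}$: the exponents $N-m+1$ and $m$ in \eqref{def:W} are the numbers of path steps on either side of the slice, and the factor $z^{N-m}$ normalizes the contour weights in the appropriate way.

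Next, I would identify the geometric reduction $A_N^{m,k}$ with a gap event for this point process. The removed region $A_N\setminus A_N^{m,k}$ consists of $N+1-m$ diagonal rows of $m-k+1$ horizontal cells, which in any tiling of $A_N$ is forced to be tiled exclusively by north dominoes (this is the same ``forced tiling'' phenomenon highlighted for $k=1$ in Figure \ref{fig:mirror frozen}). Under the path encoding, this forced filling corresponds precisely to the absence of path particles at positions $n\geq k$ on the slice of height $m$, and conversely any tiling of $A_N^{m,k}$ extends uniquely to a tiling of $A_N$ by placing north dominoes on the removed region. Since north dominoes carry weight $a^0=1$, this extension preserves $a^{v(T)}$, so $F_N^{m,k}(a)$ equals the $a^{v(T)}$-weighted sum over tilings of $A_N$ whose associated particle configuration on the slice at height $m$ is disjoint from $\mathbb Z\cap[k,\infty)$.

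Dividing by $F_N(a)$ converts this into a gap probability under the $a$-biased determinantal measure with kernel $K_{N,m}$, which by the standard identity equals $\det(1-1_{\mathbb Z\cap[k,\infty)}K_{N,m})_{\ell^2(\mathbb Z)}$, proving \eqref{eq:detid}. The main obstacle is bookkeeping rather than conceptual content: one must verify that the contour specifications ($\Sigma_1$ around $a$ only, $\Sigma_2$ enclosing both $\Sigma_1$ and $0$) and the exponents in $W(z)$ agree with the kernel produced by the biorthogonalization, and that the orientation chosen in the path encoding is consistent with the index $n$ labelling the slice. A useful sanity check is the boundary case $k=m+1$: the gap set then lies beyond the support of the particle configuration at the slice of height $m$, so the Fredholm determinant reduces to $1$ and one recovers $F_N^{m,m+1}(a)=F_N(a)$ as required.
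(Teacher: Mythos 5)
Your proposal follows essentially the same route as the paper: encode tilings as Johansson's non-intersecting paths, quote the double-contour-integral kernel from \cite{Joh2}/\cite{Jptrf} at the slice $r=2m-1$, and identify tilings of $A_N^{m,k}$ with tilings of $A_N$ whose particle configuration at that slice avoids $\mathbb Z\cap[k,\infty)$, so that dividing by $F_N(a)$ turns the weighted count into a gap probability. The only slip is a phrasing one: the removed region is not forced to consist of north dominoes in an \emph{arbitrary} tiling of $A_N$, only in those tilings that restrict to tilings of $A_N^{m,k}$ (equivalently, on the gap event) --- your surrounding sentences make clear this is what you meant, and the remaining bookkeeping you flag (the change of variables $u\mapsto 1/u$, $v\mapsto 1/v$ turning Johansson's weight $\widetilde W$ into $W$) is precisely what the paper carries out.
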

We will prove this result in Section \ref{section:paths}.
We should note that one can rewrite the determinant on the right hand side of \eqref{eq:detid} as the determinant of an $N\times N$ Hankel matrix associated with a truncation of a (discrete) Krawtchouk weight. This identity was exploited by Colomo and Pronko \cite{CP2013, CP2015} to analyze the free energy for tilings of reduced Aztec diamonds. Such Hankel determinants are connected to discrete RH problems, see e.g.\ \cite{Baiketal}, but these discrete RH problems are of a much more complicated form than the ones we will obtain.

\paragraph{Step 2: Fourier series conjugation.} Next, we use the Fourier series transform and properties of Fredholm determinants to rewrite $F_{N}^{m,k}(a)$ as a
Fredholm determinant of an operator $\mathcal{M}$ acting on $L^2(\Sigma_1\cup\Sigma_2)$ which is of a simpler, \textit{integrable}, form; here $\Sigma_1,\Sigma_2$ will be two disjoint simple closed contours in the complex plane. An operator $\mathcal{M}$ is $n$-integrable in the sense of Its, Izergin, Korepin and Slavnov \cite{IIKS} if its kernel $M$ can be written in the form
\begin{align}\label{def of integrable kernel}
M(z,z') = \frac{\mathrm{g}^{T}(z)\mathrm{h}(z')}{z-z'}, \qquad \mathrm{g}^{T}(z)\mathrm{h}(z)=0,
\end{align}
where $\mathrm{g},\mathrm{h}$ are vector-valued functions of size $n\times 1$.
This part is novel; it can be seen as the discrete counterpart of a method developed in \cite{BertolaCafasso} to simplify the analysis of certain types of Fredholm determinants by conjugating the corresponding operator with the Fourier transform. This method allows to study gap probabilities of infinite determinantal point processes arising in random matrix theory, see e.g.\ \cite{Girotti1,Girotti2,Girotti3, CGS2019}. Recently, a similar method was used for finite determinantal point processes in \cite{ClaeysMauersberger}. Here, we also use it for finite determinantal point processes, but on a discrete lattice. It is the latter feature that naturally leads us to use  the Fourier series transform instead of the Fourier transform.

\begin{proposition}\label{prop:dominointegrable}
For $N\in \N_{>0}$, $m\in\{1,\ldots, N\}$, $k\in\{1,\ldots, m+1\}$, we have
\[\det\left(1-1_{\Z\cap[k,+\infty)}K_{N,m}\right)_{\ell^2(\mathbb Z)}=\det\left(1-\mathcal M\right)_{L^2(\Sigma_1)\oplus L^2(\Sigma_2)},\]
where the integral operator $\mathcal M:L^2(\Sigma_1)\oplus L^2(\Sigma_2)\to L^2(\Sigma_1)\oplus L^2(\Sigma_2)$ has kernel
\begin{align*}
M(z,z')=\frac{1_{\Sigma_2}(z')1_{\Sigma_1}(z) ( \frac{z}{z'} )^{k} - 1_{\Sigma_1}(z')1_{\Sigma_2}(z)\frac{W(z')}{W(z)}}{2\pi i (z-z')}.
\end{align*}
\end{proposition}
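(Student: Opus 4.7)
The strategy is to factor the restriction of $K_{N,m}$ to $\ell^2(\mathbb Z\cap[k,+\infty))$ as a composition of three operators naturally suggested by the double contour integral form of its kernel, and then to use the cyclic identity $\det(1-XY)=\det(1-YX)$ for Fredholm determinants, together with a Schur complement identity for block anti-diagonal operators, to transform it into $\det(1-\mathcal M)$.

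Concretely, I would introduce the Fourier-series-type operators
\begin{align*}
\mathcal U\colon\ell^2(\mathbb Z\cap[k,+\infty))\to L^2(\Sigma_1),\quad &(\mathcal U f)(v)=\sum_{n\geq k}v^n f(n),\\
\mathcal V\colon L^2(\Sigma_2)\to\ell^2(\mathbb Z\cap[k,+\infty)),\quad &(\mathcal V g)(n)=\frac{1}{2\pi i}\int_{\Sigma_2}\frac{g(u)}{u^{n+1}}\,du,
\end{align*}
together with the Cauchy-type operator $L\colon L^2(\Sigma_1)\to L^2(\Sigma_2)$ with kernel $L(u,v)=\frac{W(v)/W(u)}{2\pi i(u-v)}$. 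Substituting $\mathcal U f$ into $L\mathcal U f$, then into $\mathcal V L\mathcal U f$, and interchanging the order of sum and integrals yields
\[
\bigl(\mathcal V L\mathcal U\bigr)(n,n')=\frac{1}{(2\pi i)^2}\int_{\Sigma_2}\!du\int_{\Sigma_1}\!dv\,\frac{u^{-n-1}v^{n'}}{u-v}\frac{W(v)}{W(u)}=K_{N,m}(n,n'),
\]
so that $\mathcal V L\mathcal U=K_{N,m}|_{\ell^2(\mathbb Z\cap[k,+\infty))}$.

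Next, I would apply the cyclic identity of Fredholm determinants twice to obtain
\[\det\bigl(1-\mathcal VL\mathcal U\bigr)_{\ell^2}=\det\bigl(1-L\mathcal U\mathcal V\bigr)_{L^2(\Sigma_2)}=\det\bigl(1-\mathcal U\mathcal VL\bigr)_{L^2(\Sigma_1)}.\]
The key computation is the kernel of $\mathcal U\mathcal V\colon L^2(\Sigma_2)\to L^2(\Sigma_1)$: since $\Sigma_2$ encloses $\Sigma_1$, one has $|v|<|u|$ for $v\in\Sigma_1$ and $u\in\Sigma_2$, and the geometric series identity $\sum_{n\geq k}v^n u^{-n-1}=\frac{(v/u)^k}{u-v}$ converges uniformly, yielding the closed form $(\mathcal U\mathcal V)(v,u)=\frac{(v/u)^k}{2\pi i(u-v)}$.

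Finally, I would observe that $\mathcal M$ has a block anti-diagonal structure with respect to the decomposition $L^2(\Sigma_1)\oplus L^2(\Sigma_2)$: writing $M_{12}\colon L^2(\Sigma_2)\to L^2(\Sigma_1)$ with kernel $\frac{(z/z')^k}{2\pi i(z-z')}$ and $M_{21}\colon L^2(\Sigma_1)\to L^2(\Sigma_2)$ with kernel $-\frac{W(z')/W(z)}{2\pi i(z-z')}$, the Schur complement identity gives $\det(1-\mathcal M)=\det(1-M_{12}M_{21})_{L^2(\Sigma_1)}$. A direct comparison of kernels shows $M_{12}=-\mathcal U\mathcal V$ and $M_{21}=-L$, so the two minus signs cancel and $M_{12}M_{21}=\mathcal U\mathcal VL$, which closes the chain of identities. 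The main technical point I expect to verify carefully is that each of the compositions $\mathcal VL\mathcal U$, $L\mathcal U\mathcal V$, $\mathcal U\mathcal VL$ and $M_{12}M_{21}$ is trace class on its space, so that the cyclic and Schur complement identities are applicable; this should follow from analyticity of $W$ away from $a,-1/a$ and a choice of contours $\Sigma_1,\Sigma_2$ of distinct radii (so that the ratio $\sup_{\Sigma_1}|v|/\inf_{\Sigma_2}|u|<1$ controls the Fourier tails).
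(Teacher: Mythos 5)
Your proposal is correct and follows essentially the same route as the paper: your operators $\mathcal U\mathcal V$ and $L$ coincide (up to sign) with the paper's $\mathcal Q$ and $\mathcal R$, your use of the cyclic identity plays the role of the paper's conjugation by the Fourier series transform $\mathcal F$ in Proposition \ref{prop:Fourier}, the geometric-series evaluation of $\mathcal U\mathcal V$ is the same computation as the sum $\sum_{n\in\mathcal I_j}u^{-n-1}z^n$ there, and your Schur-complement step is the block-triangular factorization carried out in the proof of Proposition \ref{prop:integrable}. The one point to sharpen in your trace-class bookkeeping is that, for the factorization behind $\det(1-\mathcal M)=\det(1-M_{12}M_{21})$ to be justified by multiplicativity of Fredholm determinants, you need $M_{12}$ and $M_{21}$ to be trace class \emph{individually}, not merely the compositions you list; the paper handles exactly this by rewriting the Cauchy factor $\frac{1}{z-u}$ as a contour integral over an intermediate curve $\tilde C$ between $\Sigma_1$ and $\Sigma_2$, which exhibits each block as a finite sum of products of two Hilbert--Schmidt operators.
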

We will prove this result in Section \ref{section:Fourier}; in fact, we will prove a more general result in which we replace $[k,+\infty)$ by a finite union of intervals, and in which we take a larger class of kernels $K$. 
We present this method and result in a much larger setting, because it can be used for other discrete determinantal point processes, in particular doubly-periodic domino tilings of Aztec diamonds as well as doubly-periodic lozenge tilings of hexagons (when restricted to a single level, so that the point process lies in a subset of $\Z$).
We study a class of discrete determinantal point processes whose kernel admits a double contour integral representation of the form
\begin{align}\label{eq:def K intro}
K\left(n,n'\right)=
\frac{1}{(2\pi i)^2}\int_{\Sigma_2}d u \int_{\Sigma_1}d v \frac{u^{-n-1}v^{n'}G(u,v)}{u-v},\qquad n,n'\in\mathbb Z,
\end{align}
where $\Sigma_1$ is the unit circle in the complex plane and $\Sigma_2$ is a simple closed contour outside the unit disk. The function $G(u,v)$ takes the form 
\begin{align*}
G(u,v)=\sum_{j=1}^{d} g_j(u)h_j(v), \qquad d\geq 1,
\end{align*}
for some functions $\{g_j, h_j\}_{j=1}^{d}$.

\paragraph{Step 3: RH characterization.}
Then we express $F_{N}^{m,k+1}(a)/F_{N}^{m,k}(a)$ in terms of the solution $U$ to the following RH problem.
\subsubsection*{RH problem for $U$}
\begin{itemize}
\item[(a)] $U:\mathbb C\setminus\left(\Sigma_1\cup\Sigma_2\right)\to \mathbb C^{2\times 2}$ is analytic, where $\Sigma_1$ is a simple positively oriented closed contour around $a$ but not enclosing $-1/a$, and $\Sigma_2$ is a simple positively oriented contour encircling $\Sigma_1$ and $0$.
\item[(b)] $U$ admits continuous boundary values $U_\pm(z)$ as $z\in\Sigma_1\cup\Sigma_2$ is approached from the left ($+$) or right ($-$) according to the orientation of the contour, and they are related by
$U_+(s)=U_-(z)J_U(z)$ for $z\in \Sigma_1\cup\Sigma_2$, with
\begin{align*}
J_U(z)=\begin{cases}
\begin{pmatrix}
1& \frac{z^{N-m+k}}{(z-a)^{N-m+1}(1+az)^m}\\ 0& 1
\end{pmatrix},&z\in\Sigma_1,\\
\begin{pmatrix}
1& 0\\ -\frac{(z-a)^{N-m+1}(1+az)^m}{z^{N-m+k}}
& 1
\end{pmatrix},&z\in\Sigma_2.\end{cases}
\end{align*}\item[(c)]{As $z\to \infty$, $U(z)=I_{2}+\bigO(z^{-1})$}.
\end{itemize}

\begin{proposition}\label{prop:dominoRH}
For $N\in \N_{>0}$, $m\in\{1,\ldots, N\}$, $k\in\{1,\ldots, m+1\}$, the RH problem for $U$ is uniquely solvable, and we have
\begin{align*}
\frac{\det\left(1-1_{\Z\cap [k+1,+\infty)}K_{N,m}\right)}{\det\left(1-1_{\Z\cap [k,+\infty)}K_{N,m}\right)}=U_{11}(0).
\end{align*}
\end{proposition}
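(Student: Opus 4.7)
The plan is to recognize the operator $\mathcal{M}$ from Proposition~\ref{prop:dominointegrable} as an integrable operator in the Its--Izergin--Korepin--Slavnov (IIKS) sense, identify the associated Riemann--Hilbert problem with the one for $U$, and then compute the Fredholm determinant ratio via a rank-one perturbation. For the first step, I would write $M(z,z') = f(z)^T g(z')/(z-z')$ with piecewise $2\times 1$ vectors $f,g$ built from the four data $z^k$, $1/W(z)$, $-W(z)$, $z^{-k}$ placed appropriately on $\Sigma_1$ and $\Sigma_2$, so that $f^T g\equiv 0$; a direct matrix check then confirms $I - 2\pi i\,f(z)g(z)^T = J_U(z)$ on each contour. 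By the standard IIKS dictionary, unique solvability of the RH problem for $U$ reduces to invertibility of $1-\mathcal{M}$ on $L^2(\Sigma_1)\oplus L^2(\Sigma_2)$, and this holds because $\det(1-\mathcal{M}) = F_N^{m,k}(a)/F_N(a)$ is strictly positive by Propositions~\ref{prop:dominoFredholm}--\ref{prop:dominointegrable} together with tileability of $A_N^{m,k}$. The normalization also forces $\det U\equiv 1$, since $\det J_U \equiv 1$ and $U(\infty)=I$.

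For the ratio, the key observation is the telescoping $(z/z')^{k+1} - (z/z')^k = (z/z')^k(z-z')/z'$, which makes $\mathcal{M}_{k+1}-\mathcal{M}_k$ rank one with factorization
\[
(M_{k+1}-M_k)(z,z') = a(z)b(z'),\qquad a(z) = \frac{1_{\Sigma_1}(z)z^k}{2\pi i},\quad b(z') = \frac{1_{\Sigma_2}(z')}{(z')^{k+1}}.
\]
The Weinstein--Aronszajn identity for rank-one perturbations then yields
\[
\frac{\det(1-\mathcal{M}_{k+1})}{\det(1-\mathcal{M}_k)} = 1 - \int_{\Sigma_2} b(z')\,[(1-\mathcal{M}_k)^{-1}a](z')\,dz',
\]
so I am reduced to evaluating this integral in terms of the RH solution $U$.

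This evaluation is the crux of the proof. Using the IIKS resolvent formula, which expresses $(1-\mathcal{M}_k)^{-1} - 1$ as an integral operator with kernel $F(z)^T G(z')/(z-z')$ where $F=Uf$ and $G=U^{-T}g$, the function $(1-\mathcal{M}_k)^{-1}a$ on $\Sigma_2$ becomes an integral over $\Sigma_1$ that splits into two Cauchy-type transforms of $w^k W(w) U_{11}(w)$ and $w^k W(w) U_{21}(w)$, with prefactors built from $U_{12}(z)$ and $U_{22}(z)$. I would then identify these two Cauchy transforms, via the Plemelj representation of the second column of $U$ (which is analytic off $\Sigma_1$, jumps there by $z^k W(z) U_1(z)$, and is normalized to $\binom{0}{1}$ at infinity), as $-2\pi i\,U_{12}(z)$ and $-2\pi i\,(U_{22}(z)-1)$ for $z$ outside $\Sigma_1$. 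After substitution the $U_{12}U_{22}$ cross terms cancel and one obtains the compact identity $[(1-\mathcal{M}_k)^{-1}a](z) = U_{12}(z)/(2\pi i\,W(z))$ for $z\in\Sigma_2$. This algebraic recombination is the main obstacle of the proof: it is the step that ties the two rank-one factors $a$ and $b$ to the single matrix entry $U_{11}(0)$ and is where the specific structure of the RH problem for $U$ is used essentially.

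To finish, a second Plemelj argument applied to the first column of $U$ --- analytic off $\Sigma_2$, jumping there by $-z^{-k} U_2(z)/W(z)$, and normalized to $\binom{1}{0}$ at infinity --- evaluated at the interior point $z=0$ gives
\[
U_{11}(0) = 1 - \frac{1}{2\pi i}\oint_{\Sigma_2}\frac{U_{12}(s)}{s^{k+1}W(s)}\,ds,
\]
which is exactly the trace integral produced above. Substituting back, the Weinstein--Aronszajn identity becomes $\det(1-\mathcal{M}_{k+1})/\det(1-\mathcal{M}_k) = U_{11}(0)$, completing the proof.
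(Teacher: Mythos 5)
Your proposal is correct and follows essentially the same route as the paper's proof of the general statement (Theorem \ref{thm:multi gap ratio}): rank-one difference $\mathcal M_{k+1}-\mathcal M_k$, the matrix determinant lemma, the IIKS identification of $(1-\mathcal M_k)^{-1}$ acting on the rank-one factor with $U_+\vec{\mathrm{g}}$, and a residue evaluation yielding $U_{11}(0)$. The only (cosmetic) differences are that you re-derive the resolvent identity $[(1-\mathcal M_k)^{-1}a](z)=U_{12}(z)/(2\pi i\,W(z))$ by explicit Plemelj computations instead of citing \cite[Lemma 2.12]{DeiftItsZhou}, and you extract $U_{11}(0)$ from the Cauchy representation of the first column rather than the paper's deformation of $\frac{1}{2\pi i}\int_{\Sigma_2}U_{jj,\pm}(z)\frac{dz}{z}$; both computations check out.
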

We will prove this result in Section \ref{section:RH}; again, we will prove a more general version of this result corresponding to several intervals, and to a more general class of kernels.

\paragraph{Step 4: Solve the RH problem explicitly in terms of a Pad\'e approximation problem.}
Finally, we construct an explicit solution of the RH problem for $U$ in terms of a Pad\'e approximation problem, in the special case corresponding to domino tilings of the Aztec diamond, i.e.\ when $W$ is given by \eqref{def:W}.  In particular we prove the following.
\begin{proposition}\label{prop:dominoPade}
For $N\in \N_{>0}$, $m\in\{1,\ldots, N\}$, $k\in\{1,\ldots, m+1\}$, we have 
\begin{align*}
U_{11}(0)=\kappa_{N}^{m,k}(a),
\end{align*}
where $\kappa_{N}^{m,k}(a)$ is the leading coefficient of $p_{N}^{m,k}(z;a)$, with $p_{N}^{m,k}(z;a)/q_{N}^{m,k}(z;a)$ the
type $[m-k,k-1]$ Pad\'e approximant of the function $f_{N}^{m,k}(z;a)$, as defined before Theorem \ref{theorem:tilings}.
\end{proposition}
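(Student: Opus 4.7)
My plan is to construct the unique solution of the Riemann-Hilbert problem for $U$ explicitly in terms of the Pad\'e polynomials $p := p_N^{m,k}(\cdot;a)$ and $q := q_N^{m,k}(\cdot;a)$, and then to read off $U_{11}(0)$. First I would introduce the remainder polynomial $r$ of degree at most $N-m$ defined by
\[ p(z) - f_N^{m,k}(z;a)\, q(z) = (z+a)^m\, r(z), \]
together with the reciprocal polynomials $\tilde p(z) = z^{m-k} p(1/z)$, $\tilde q(z) = z^{k-1} q(1/z)$, $\tilde r(z) = z^{N-m} r(1/z)$. The normalization $q(0)=1$ makes $\tilde q$ monic of degree exactly $k-1$, while $\tilde p(0)$ equals the leading coefficient of $p$, which by definition is $\kappa_N^{m,k}(a)$. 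Substituting $z \mapsto 1/z$ in the Pad\'e identity and multiplying by $z^N$ would then yield the equivalent dual identity
\[ \tilde p(z)\, z^{N-m+k} - (z-a)^{N-m+1}\, \tilde q(z) = (1+az)^m\, \tilde r(z), \qquad (\star) \]
which will be the linchpin of the construction.

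Using $(\star)$, I would define a candidate $U$ piecewise. Denote by $\Omega_0$, $\Omega_1$, and $\Omega_2$ respectively the interior of $\Sigma_1$, the region between $\Sigma_1$ and $\Sigma_2$, and the exterior of $\Sigma_2$. For the first two columns set
\[ U_{11}(z) = \tilde p(z) \text{ on } \Omega_0\cup\Omega_1, \qquad U_{11}(z) = \frac{(z-a)^{N-m+1}\,\tilde q(z)}{z^{N-m+k}} \text{ on } \Omega_2, \]
\[ U_{12}(z) = \frac{\tilde q(z)}{(1+az)^m} \text{ on } \Omega_0, \qquad U_{12}(z) = -\frac{\tilde r(z)}{(z-a)^{N-m+1}} \text{ on } \Omega_1 \cup \Omega_2. \]
Continuity of $U_{11}$ across $\Sigma_1$ and of $U_{12}$ across $\Sigma_2$ is immediate, while the non-trivial jumps on $\Sigma_1$ (for $U_{12}$) and on $\Sigma_2$ (for $U_{11}$) reduce, after clearing denominators, precisely to identity $(\star)$. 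Analyticity in each region is clear since $0 \in \Omega_1$, $a \in \Omega_0$, and $-1/a \in \Omega_2$, and the asymptotic $U_{11}(z) \to 1$ as $z \to \infty$ is immediate from $\tilde q$ being monic of degree $k-1$.

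I expect the main technical obstacle to be constructing the second row $(U_{21}, U_{22})$ so that the full matrix $U$ also satisfies the RH conditions---in particular the asymptotic $U(z) \to I_2$ and the relation $\det U \equiv 1$. Applying the same structural analysis to the second column should force $U_{22}|_{\Omega_1 \cup \Omega_2} = Q(z)/(z-a)^{N-m+1}$ with $Q$ monic of degree $N-m+1$, and $U_{21}|_{\Omega_0 \cup \Omega_1} = C(z)$ a polynomial of degree $m+1-k$; these polynomials must satisfy a companion divisibility condition at $z=a$ together with the consistency relation $\tilde p(z) Q(z) + \tilde r(z) C(z) = (z-a)^{N-m+1}$ (which is equivalent to $\det U \equiv 1$ in $\Omega_1$). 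I would establish existence and uniqueness of such $Q, C$ in the prescribed degree ranges either from coprimality of $\tilde p$ and $\tilde r$---a consequence of Pad\'e non-degeneracy---or, alternatively, by appealing to the unique solvability of the RH problem asserted in Proposition~\ref{prop:dominoRH}. Once the full matrix $U$ is in hand, its uniqueness forces the candidate to coincide with the actual $U$, and evaluating at $z=0\in\Omega_1$ would finally yield $U_{11}(0) = \tilde p(0) = \kappa_N^{m,k}(a)$, as claimed.
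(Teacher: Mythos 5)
Your core computation is correct, and it is essentially the paper's own argument run in reverse. The paper starts from the solution $U$ (whose existence and uniqueness are already supplied by Proposition \ref{prop:dominoRH}), analytically continues only its \emph{first row} to functions $U_1,U_2$ on $\C\setminus\{a\}$, shows $U_1$ is a polynomial of degree at most $m-k$ and that $\tilde P_{k-1}=(1+az)^mU_2+\frac{z^{N-m+k}}{(z-a)^{N-m+1}}U_1$ is monic of degree $k-1$, and recognizes the resulting relation, after the substitution $z=1/\zeta$, as the Pad\'e problem. Your identity $(\star)$ is exactly that relation pulled back through $\zeta=1/z$: your $\tilde p$ and $\tilde q$ are the paper's $\tilde q_{m-k}$ and $\tilde P_{k-1}$, your piecewise formulas for $U_{11},U_{12}$ reproduce the paper's continuations $U_1,U_2$, and the evaluation $U_{11}(0)=\tilde p(0)=\kappa_N^{m,k}(a)$ agrees. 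The jump verifications you outline do reduce to $(\star)$ as claimed.

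Two points about what the reversed direction costs you. First, your construction takes the normalized Pad\'e polynomials $p,q$ with $q(0)=1$ as given, whereas the paper never assumes they exist: it explicitly \emph{derives} existence and uniqueness of the approximant from the solvability of the RH problem (``the uniqueness and existence of the RH solution $U$ implies the existence and uniqueness of the above Pad\'e approximants''). The normalized Pad\'e conditions form a square linear system which can a priori be singular, so existence is not free; as written, your proof is conditional on a fact whose proof, in the paper, \emph{is} this proposition. Second, the entire second-row construction is avoidable and is the weakest part of your sketch: the coprimality of $\tilde p$ and $\tilde r$ is not justified (Pad\'e problems can be degenerate), and the B\'ezout relation $\tilde pQ+\tilde rC=(z-a)^{N-m+1}$ with the stated degree bounds needs care. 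The clean repair is the one you hint at: since the full $U$ exists by Proposition \ref{prop:dominoRH} and $\det U\equiv 1$, the matrix formed by your candidate first row together with the actual second row of $U$ solves the whole RH problem and therefore equals $U$ by uniqueness, so no explicit $Q,C$ are needed. If you also want the existence of the approximant as an output rather than an input, the efficient route is the paper's: read the Pad\'e data off the first row of the $U$ you already know exists.
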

We will prove this result in Section \ref{section:tiling}.
Finally, Theorem \ref{theorem:tilings} is obtained by taking a telescopic product
\[\prod_{j=k}^m\frac{\det\left(1-1_{\Z\cap [j+1,+\infty)}K_{N,m}\right)}{\det\left(1-1_{\Z\cap [j,+\infty)}K_{N,m}\right)}=\prod_{j=k}^m\kappa_N^{m,j}(a)\]
and by observing that $\det\left(1-1_{\Z\cap [m+1,+\infty)}K_{N,m}\right)=1$.

\section{Tilings and non-intersecting paths}\label{section:paths}

\subsection{Domino tilings of the Aztec diamond}

We consider the Aztec diamond $A_N$. 
Following \cite{Joh2}, we can associate a collection of non-intersecting paths to a domino tiling by applying the following simple rule. On each south domino $[a,a+2]\times [b,b+1]$, we draw a horizontal line connecting $(a,b+1/2)$ with $(a+2,b+1/2)$. On each west domino $[a,a+1]\times [b,b+2]$, we draw an ascending diagonal line segment connecting $(a,b+1/2)$ with $(a+1,b+3/2)$. On each east domino $[a,a+1]\times[b,b+2]$, we draw a descending diagonal line segment connecting $(a,b+3/2)$ with $(a+1,b+1/2)$. No lines are drawn on north dominoes. One then checks easily that these line segments altogether form a configuration of $N$ non-intersecting paths connecting the south-west edge of the Aztec diamond with the south-east edge, see Figure \ref{fig:paths} (left). Such path configurations are in bijection with the set of all domino tilings of $A_N$, as one can easily reconstruct the domino tiling by covering horizontal line segments by south dominoes, diagonal line segments by vertical dominoes, and filling in the remaining holes with north dominoes.

\begin{figure}
\begin{center}
\hspace{-1cm}\begin{tikzpicture}[master]
\node at (0,0) {\includegraphics[width=6cm]{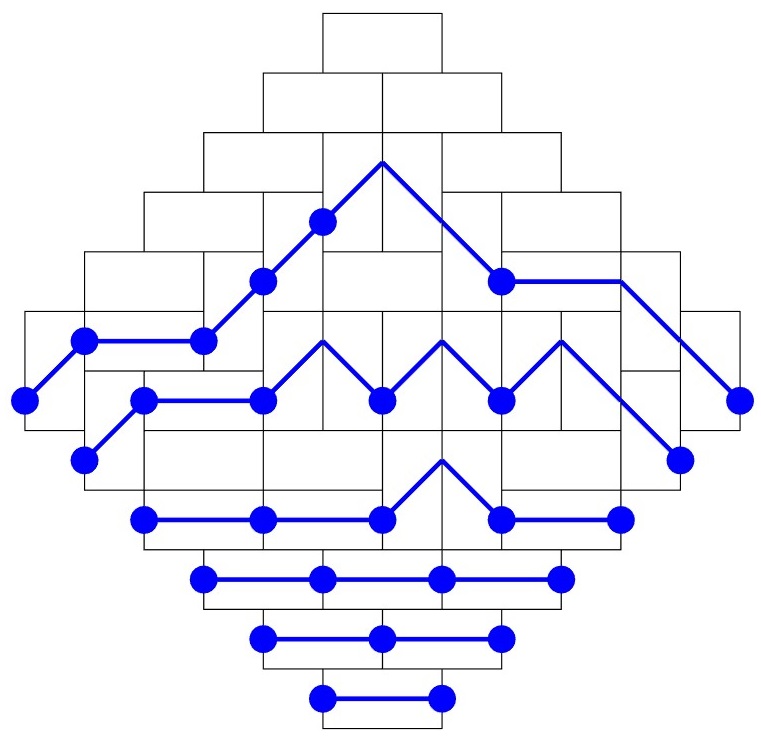}};
\end{tikzpicture} \hspace{1.5cm}
\begin{tikzpicture}[slave]
\node at (0,0) {\includegraphics[width=6cm]{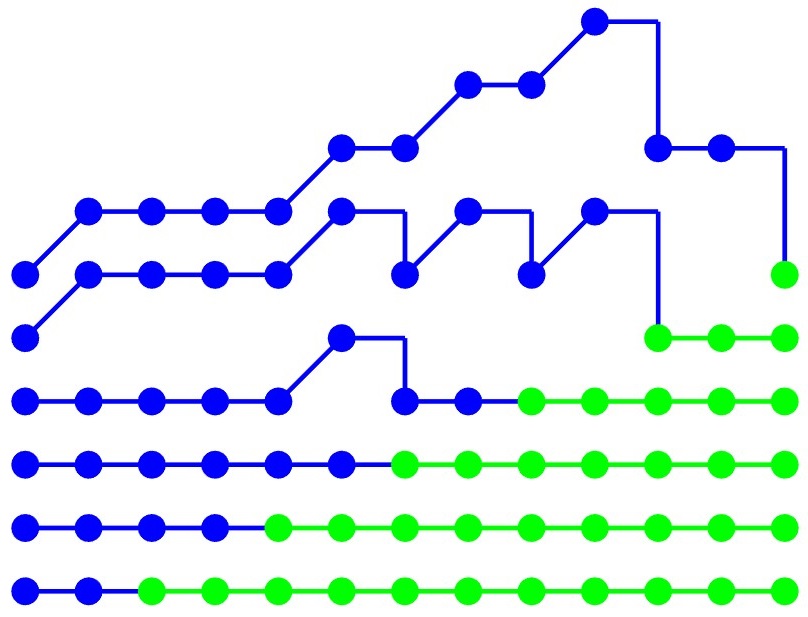}};
\draw[->-=1] (-4,-2.7)--(3.3,-2.7);
\node at (3.2,-2.95) {$r$};
\draw[->-=1] (-4,-2.7)--(-4,2.5);
\node at (-3.8,2.5) {$k$};
\node at (-3.3,0.25) {\footnotesize $(0,0)$};
\node at (-3.61,-2.1) {\footnotesize $(0,1-N)$};
\node at (3.4,0.25) {\footnotesize $(2N,0)$};
\node at (3.73,-2.1) {\footnotesize $(2N,1-N)$};
\end{tikzpicture}
\end{center}
\caption{\label{fig:paths}Left: a tiling of $A_{6}$, and the associated $6$ non-intersecting paths. Right: the corresponding system of $6$ non-intersecting paths on the lattice $\{0,1,\ldots,2N\}\times \Z$. The green dots are added for convenience; they are independent of the tiling. The dots in the left figure correspond to the blue points in the right figure at even time levels.}
\end{figure}

In a next step, we transform the configuration of $N$ non-intersecting paths to a configuration of $N$ non-intersecting paths on the lattice $\{0,1,\ldots,2N\}\times \mathbb Z$, with starting points $(0,0), (0,-1), (0,-2), \ldots,$ and endpoints $(2N,0), (2N,-1), (2N,-2),\ldots$.
To explain how this transformation works, we start with the highest path in the domino tiling, starting at $(-N,-1/2)$ and ending at $(N,-1/2)$. This will correspond to the path on $\{0,1,\ldots,2N\} \times \mathbb Z$ starting at $(0,0)$ and ending at $(2N,0)$.
We replace every ascending diagonal segment $(a,b+1/2)\to (a+1,b+3/2)$ in the domino tiling by two steps: a diagonal step $(\alpha,\beta)\to (\alpha+1,\beta+1)$ followed by a horizontal step $(\alpha+1,\beta+1)\to (\alpha+2,\beta+1)$; we replace every horizontal segment $(a,b+1/2)\to (a+2,b+1/2)$ by two horizontal steps $(\alpha,\beta)\to (\alpha+1,\beta)\to (\alpha+2,\beta)$; finally, we replace every descending diagonal segment $(a,b+3/2)\to (a+1,b+1/2)$ by a vertical down-step $(\alpha,\beta)\to (\alpha,\beta-1)$. By following the first path in the domino tiling, we obtain a path connecting $(0,0)$ with $(2N,0)$. 
By applying the same procedure on the second path in the domino tiling, we obtain a path connecting $(0,-1)$ with $(2N-2,-1)$, which we complement with horizontal steps connecting $(2N-2,-1)$ with $(2N,-1)$.
By doing the same for the $(j+1)$-th path in the domino tiling, we obtain a path connecting $(0,-j)$ with $(2N-2j,-j)$, which we complement with horizontal steps connecting $(2N-2j,-j)$ with $(2N,-j)$. 
See Figure \ref{fig:paths} (right).

Finally, we associate points to this collection of paths on $\{0,1,\ldots,2N\}\times \mathbb Z$, by marking each point $(\alpha,\beta)\in  \{0,1,\ldots,2N\}\times\mathbb Z$ belonging to a path, except the points $(\alpha,\beta)$ which are starting points of a down-step $(\alpha,\beta)\to (\alpha,\beta-1)$. This results in a point configuration of $(2N+1)\times N$ points: each path delivers exactly one point on each vertical line $\{r\}\times \mathbb Z$, $r=0,1,\ldots, 2N$, see again Figure \ref{fig:paths} (right). One can revert this whole construction from domino tiling to point configuration, thus proving that there is a bijection between domino tilings and admissible point configurations. It is sometimes convenient to draw the points corresponding to even time levels also in the domino tiling, as shown in Figure \ref{fig:paths} (left).

\medskip

Now, we introduce a probability measure on the set of domino tilings: to a domino tiling $T\in \mathcal T(A_N)$, we assign the probability
\begin{align}\label{prob measure}
\mathbb P(T)=\frac{a^{v(T)}}{(1+a^2)^{\frac{N(N+1)}{2}}},\qquad 0<a\leq 1,
\end{align}
where $v(T)$ is the number of vertical dominoes in $T$. Recall the identity \eqref{eq:ADT}, which implies that \eqref{prob measure} is indeed a probability measure. Note also that $a=1$ corresponds to the uniform measure.
Johansson \cite{Joh2} proved that under this probability distribution, the corresponding configurations of points on $\{0,1,\ldots,2N\} \times \mathbb Z$ form a determinantal point process on $\{0,1,\ldots,2N\} \times \mathbb Z$. If we restrict to the $N$ points at level $r$, i.e.\ the points on $\{r\}\times \mathbb Z$, these $N$ points $x_1>\ldots>x_N$ alone also form a determinantal point process on $\mathbb Z$.
The correlation kernel is given by \cite[Formula (2.17)]{Joh2}
\begin{align}\label{eq:def K0}
K_{N,m}\left(n,n'\right)=
\frac{1}{(2\pi i)^2}\int_{|u|=r_2}d u \int_{|v|=r_1} d v \frac{u^{n-1}v^{-n'}}{v-u}\frac{\widetilde W(v)}{\widetilde W(u)},\qquad n,n'\in\mathbb Z,
\end{align}
where, if $a\neq 1$, $a<r_1<1/a$, $0<r_2<r_1$, and
\be\label{def:Wtilde}\widetilde W(z)=\frac{1}{(1-az)^{N-m+\epsilon}(1+a/z)^m},\ee
where $r=2m-\epsilon$, $m\in\{1,\ldots, 2N\}$, $\epsilon\in\{0,1\}$.
Note also that one can take the limit $a\to 1_-$ by suitably shifting the contours in \eqref{eq:def K0}.

By changing $u\mapsto 1/u$, $v\mapsto 1/v$, we obtain that
\begin{align}\label{eq:def K2}
K_{N,m}\left(n,n'\right)=
\frac{1}{(2\pi i)^2}\int_{\Sigma_2}d u \int_{\Sigma_1}d v \frac{u^{-n-1}v^{n'}}{u-v}\frac{W(v)}{W(u)},\qquad n,n'\in\mathbb Z,
\end{align}
where
$\Sigma_1$ goes around $0$ and $a$ but not around $-1/a$, $\Sigma_2$ lies outside $\Sigma_1$,
and
\begin{equation}\label{def:W2}
W(z)=\frac{z^{N-m+\epsilon-1}}{(z-a)^{N-m+\epsilon}(1+az)^m}.
\end{equation}
We omitted the dependence of $K_{N,m}$ in $\epsilon$ in the notation. For $\epsilon=1$, $K_{N,m}$ is precisely the kernel from Proposition \ref{prop:dominoFredholm}.

From the general theory of determinantal point processes, see e.g. \cite{Jptrf}, we then know that the probability distribution of the largest particle $x_1$ is given by
\[\mathbb P\left(x_1< k\right)=\det\left(1-1_{\Z\cap [k,+\infty)}K_{N,m}\right)_{\ell^2(\mathbb Z)},\]
where the determinant on the right-hand side is the Fredholm determinant (see Section \ref{section:Fourier} for a definition and properties of Fredholm determinants).
The event $x_1< k$ corresponds to the event where the first path on $\mathbb Z\times\mathbb Z$ does not cross the half line $\{2m-\epsilon\}\times[k,+\infty)$. From the shape of the paths, it then follows that there is a whole region free of paths, see Figure \ref{fig:gap at odd times} for an illustration in the case $\epsilon=1$.

\begin{figure}
\begin{center}
\hspace{-1cm}\begin{tikzpicture}[master]
\node at (0,0) {\includegraphics[width=6.5cm]{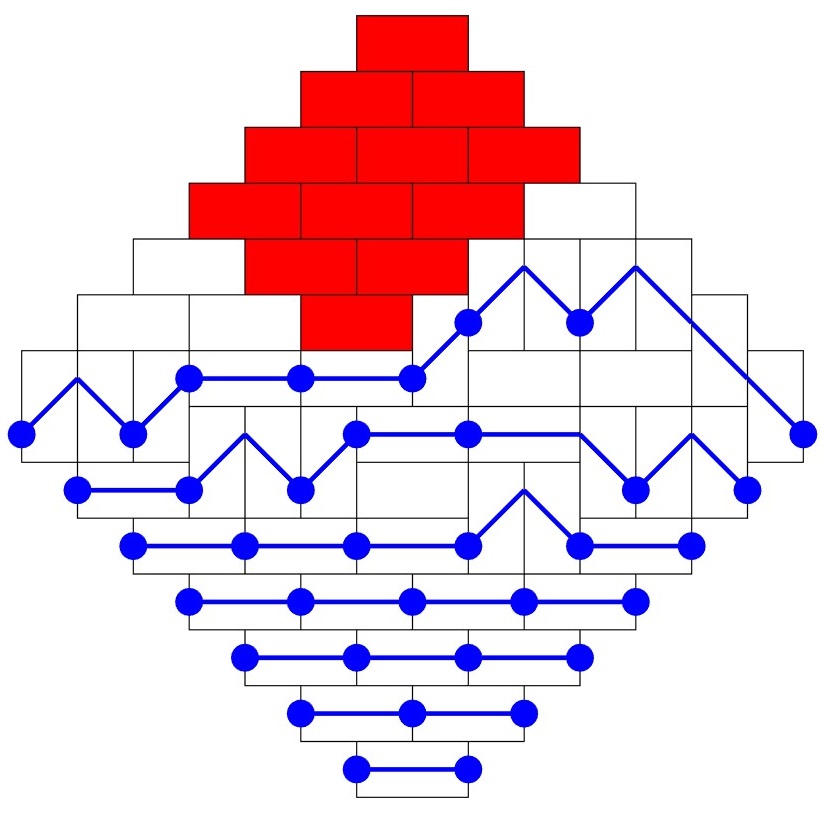}};
\end{tikzpicture} \hspace{0.5cm}
\begin{tikzpicture}[slave]
\node at (0,0) {\includegraphics[width=6.5cm]{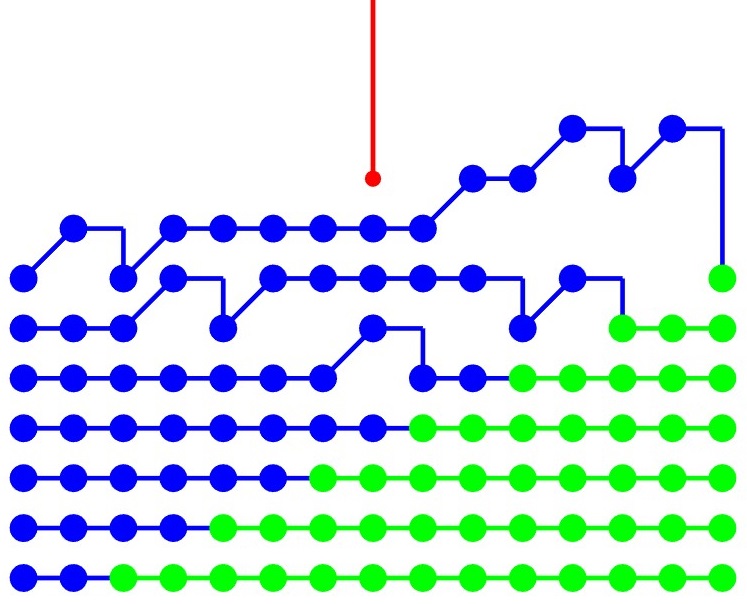}};
\end{tikzpicture}
\end{center}
\caption{\label{fig:gap at odd times}Any tiling of $A_{N}$ with only north dominoes outside the region $A_N^{m,k}$ (left) corresponds to a system of $N$ non-intersecting paths on $\{0,\ldots,2N\}\times \Z$ such that the highest particle at time $2m-1$ is less than $k$ (right). This is illustrated in the picture with $N=7$, $m=4$, $k=2$. The segment on the right is $\{2m-1\}\times [k,+\infty)$; it does not contain any blue points.}
\end{figure}

\begin{figure}
\begin{center}
\begin{tikzpicture}[master]
\node at (0,0) {\includegraphics[width=4.5cm]{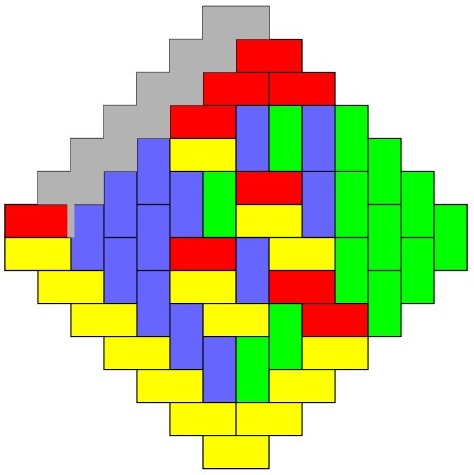}};
\end{tikzpicture}
\begin{tikzpicture}[slave]
\node at (0,0) {\includegraphics[width=4.5cm]{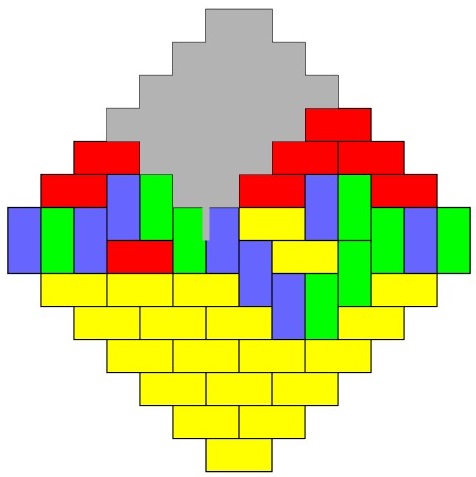}};
\end{tikzpicture}
\begin{tikzpicture}[slave]
\node at (0,0) {\includegraphics[width=4.5cm]{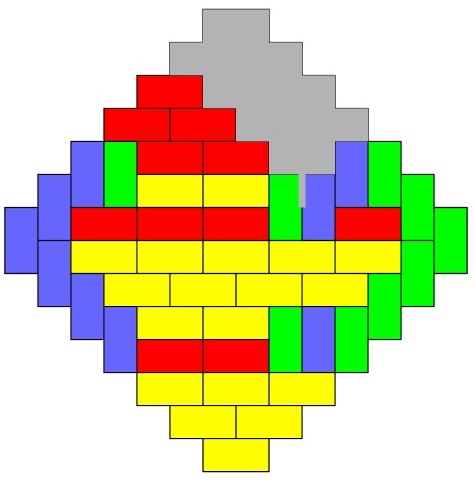}};
\end{tikzpicture}
\end{center}
\caption{\label{fig:ADreduced tilde}Left: a tiling of $\tilde{A}_{7}^{1,1}$. Middle: a tiling of $\tilde{A}_{7}^{3,1}$. Right: a tiling of $\tilde{A}_{7}^{5,2}$. The north, south, east and west dominoes are shown in red, yellow, green and blue, respectively. The set $A_{N}\setminus \tilde{A}_{N}^{m,k}$ is shown in grey; it contains $N-m$ corners on the north-west side, $m-k+1$ corners on the north-east side, as well as a vertical segment at the bottom. }
\end{figure}

{For $\epsilon=1$, the event $x_{1}<k$} corresponds to a domino tiling for which all dominoes in a certain region are north dominoes. More precisely, it corresponds to the event that there are only north dominoes outside the region $A_N^{m,k}$ defined in the introduction.
In other words, we have
\[\mathbb P\left(\mbox{only north dominoes outside }A_N^{m,k}\right)=\det\left(1-1_{\Z\cap [k,+\infty)}K_{N,m}\right)_{\ell^2(\mathbb Z)}.\]
But, by definition of $\mathbb{P}$, we also have
\[\mathbb P\left(\mbox{only north dominoes outside }A_N^{m,k}\right)=\frac{\sum_{T\in\mathcal T(A_N^{m,k})} a^{v(T)}}{(1+a^2)^{\frac{N(N+1)}{2}}}.\]
Combining the two above identities, we obtain Proposition \ref{prop:dominoFredholm}.

Proposition \ref{prop:dominoFredholm} admits a natural analogue for $\epsilon=0$. To state it, let us first define $\tilde{A}_{N}^{m,k}$ as
\begin{align}\label{def of A tilde}
\tilde{A}_{N}^{m,k}  = \begin{cases}
A_{N}^{m+1,k+1}\setminus v^{m,k}, & \mbox{if } m\in \{0,1,\ldots,N-1\}, \\
A_{N}, & \mbox{if } m = N,
\end{cases}
\end{align}
where $v^{m,k}$ is the vertical segment from $(2m+k-N-1,k-1)$ to $(2m+k-N-1,k)$, see Figure \ref{fig:ADreduced tilde}. Then
\begin{multline*}
\mathbb P\left( \begin{subarray}{l} \mbox{only north dominoes outside }\ds \tilde{A}_N^{m,k} \\[0.1cm] \mbox{and no domino contains $v^{m,k}$ in its interior} \end{subarray} \right)=\det\left(1-1_{\Z\cap [k,+\infty)}K_{N,m}\right)_{\ell^2(\mathbb Z)} \\
= \frac{\sum_{T\in\mathcal T(\tilde{A}_N^{m,k})} a^{v(T)}}{(1+a^2)^{\frac{N(N+1)}{2}}},
\end{multline*}
see also Figure \ref{fig:gap at even times}. Here $\mathcal T(\tilde{A}_N^{m,k})$ is the set of all domino tilings of the region $\tilde{A}_N^{m,k}$; in particular, no domino in such a tiling contains $v^{m,k}$ in its interior.
\begin{figure}
\begin{center}
\hspace{-1cm}\begin{tikzpicture}[master]
\node at (0,0) {\includegraphics[width=6.5cm]{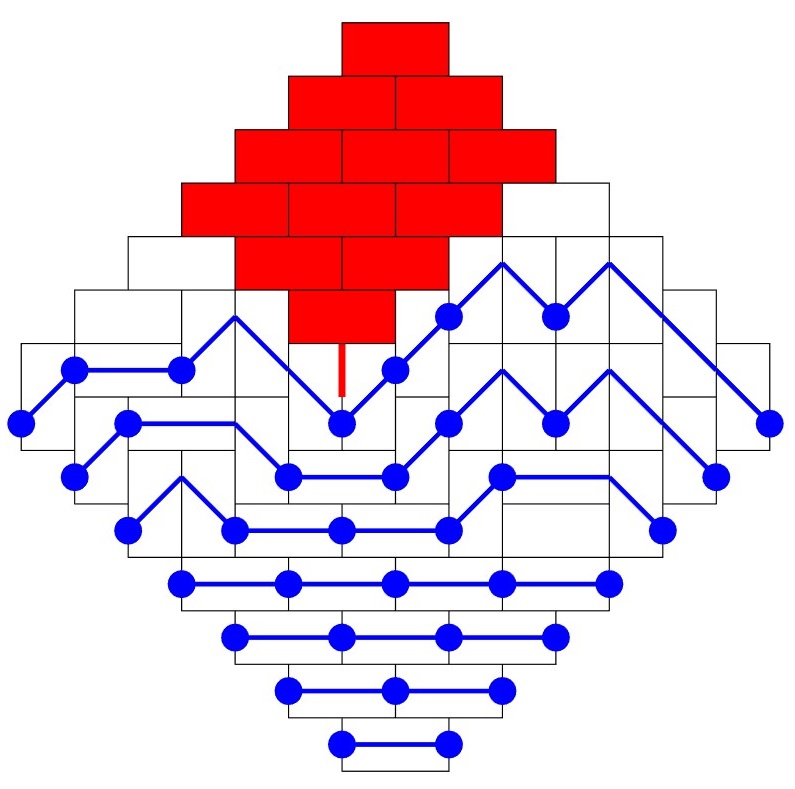}};
\end{tikzpicture} \hspace{0.5cm}
\begin{tikzpicture}[slave]
\node at (0,0) {\includegraphics[width=6.5cm]{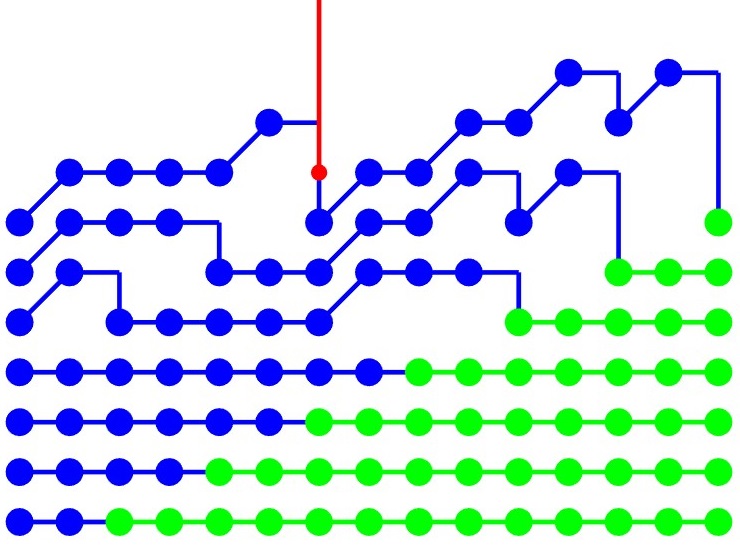}};
\end{tikzpicture}
\end{center}
\caption{\label{fig:gap at even times}Any tiling of $\tilde{A}_{N}^{m,k}$ (left) corresponds to a system of $N$ non-intersecting paths on $\{0,\ldots,2N\}\times \Z$ such that the highest particle at time $2m$ is less than $k$ (right). This is illustrated in the picture with $N=7$, $m=3$, $k=1$. The segment on the right is $\{2m\}\times [k,+\infty)$; it does not contain any blue points.}
\end{figure}
All the above results are summarized in the following generalization of Proposition \ref{prop:dominoFredholm}.
\begin{proposition}\label{prop:dominoFredholm 2}
For $N\in \N_{>0}$, $r\in\{0,\ldots, 2N\}$, $k\in\{1,\ldots, m+1\}$, we have
\begin{align*}
F_N^{m,k,\epsilon}(a)
=F_N(a)\ \det\left(1-1_{\Z\cap [k,+\infty)}K_{N,m}\right)_{\ell^2(\mathbb Z)},
\end{align*}
where $K_{N,m}$ is given by \eqref{eq:def K2} with $m=\lceil \frac{r}{2} \rceil$ and $\epsilon = 2m-r$, and
\begin{align*}
F_N^{m,k,\epsilon}(a) = \begin{cases}
\sum_{T\in\mathcal T(A_N^{m,k})} a^{v(T)} = F_N^{m,k}(a), & \mbox{if } \epsilon = 1, \\
\sum_{T\in\mathcal T(\tilde{A}_N^{m,k})} a^{v(T)}, & \mbox{if } \epsilon = 0.
\end{cases}
\end{align*}
\end{proposition}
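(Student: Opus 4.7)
The strategy is to handle both cases $\epsilon\in\{0,1\}$ in a unified way by combining three ingredients: (i) Johansson's determinantal structure on the non-intersecting path ensemble coming from a domino tiling of $A_N$, recalled around \eqref{eq:def K0}; (ii) the standard identity
\[
\mathbb{P}(x_1<k)=\det\bigl(1-1_{\Z\cap[k,+\infty)}K_{N,m}\bigr)_{\ell^2(\Z)}
\]
expressing a gap probability of a determinantal point process as a Fredholm determinant; and (iii) a bijective identification of the event $\{x_1<k\}$ at the time level $r=2m-\epsilon$ with the family of admissible tilings on the corresponding reduced region. Ingredients (i) and (ii) are cited directly; once (iii) is established, the biased measure \eqref{prob measure} converts the gap probability into $F_N^{m,k,\epsilon}(a)/(1+a^2)^{N(N+1)/2}$, and multiplying by $F_N(a)=(1+a^2)^{N(N+1)/2}$ yields the claim.

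For $\epsilon=1$, the combinatorial identification is the one already sketched around Figure \ref{fig:gap at odd times}. The event that the top particle at the odd time $r=2m-1$ lies strictly below $k$ forces all $N$ paths to stay below $k$ on that column, and the path-to-tiling dictionary then forces the region $A_N\setminus A_N^{m,k}$ to be tiled by north dominoes only. Conversely, any tiling of $A_N^{m,k}$ completed with north dominoes on the complement yields a valid path configuration satisfying the gap condition. The resulting bijection identifies $\{x_1<k\}$ with $\mathcal{T}(A_N^{m,k})$, and the $\epsilon=1$ part of the proposition follows.

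For $\epsilon=0$, the gap event lives at the even time $r=2m$, and the delicate extra observation, which I expect to be the main obstacle, is that a horizontal step of the top path at height $k-1$ on column $2m$ corresponds in the dictionary to a single domino whose interior contains precisely the vertical unit segment $v^{m,k}$. Tracing Figure \ref{fig:gap at even times} backwards, $x_1<k$ therefore forces all paths to have height at most $k-1$ on column $2m$ \emph{and} forbids the top path from making such a horizontal step; the complement $A_N\setminus\tilde{A}_N^{m,k}$ is again filled with north dominoes, while the disallowed step at height $k-1$ encodes the condition that no domino contains $v^{m,k}$ in its interior. This produces the bijection between $\{x_1<k\}$ and $\mathcal{T}(\tilde{A}_N^{m,k})$, and the argument closes exactly as in the odd case. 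The asymmetry between odd and even columns in the path construction is the technical point that requires care, but once the forbidden segment is pinned down the two cases become parallel.
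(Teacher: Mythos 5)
Your overall strategy is the same as the paper's: associate non-intersecting paths and a point configuration to each tiling, invoke Johansson's kernel and the standard gap-probability formula, and identify the event $\{x_1<k\}$ at level $r=2m-\epsilon$ with the admissible tilings of the reduced domain; the $\epsilon=1$ part is fine. The problem is in the $\epsilon=0$ identification, which you correctly single out as the crux but then state incorrectly. In the paper's conventions the marked point of a path at a given column is the height at which the path \emph{exits} that column (every lattice point on a path is marked except the starting points of down-steps), so $x_1<k$ says only that the top path exits column $2m$ at height at most $k-1$. A horizontal step of the top path at height $k-1$ through column $2m$ produces the marked point $(2m,k-1)$, i.e.\ $x_1=k-1<k$: this configuration is \emph{allowed} by the gap condition, not forbidden, and it genuinely arises from tilings of $\tilde A_N^{m,k}$. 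Imposing your extra prohibition would identify $\{x_1<k\}$ with a strictly smaller family of tilings, and the resulting count would be wrong.

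The correct statement sits one unit higher: the borderline forbidden configuration is a marked point at $(2m,k)$, i.e.\ the top path exiting column $2m$ at height $k$. If it exits diagonally (a west domino), the domino reaches up to $y=k+1$ and so intrudes into the removed wedge $A_N\setminus A_N^{m+1,k+1}$ near its apex, hence this case is already excluded by the ``only north dominoes outside'' condition; if it exits horizontally (a south domino occupying $y\in[k-1,k]$), the domino stays entirely below the wedge and is excluded only by the extra requirement that no domino contain $v^{m,k}$ in its interior --- that is precisely why the segment is needed. Conversely, the top path may still pass \emph{through} $(2m,k)$ provided it immediately takes a down-step (an east domino occupying $y\in[k-2,k]$, which stays inside $\tilde A_N^{m,k}$), so your claim that $x_1<k$ ``forces all paths to have height at most $k-1$ on column $2m$'' is also too strong: it is the exit heights, not the paths themselves, that are bounded by $k-1$. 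With these two corrections the bijection closes and your argument coincides with the paper's.
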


\begin{remark}\label{remark:notilings}
By looking at the non-intersecting paths,
we can see that the case $k=1$ is special, both for $\epsilon=1$ and for $\epsilon=0$. In this case, the $N$ particles at level $2m-\epsilon$ have only $N$ admitted sites $(2m-\epsilon,0), (2m-\epsilon,-1),\ldots, (2m-\epsilon,-N+1)$, such that they are fixed. For $\epsilon=1$, this implies that all paths are horizontal in a triangular region below (and on the left of) $(2m-1,k)$. On the level of the domino tilings, this enforces south dominos in the mirror image of $A_N\setminus A_N^{m,k}$ with respect to the horizontal axis $y=0$, see Figure \ref{fig:mirror frozen}.
This observation leads indeed to the identity \eqref{eq:formulamirror} stated in the introduction.
For $\epsilon=0$, a similar argument applies: the mirror image of $A_N\setminus \tilde A_N^{m,k}$ with respect to the horizontal axis $y=0$ has to be tiled with south dominoes only, which implies that the remaining freedom lies in tiling two disjoint (touching) Aztec diamonds of order $m$ and of order $N-m$, such that we then have
\[F_N^{m,1,0}(a)=F_{m}(a)F_{N-m}(a)=\left(1+a^2\right)^{\frac{N(N+1)}{2}-m(N-m)}.\]
More generally, for a given $k\in \{1,\ldots,m+1\}$, the $N$ particles at level $2m-\epsilon$ have $N+k-1$ admitted sites. In particular, for $k=2$, there are $N+1$ admitted sites for the $N$ points, resulting in a region that is ``almost frozen", see Figure \ref{fig:one or two paths} (left). Figure \ref{fig:one or two paths} (right) corresponds to a case with $k=3$.
A similar argument implies also that the regions $A_N^{m,k}$ and $\tilde A_N^{m,k}$ are not tileable if $k\leq 0$: in these cases there are less than $N$ admissible sites at level $2m-\epsilon$ for the $N$ particles.

\end{remark}

\subsection{Lozenge tilings of hexagons}\label{subsection: lozenge tilings}
\begin{figure}
\begin{center}
\hspace{-1cm}\begin{tikzpicture}[master]
\node at (0,0) {\includegraphics[width=6.5cm]{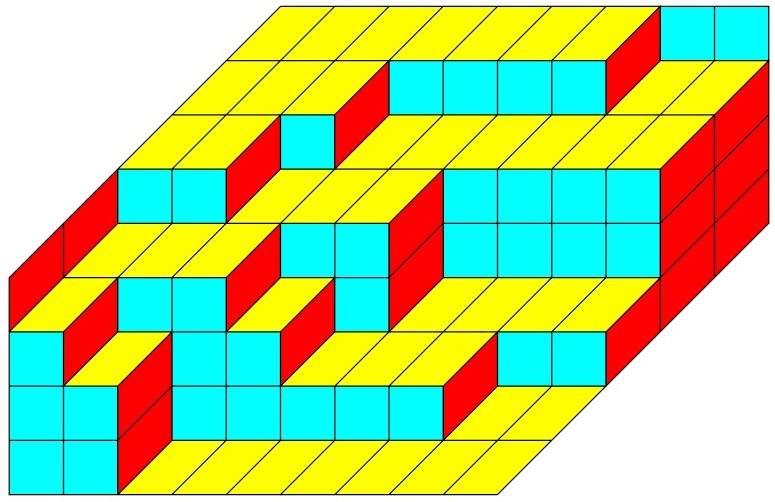}};
\end{tikzpicture} \hspace{0.5cm} \begin{tikzpicture}[slave]
\node at (0,0) {\includegraphics[width=6.5cm]{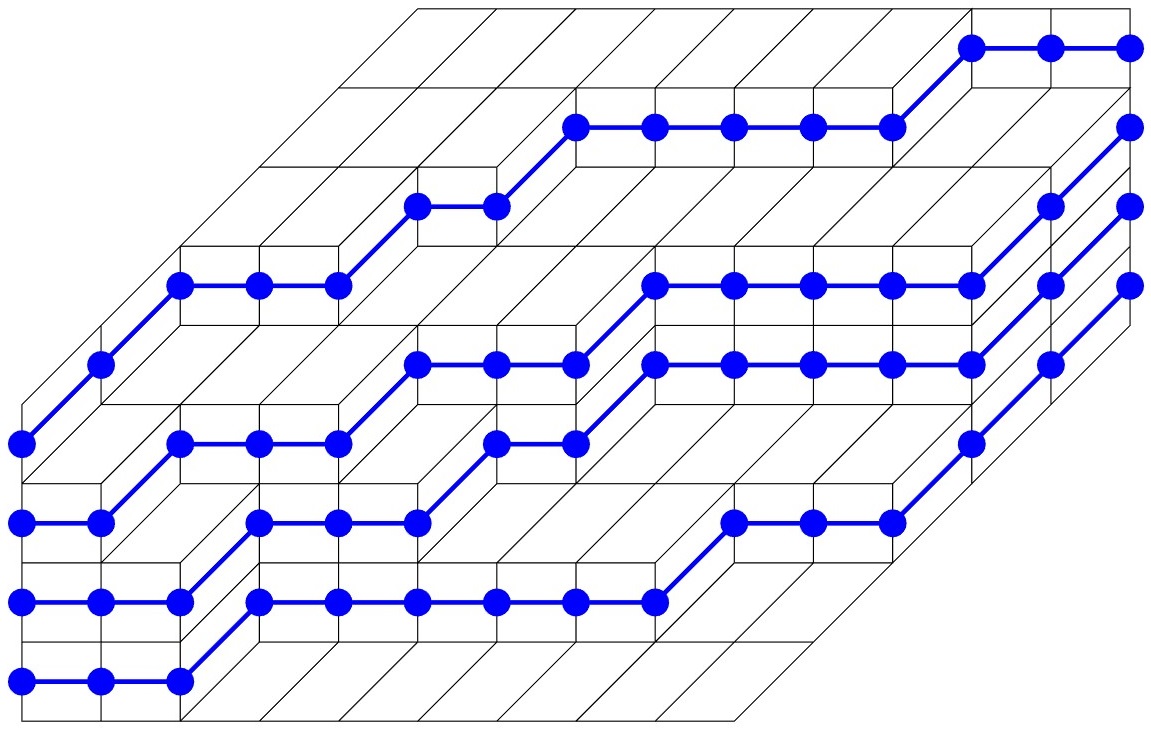}};
\end{tikzpicture} 
\end{center}
\caption{\label{fig:hexagon and non-intersecting paths}A tiling of $H_{L,M,N}$ with $L=14$, $M=5$ and $N=4$ (left), and the corresponding system of non-intersecting paths (right).}
\end{figure}

Let $L,M,N\in\mathbb N_{>0}$ with $M<L$. We associate $N$ paths to a lozenge tiling of $H_{L,M,N}$ by drawing a horizontal line along the middle of each horizontal lozenge $\tikz[scale=.25,baseline={([yshift=-0.5ex]current bounding box.center)}]{ \draw (0,0)--(0,1)--(1,1)--(1,0)--(0,0); \draw[thick] (0,.5)--(1,.5); \filldraw (0,0.5) circle(3pt);  \filldraw (1,0.5) circle(3pt); }$ and along the middle of each diagonal lozenge with vertical edges $\tikz[scale=.25,baseline={([yshift=-0.5ex]current bounding box.center)}]{\draw (0,0)--(0,1)--(1,2)--(1,1)--(0,0); \draw[thick] (0,.5)--(1,1.5); \filldraw (0,0.5) circle(3pt);  \filldraw (1,1.5) circle(3pt);}$. See Figure \ref{fig:hexagon and non-intersecting paths}. We also change our coordinate system ($y\to y-\frac{1}{2}$) such that the starting points of the paths are $(0,0),\ldots, (0,N-1)$ and the endpoints are $(L,M),\ldots, (L,M+N-1)$. 
Then we draw a point wherever these paths intersect the vertical lines $x=r$ for $r=0,1,\ldots, L$.
This gives a point configuration of $(L+1)N$ points ($N$ points at each time level $r$). 
Under the uniform measure on the set of lozenge tilings of the hexagon, the points at time level $r$ form a determinantal point process. 
By \cite[Theorem 4.7 and Proposition 4.9]{DK2021}, the kernel at time $r\in \{0,\ldots,L\}$ is given by
\begin{align}\label{def:Khexagon}
K_{L,M,N,r}(n,n') = \frac{1}{(2\pi i)^{2}} \int_{\gamma}\int_{\gamma} R(v,u) (u+1)^{r}\frac{(v+1)^{L-r}}{v^{M+N}} \frac{v^{n'}}{u^{n+1}} dudv,
\end{align}
where $\gamma$ is a closed contour oriented positively and surrounding $0$, and $R(v,u)$ is a polynomial in both variables $u,v$, and is of the form
\begin{align}\label{def:Rhexagon}
R(v,u) = \frac{1}{u-v} \begin{pmatrix}
0 & 1
\end{pmatrix} Y^{-1}(v) Y(u) \begin{pmatrix}
1 \\ 0
\end{pmatrix} = \frac{Y_{11}(v)Y_{21}(u)-Y_{11}(u)Y_{21}(v)}{u-v}.
\end{align}
By analyticity of the integrand, we can deform the first contour of integration $\gamma$ to any contour $\Sigma_1$ surrounding $0$ and the second to any contour $\Sigma_2$ enclosing $\Sigma_1$. Then this kernel is of the form \eqref{eq:def K intro} (or equivalently \eqref{eq:def K} below) with
\begin{align}\label{G hexagon}
G(u,v) = (u+1)^{r}\frac{(v+1)^{L-r}}{v^{M+N}} R(v,u)(u-v) = \sum_{j=1}^{2} g_j(u)h_j(v),
\end{align}
where
\begin{align}
&\label{def:ghex} g_{1}(u) = (1+u)^{r}Y_{21}(u), & & g_{2}(u) = (1+u)^{r}Y_{11}(u) \\
&\label{def:hhex} h_{1}(v) = \frac{(1+v)^{L-r}}{v^{M+N}}Y_{11}(v), & & h_{2}(v) = -\frac{(1+v)^{L-r}}{v^{M+N}}Y_{21}(v),
\end{align}
and $Y$ is the unique solution to the following RH problem. 

\subsubsection*{RH problem for $Y$}
\begin{itemize}
\item[(a)] $Y:\C\setminus \gamma \to \C^{2\times 2}$ is analytic.
\item[(b)] $Y$ satisfies
\begin{align*}
Y_{+}(z) = Y_{-}(z) \begin{pmatrix}
1 & \frac{(z+1)^{L}}{z^{M+N}} \\
0 & 1
\end{pmatrix}, \qquad z \in \gamma.
\end{align*}
\item[(c)] $Y(z) = (I+O(z^{-1}))z^{N\sigma_{3}}$ as $z\to \infty$, where $\sigma_{3} = \begin{pmatrix}
1 & 0 \\ 0 & -1
\end{pmatrix}$.
\end{itemize}
As pointed out in \cite[Section 4.8 {of arXiv:1712.05636}]{DK2021},  $Y$ can be written explicitly in terms of the Jacobi polynomials $P_{k}^{(\alpha,\beta)}(x)$ given by
\begin{align*}
P_{k}^{(\alpha,\beta)}(x) = \frac{\frac{d^{k}}{dx^{k}}\big( (x-1)^{k+\alpha}(x+1)^{k+\beta} \big)}{k!2^{k}(x-1)^{\alpha}(x+1)^{\beta}} = \sum_{s=0}^{k} \binom{k+\alpha}{k-s}\binom{k+\beta}{s}\bigg( \frac{x-1}{2} \bigg)^{s} \bigg( \frac{x+1}{2} \bigg)^{k-s}.
\end{align*}
More precisely, we have
\begin{align*}
& Y_{11}(z) = \eta_{1}P_{N}^{(-M-N,L)}(2z+1), & & \eta_{1} = \bigg(\sum_{s=0}^{N}\binom{-M}{N-s}\binom{N+L}{s} \bigg)^{-1}, \\
& Y_{21}(z) = \eta_{2}P_{N-1}^{(-M-N,L)}(2z+1), & & \eta_{2} = \bigg( \frac{-1}{2\pi i}\int_{\gamma}P_{N-1}^{(-M-N,L)}(2z+1) \; \frac{(z+1)^{L}}{z^{M+N}}z^{N-1}dz \bigg)^{-1}.
\end{align*}

Now, we observe that a tiling of the reduced hexagon $H_{L,M,N}^{r,k}$ corresponds to a configuration of points in which there are no points in $\{r\}\times\{k,k+1,k+2,\ldots\}$, see Figure \ref{fig:hexagons grey regions and gap}.
Thus, we have the following result, analogous to Proposition \ref{prop:dominoFredholm 2} (recall that $G_{L,M,N}^{r,k}$ is the number of lozenge tilings of $H_{L,M,N}^{r,k}$).

\begin{figure}
\begin{center}
\begin{tikzpicture}[master]
\node at (0,0) {\includegraphics[width=6cm]{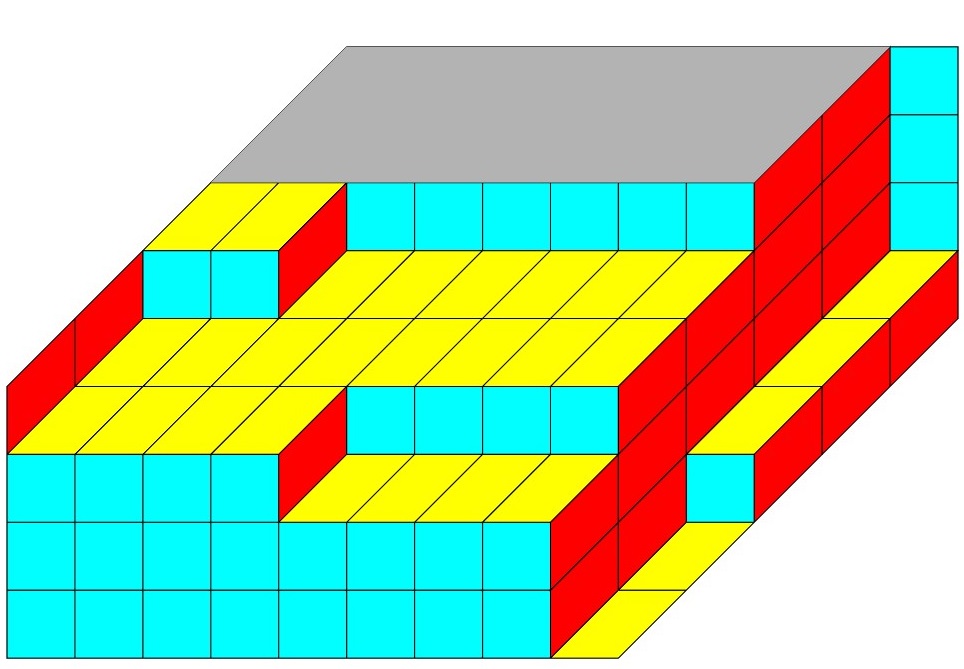}};
\end{tikzpicture} \hspace{0.4cm}
\begin{tikzpicture}[slave]
\node at (0,0) {\includegraphics[width=6cm]{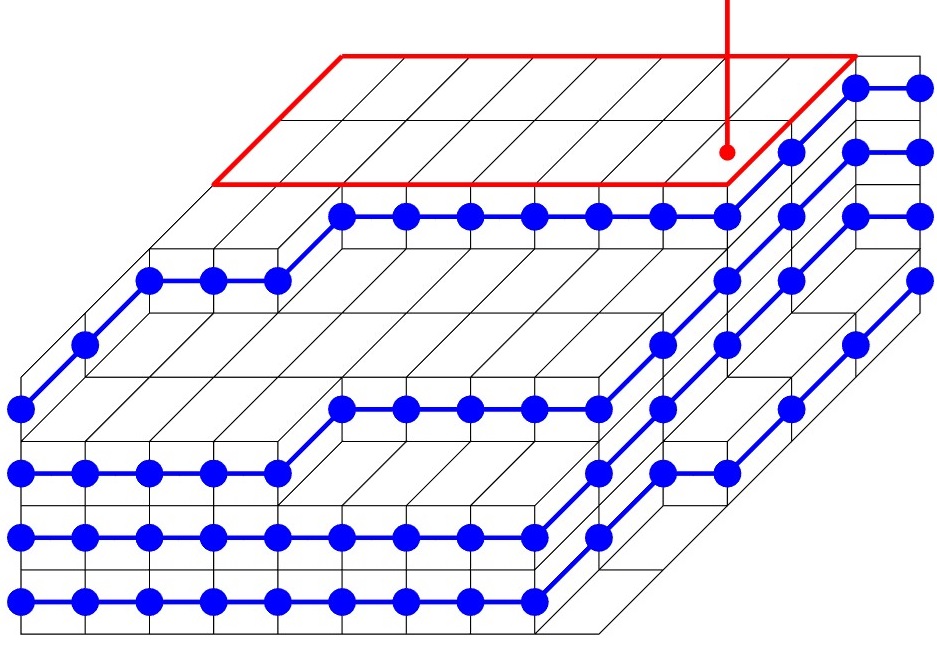}};
\end{tikzpicture}
\end{center}
\caption{\label{fig:hexagons grey regions and gap}
A tiling of $H_{L,M,N}^{r,k}$ with $L=14$, $M=5$, $N=6$, $r=11$ and $k=7$ (left) corresponds to a system of non-intersecting paths with no points in $\{r\}\times \{k,k+1,\ldots\}$ (right).}
\end{figure}

\begin{proposition}\label{prop:hexagonFredholm}
Let $L,M,N\in\mathbb N_{>0}$ with $M<L$, and let $r\in\{1,\dots, L-1\}$, $\max\{N,N-L+M+r\}\leq k\leq \min\{M+N-1,r+N-1\}$.
We have the identity
\begin{equation}\label{id:Fredholmhexagon}
G_{L,M,N}^{r,k}=G_{L,M,N}\det\left(1-1_{\Z\cap [k,\infty)}K_{L,M,N,r}\right)_{\ell^2(\mathbb Z)},
\end{equation}
where $K_{L,M,N,r}$ is given by \eqref{def:Khexagon}--\eqref{def:Rhexagon}, with $Y$ the unique solution to the above RH problem.
\end{proposition}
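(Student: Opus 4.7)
The plan is to deduce Proposition \ref{prop:hexagonFredholm} from two independent ingredients: a purely combinatorial bijection identifying tilings of $H_{L,M,N}^{r,k}$ with tilings of $H_{L,M,N}$ whose path configurations avoid a half-line at time $r$, and the general determinantal gap probability formula applied to the kernel $K_{L,M,N,r}$ whose existence and explicit form are already recalled in Section \ref{subsection: lozenge tilings}. No new analytic machinery should be required; the work is to verify that the two descriptions of the same event match.

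First, I would invoke the path bijection described at the beginning of Section \ref{subsection: lozenge tilings}: every $T\in\mathcal T(H_{L,M,N})$ corresponds bijectively to a system of $N$ non-intersecting lattice paths from $(0,j)$ to $(L,M+j)$, $j=0,\ldots,N-1$, and hence to a point configuration on $\{0,\ldots,L\}\times\mathbb Z$ obtained by marking the intersections with vertical lines $x=r$. As recorded in the excerpt, the points at level $r$ form, under the uniform measure on $\mathcal T(H_{L,M,N})$, a determinantal point process on $\mathbb Z$ with correlation kernel $K_{L,M,N,r}$ given by \eqref{def:Khexagon}--\eqref{def:Rhexagon}.

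Second, I would identify the event
\[
E_{r,k}:=\{\text{no point of the configuration lies in }\{r\}\times[k,\infty)\cap\mathbb Z\}
\]
with the event that all tiles lying in the ``upper-left'' region $H_{L,M,N}\setminus H_{L,M,N}^{r,k}$ are forced into a unique frozen pattern. Concretely, having no path crossing the vertical segment $\{r\}\times[k,\infty)$ means that at time $r$ all $N$ paths lie at heights $\le k-1$; combined with the fixed starting heights $0,\ldots,N-1$ at $x=0$, the ending heights $M,\ldots,M+N-1$ at $x=L$, and the inequality constraints $\max\{N,N-L+M+r\}\le k\le\min\{M+N-1,r+N-1\}$ (which are exactly the ranges under which the region $H_{L,M,N}^{r,k}$ is well-defined and tileable), one checks that the portion of each path lying in the removed parallelogram is uniquely determined and consists entirely of lozenges of a single type, as illustrated in Figure \ref{fig:hexagons grey regions and gap}. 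Thus $E_{r,k}$ is in bijection with $\mathcal T(H_{L,M,N}^{r,k})$, yielding
\[
\mathbb P(E_{r,k})=\frac{G_{L,M,N}^{r,k}}{G_{L,M,N}}.
\]

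Third, I would conclude by the standard Fredholm determinant formula for gap probabilities of a determinantal point process with kernel $K$ on $\mathbb Z$, namely
\[
\mathbb P(E_{r,k})=\det\!\left(1-1_{\mathbb Z\cap[k,\infty)}K_{L,M,N,r}\right)_{\ell^2(\mathbb Z)},
\]
see e.g.\ \cite{Jptrf}. Equating the two expressions for $\mathbb P(E_{r,k})$ and multiplying by $G_{L,M,N}$ yields \eqref{id:Fredholmhexagon}.

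I expect the main (and essentially only) obstacle to be the combinatorial verification in the second step: carefully checking that the admissibility conditions on $(r,k)$ force the removed region to be frozen in a unique way, so that the bijection between $E_{r,k}$ and $\mathcal T(H_{L,M,N}^{r,k})$ is exact (no overcounting or missing configurations at the boundary between the frozen region and the tileable part $H_{L,M,N}^{r,k}$). Once this is established, the analytic part reduces to a direct application of the known determinantal structure recalled in the excerpt.
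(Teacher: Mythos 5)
Your proposal is correct and follows essentially the same route as the paper: the path bijection and the determinantal structure with kernel $K_{L,M,N,r}$ from \cite{DK2021} are quoted, the event of having no points in $\{r\}\times[k,\infty)$ is identified with tilings of the reduced hexagon (the removed parallelogram being frozen), and the standard gap-probability Fredholm determinant formula is applied. The paper's own proof is in fact just the one-line observation accompanying Figure \ref{fig:hexagons grey regions and gap}, so your more careful discussion of the admissibility conditions on $(r,k)$ is, if anything, more detailed than the original.
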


\section{Towards an integrable kernel}\label{section:Fourier}

In this section, we consider a general kernel of the form
\begin{align}\label{eq:def K}
K\left(n,n'\right)=
\frac{1}{(2\pi i)^2}\int_{\Sigma_2}d u \int_{\Sigma_1}d v \frac{u^{-n-1}v^{n'}G(u,v)}{u-v},\qquad n,n'\in\mathbb Z,
\end{align}
with $\Sigma_1$ the unit circle, and $\Sigma_2$ a simple closed curve outside the unit circle, both oriented positively. 
The function $G(u,v)$ takes the form 
\begin{align}\label{def:G}
G(u,v)=\sum_{j=1}^{d} g_j(u)h_j(v),
\end{align}
for some $d\geq 1$ and functions $\{g_j, h_j\}_{j=1}^{d}$.

We assume $g_j\in L^2(\Sigma_2)$ and $h_j\in L^2(\Sigma_1)$ for each $j\in \{1,\ldots,d\}$, such that
\[\int_{\Sigma_2}\int_{\Sigma_1}|G(u,v)|^2dvdu<\infty,\]
by the Cauchy-Schwarz inequality. 
Our main cases of interest correspond to $d=1$ and $h_1=1/g_1=W$ with $W$ given by \eqref{def:W2}, and to $d=2$ and $G$ given by \eqref{G hexagon}. We choose however to work in a more general context here, because other kernels of the general form \eqref{eq:def K} appear in other models, such that our result will be applicable there too. For instance, one can consider random domino tilings of the Aztec diamond or random lozenge tilings of the hexagon with respect to a non-uniform measure with periodic weightings in horizontal and vertical directions. These models give rise to determinantal point processes with kernels which are also of the form \eqref{eq:def K}--\eqref{def:G}. Indeed, by \cite[Theorem 4.7 and Proposition 4.9]{DK2021}, if the weight structure is periodic of period $p$ in the vertical direction, the corresponding kernel at time $r$ can be encoded in a $p\times p$ matrix as follows,
\begin{align}\label{double contour integrale duits kuijlaars}
[\mathrm{K}(pn+i,pn'+j)]_{i,j=0}^{p-1} = \frac{1}{(2\pi i)^{2}}\int_{\gamma} \int_{\gamma} A_{r,L}(w) \frac{\begin{pmatrix}
0_{p} & I_{p}
\end{pmatrix}Y^{-1}(w)Y(z) \begin{pmatrix}
I_{p} \\ 0_{p}
\end{pmatrix}}{z-w} \frac{A_{0,r}(z)w^{n'}}{z^{n+1}w^{M+N}}dzdw.
\end{align}
Here $A_{r,L}$ and $A_{0,r}$ are $p\times p$ matrix-valued functions, $0_p$ and $I_p$ denote the zero matrix and the identity matrix of size $p\times p$,
and $Y$ is the solution to a $2p\times 2p$ matrix RH problem, similar to the one stated below \eqref{def:ghex}--\eqref{def:hhex} corresponding to uniform lozenge tilings. Each entry in the matrix $[\mathrm{K}(pn+i,pn'+j)]_{i,j=0}^{p}$ thus has a similar structure as the kernel \eqref{def:Khexagon}--\eqref{def:Rhexagon}, and can be written in the form \eqref{eq:def K}--\eqref{def:G}, now with $d=2p$ instead of $d=2$.
Instead of representing the kernel as a $p\times p$ matrix, we can also write it at once in the form
\[\sum_{i,j=0}^{p-1}K(m,m')1_{\{m\equiv i\,{\rm mod }\, p,m'\equiv j\, {\rm mod }\, p\}},\]
where $K(m,m')$ is the kernel in the corresponding entry of the matrix. This kernel is of the form \eqref{eq:def K}--\eqref{def:G}, but now with (much larger) $d=2p^3$.

We write $\mathcal F:L^2\left(\Sigma_1\right)\to \ell^2(\mathbb Z)$ for the Fourier series transform defined as
\begin{align}\label{def:Fourier}
\mathcal F[f](n)= \int_{-1/2}^{1/2}f(e^{2\pi i  t})e^{-2\pi i nt}dt=\int_{\Sigma_{1}}f(z)z^{-n}\frac{dz}{2\pi i z},\qquad n\in\mathbb Z,
\end{align}
and
$\mathcal F^{-1}:\ell^2(\mathbb Z)\to L^2\left(\Sigma_1\right)$ for the inverse given by
\be\label{def:Fourierinv}
\mathcal F^{-1}[\sigma](z)= \sum_{n\in\mathbb Z}\sigma(n)z^n,\qquad z\in \Sigma_1.
\ee

Consider a subset $\mathcal I$ of $\mathbb Z$ which is bounded below and which is the disjoint union of finitely many clusters of consecutive integers.
We can parametrize such a set $\mathcal I$ as
\begin{equation}\label{def:I}\mathcal I=\sqcup_{j=0}^q \mathcal I_j,\qquad \mathcal I_j=[k_{2j+1},k_{2j}]\cap\mathbb Z.
\end{equation} 
Here we require that $k_1,\ldots, k_{2q+1}\in\mathbb Z$ and that $k_0\in\mathbb Z\cup\{+\infty\}$, with the inequalities
\begin{align}\label{inequalities between the kj's}
k_{2q+1}\leq k_{2q} <  \ldots \leq  k_{2}< k_{1}\leq k_{0}\leq +\infty
\end{align}
in order to ensure that the clusters $\mathcal I_0,\ldots, \mathcal I_q$ are non-empty and disjoint. 
Note that this parametrization of $\mathcal I$ is not unique, as we could split one cluster of consecutive points into several ones. It will however be advantageous to keep $q$ as small as possible, i.e. to choose $q$ so that $q+1$ is the minimal number of clusters of consecutive points in $\mathcal I$.
The simplest case occurs when $q=0$; then $\mathcal I=[k_1,k_0]\cap\mathbb Z$ is a single cluster of consecutive points. If moreover $k_0=+\infty$, we have a semi-infinite cluster $\mathcal I=\{k_1,k_1+1,k_1+2,\ldots\}$, and this will be our first case of interest later on.

Let $1_{\mathcal{I}_j}:\ell^2(\mathbb Z)\to\ell^2(\mathbb Z)$ for $j=0,1,\ldots, q$ be the projection operator on $\mathcal I_j$ defined by
\[1_{\mathcal{I}_j}[\sigma](n)=\begin{cases}\sigma(n)&\mbox{if $n\in \mathcal I_j$,}\\
0&\mbox{if $n\in\mathbb Z\setminus\mathcal I_j$,}\end{cases}\]
and let $\mathcal K:\ell^2(\mathbb Z)\to\ell^2(\mathbb Z)$ be
the integral operator with kernel $K$ given by
\begin{align*}
\mathcal{K}[\sigma](n) := \sum_{n'\in \mathbb Z}K(n,n')\sigma(n'), \qquad n \in \Z.
\end{align*}

\begin{proposition}\label{prop:Fourier}
For any $j\in \{0,1,\ldots,q\}$, the integral operator 
\begin{align}\label{def:L}
\mathcal L_j:=\mathcal F^{-1} 1_{\mathcal{I}_j}\mathcal K\mathcal F:L^2(\Sigma_1)\to L^2(\Sigma_1)
\end{align}
has kernel
\begin{align}\label{def:kernelL}
L_j(z,z')=\frac{1}{(2\pi i)^2}\int_{\Sigma_2}d u  \frac{( \frac{z}{u} )^{1+k_{2j}} - ( \frac{z}{u} )^{k_{2j+1}}}{z-u} \frac{G(u,z')}{u-z'},
\end{align}
where for $j=0$, we understand the term $( \frac{z}{u} )^{1+k_{0}}$ as $0$ if $k_0=+\infty$.
\end{proposition}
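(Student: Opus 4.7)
\emph{Proof proposal.} The plan is a direct computation of $\mathcal L_j[f](z)$ by applying $\mathcal F$, $\mathcal K$, $1_{\mathcal I_j}$ and $\mathcal F^{-1}$ in turn. By linearity and density, it suffices to test the identity on the basis $\{z^m\}_{m\in\Z}$ of $L^2(\Sigma_1)$; this sidesteps every technicality connected with the $L^2$ Fourier series of a general function, since from \eqref{def:Fourier} one has $\mathcal F[z^m](n')=\delta_{m,n'}$ by residues, so that $\mathcal K\mathcal F[z^m](n)=K(n,m)$ and
\[
\mathcal L_j[z^m](z)\;=\;\sum_{n=k_{2j+1}}^{k_{2j}}K(n,m)\,z^n.
\]

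Next, I would insert the double contour representation \eqref{eq:def K} for $K(n,m)$, pull the sum over $n$ inside the two integrals, and evaluate the purely algebraic geometric series
\[
\sum_{n=k_{2j+1}}^{k_{2j}} u^{-n-1}z^n \;=\; \sum_{n=k_{2j+1}}^{k_{2j}} \frac{1}{u}\Bigl(\frac{z}{u}\Bigr)^{\!n}\;=\;\frac{(z/u)^{1+k_{2j}}-(z/u)^{k_{2j+1}}}{z-u}.
\]
Plugging this back, the resulting expression is precisely $\int_{\Sigma_1}L_j(z,v)\,v^m\,dv$ with $L_j$ given by \eqref{def:kernelL}. This verifies the claim on the dense subspace of trigonometric polynomials, and identifies the kernel of $\mathcal L_j$.

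The only genuine obstacle is the infinite case $j=0$ with $k_0=+\infty$, in which $\mathcal I_0=\{k_1,k_1+1,\ldots\}$. One then needs absolute and uniform convergence of $\sum_{n\geq k_1}(z/u)^n$ on $\Sigma_1\times\Sigma_2$ in order both to justify the interchange of sum and double integral and to obtain the geometric-sum formula in its limiting form. Since $\Sigma_2$ lies strictly outside the closed unit disk, we have $|z/u|=1/|u|<1$ uniformly on the compact set $\Sigma_1\times\Sigma_2$, giving
\[
\sum_{n\geq k_1}\frac{1}{u}\Bigl(\frac{z}{u}\Bigr)^{\!n}\;=\;\frac{-(z/u)^{k_1}}{z-u},
\]
which is exactly the formula above under the convention $(z/u)^{1+k_0}=0$ stated in the proposition. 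All other steps (swapping the finite sum over $n$ with the contour integrals, recognizing the answer as $\int_{\Sigma_1}L_j(z,v)v^m\,dv$) are mechanical.
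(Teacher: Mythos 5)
Your proposal is correct and follows essentially the same route as the paper: the paper first computes the kernel of $\mathcal K\mathcal F$ by inserting the double contour representation and using $\mathcal F^{-1}\circ\mathcal F=\mathrm{id}$, then composes with $\mathcal F^{-1}1_{\mathcal I_j}$, which produces exactly the geometric sum $\sum_{n\in\mathcal I_j}u^{-n-1}z^n$ that you evaluate; your variant of testing on the monomials $z^m$ (for which $\mathcal F[z^m]=\delta_{m,\cdot}$) is just a transposed organization of the same computation. Your convergence remark for the semi-infinite cluster, based on $|z/u|<1$ uniformly on the compact set $\Sigma_1\times\Sigma_2$, is the same justification the paper relies on.
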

\begin{proof}
We can write $K$ as 
\[K(n,n')=\frac{1}{2\pi i}\int_{\Sigma_2}d u \ u^{-n-1}\mathcal F\left[\frac{G(u,\cdot)}{u-\cdot}\right](-n'-1).\]
Hence, the kernel $R(n,z')$ of $\mathcal K\mathcal F:L^2(\Sigma_1)\to\ell^2(\mathbb Z)$ is given by
\begin{align*}
R(n,z')&=\frac{1}{(2\pi i)^2}\int_{\Sigma_2}d u \ u^{-n-1}\sum_{n'\in\mathbb Z}(z')^{-n'-1}\mathcal F\left[\frac{G(u,\cdot)}{u-\cdot}\right](-n'-1)\\
&=\frac{1}{(2\pi i)^2}\int_{\Sigma_2}d u \ u^{-n-1}
\sum_{m\in\mathbb Z}(z')^{m}\mathcal F\left[\frac{G(u,\cdot)}{u-\cdot}\right](m)
\\&=\frac{1}{(2\pi i)^2}\int_{\Sigma_2}d u \ u^{-n-1}\left(\mathcal F^{-1}\circ\mathcal F\right)\left[\frac{G(u,\cdot)}{u-\cdot}\right](z')
\\&=\frac{1}{(2\pi i)^2}\int_{\Sigma_2}d u \ u^{-n-1}\frac{G(u,z')}{u-z'},
\qquad n\in\mathbb Z, z'\in \Sigma_1.
\end{align*}
Composing this from the left with $\mathcal F^{-1} 1_{\mathcal{I}_j}$, we obtain
that
\begin{align*}
L_j(z,z') & = \frac{1}{(2\pi i)^2}\int_{\Sigma_2}d u \ \frac{G(u,z')}{u-z'} \sum_{n\in \mathcal{I}_j} u^{-n-1}z^n \\
& = \frac{1}{(2\pi i)^2}\int_{\Sigma_2}d u \  \frac{G(u,z')}{(u-z')(z-u)} \bigg( \Big( \frac{z}{u} \Big)^{1+k_{2j}} - \Big( \frac{z}{u} \Big)^{k_{2j+1}} \bigg),
\end{align*}
which is the same as \eqref{def:kernelL}.
\end{proof}

We will now use the above factorization \eqref{def:L} to obtain an identity for Fredholm determinants. Recall that the Fredholm determinant of a trace-class integral operator can be written as a Fredholm series:
in the case of a trace-class operator $\mathcal K$ acting on $\ell^2(\mathbb Z)$ with kernel $K$, we have
\[
\det\left(1+\mathcal K\right)_{\ell^2(\mathbb Z)}
=1+\sum_{k=1}^\infty\frac{1}{k!}\sum_{n_1,\ldots, n_k\in\mathbb Z} \det\left(K(n_i,n_j)\right)_{i,j=1}^k,\]
while in the case of a trace-class operator $\mathcal K$ acting on $L^2(\Sigma)$ with kernel $K$ for some contour $\Sigma$ in the complex plane, we have
\[
\det\left(1+\mathcal K\right)_{L^2(\Sigma)}
=1+\sum_{k=1}^\infty\frac{1}{k!}\int_{\Sigma^k}\det\left(K(z_i,z_j)\right)_{i,j=1}^k\prod_{j=1}^kdz_j.\]
We will use a number of standard properties of Fredholm determinants. First, a bounded linear operator 
$\mathcal K:\mathcal H_1\to\mathcal H_1$ on a separable Hilbert space $\mathcal H_1$ is a trace-class operator  if and only if $\mathcal K=\mathcal A\mathcal B$ can be written as a composition of two Hilbert-Schmidt operators $\mathcal B:\mathcal H_1\to\mathcal H_2$ and $\mathcal A:\mathcal H_2\to\mathcal H_1$, for some separable Hilbert space $\mathcal H_2$.
Secondly, for any trace-class operator $\mathcal K:\mathcal H_1\to \mathcal H_1$ and for any invertible bounded linear operator $\mathcal B:\mathcal H_2\to\mathcal H_1$ between two separable Hilbert spaces,
we
have the identity
\[\det\left(1+\mathcal B^{-1}\mathcal K\mathcal B\right)_{\mathcal H_2}=\det\left(1+\mathcal K\right)_{\mathcal H_1}.\]
Finally, for two trace-class operators $\mathcal T_1,\mathcal T_2:\mathcal H\to\mathcal H$, we have \cite[Theorem 3.5 (a)]{Simon}
\[\det\left((1+\mathcal T_1)(1+\mathcal T_2)\right)_{\mathcal H}=\det\left(1+\mathcal T_1\right)_{\mathcal H}\det\left(1+\mathcal T_2\right)_{\mathcal H}.\]
See for instance \cite{Simon} for more details about Fredholm determinants and trace-class operators.

\begin{proposition}\label{prop:integrable}
For any $\gamma_0,\ldots, \gamma_q \in \C$, we have
\begin{align}\label{det 1 - sqrt gamma M}
\det\bigg(1-\sum_{j=0}^q\gamma_j 1_{\mathcal I_j}\mathcal K\bigg)_{\ell^2(\mathbb Z)}=\det\left(1-\mathcal M\right)_{L^2(\Sigma_{1}\cup\Sigma_{2})},
\end{align}
where $\mathcal M:L^2(\Sigma_1\cup\Sigma_2)\to L^2(\Sigma_1\cup\Sigma_2)$ has kernel \be\label{def:M}
M(z,z')=\frac{1_{\Sigma_2}(z')1_{\Sigma_1}(z)\sum_{j=0}^{q} \gamma_j\big( ( \frac{z}{z'} )^{k_{2j+1}} - ( \frac{z}{z'} )^{1+k_{2j}} \big) - 1_{\Sigma_1}(z')1_{\Sigma_2}(z)G(z,z')}{2\pi i (z-z')}.
\ee
If $k_0=+\infty$, we understand the term $( \frac{z}{z'} )^{1+k_{0}}$ as $0$. 
\end{proposition}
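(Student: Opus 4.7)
The plan is to combine Proposition \ref{prop:Fourier} with a factorization of $\sum_j \gamma_j \mathcal L_j$ through $L^2(\Sigma_2)$, and then recognize $\mathcal M$ as the off-diagonal block operator whose Fredholm determinant equals that of the composition.

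\textbf{Step 1 (Fourier conjugation).} Since $\mathcal F:L^2(\Sigma_1)\to\ell^2(\mathbb Z)$ is unitary, conjugation by $\mathcal F$ preserves Fredholm determinants, and Proposition \ref{prop:Fourier} yields
\begin{equation*}
\det\Big(1-\sum_{j=0}^q\gamma_j 1_{\mathcal I_j}\mathcal K\Big)_{\ell^2(\mathbb Z)}=\det\Big(1-\sum_{j=0}^q\gamma_j \mathcal L_j\Big)_{L^2(\Sigma_1)}.
\end{equation*}

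\textbf{Step 2 (Factorization).} Formula \eqref{def:kernelL} exhibits $\sum_j\gamma_j L_j(z,z')$ as an integral over $u\in\Sigma_2$ of a product of two factors. I would introduce operators $\mathcal A:L^2(\Sigma_2)\to L^2(\Sigma_1)$ and $\mathcal B:L^2(\Sigma_1)\to L^2(\Sigma_2)$ with kernels
\begin{equation*}
A(z,u)=\frac{1}{2\pi i}\cdot\frac{\sum_{j=0}^q\gamma_j\big((z/u)^{k_{2j+1}}-(z/u)^{1+k_{2j}}\big)}{z-u},\qquad B(u,z')=-\frac{G(u,z')}{2\pi i(u-z')},
\end{equation*}
and a direct sign chase shows $\mathcal A\mathcal B=\sum_{j=0}^q\gamma_j\mathcal L_j$. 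The assumptions $g_j\in L^2(\Sigma_2)$, $h_j\in L^2(\Sigma_1)$, combined with the boundedness of the Cauchy kernel on the disjoint curves $\Sigma_1,\Sigma_2$, ensure that both $\mathcal A$ and $\mathcal B$ are Hilbert--Schmidt; hence $\mathcal A\mathcal B$ and $\mathcal B\mathcal A$ are trace class.

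\textbf{Step 3 (Block identity).} Relative to the orthogonal decomposition $L^2(\Sigma_1\cup\Sigma_2)=L^2(\Sigma_1)\oplus L^2(\Sigma_2)$, formula \eqref{def:M} exhibits $\mathcal M$ in the off-diagonal block form $\begin{pmatrix} 0 & \mathcal A \\ \mathcal B & 0 \end{pmatrix}$. The algebraic factorization
\begin{equation*}
\begin{pmatrix} I & -\mathcal A \\ -\mathcal B & I \end{pmatrix}\begin{pmatrix} I & \mathcal A \\ 0 & I \end{pmatrix}=\begin{pmatrix} I & 0 \\ -\mathcal B & I-\mathcal B\mathcal A \end{pmatrix},
\end{equation*}
together with multiplicativity of the Fredholm determinant (the middle factor has determinant $1$, and the right-hand side is block lower-triangular), gives $\det(1-\mathcal M)_{L^2(\Sigma_1\cup\Sigma_2)}=\det(1-\mathcal B\mathcal A)_{L^2(\Sigma_2)}$. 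Sylvester's identity $\det(1-\mathcal B\mathcal A)_{L^2(\Sigma_2)}=\det(1-\mathcal A\mathcal B)_{L^2(\Sigma_1)}$ then chains with Steps 1--2 to yield \eqref{det 1 - sqrt gamma M}.

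\textbf{Main obstacle.} The computational verification in Step 2 is a routine sign chase, and the block identity in Step 3 is algebraic. The delicate point is the trace-class bookkeeping when $k_0=+\infty$ and $\mathcal I_0$ is semi-infinite: the convention $(z/u)^{1+k_0}=0$ stipulated in the statement is forced by $|z/u|<1$ on $\Sigma_1\times\Sigma_2$, and the Hilbert--Schmidt factorization $\mathcal A\mathcal B$ guarantees that the Fredholm determinants retain their standard meaning in this unbounded regime.
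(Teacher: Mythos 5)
Your outline coincides with the paper's own proof: Fourier conjugation via Proposition \ref{prop:Fourier}, factorization of $\sum_j\gamma_j\mathcal L_j$ through $L^2(\Sigma_2)$ (your $\mathcal A,\mathcal B$ are exactly the paper's $\mathcal Q,\mathcal R$, up to the sign rewriting $-\frac{G(u,z')}{2\pi i(u-z')}=\frac{G(u,z')}{2\pi i(z'-u)}$), and a block-triangular determinant manipulation identifying the result with $\det(1-\mathcal M)$. The sign chase in Step 2 and the block identity in Step 3 are correct; whether one passes through $\det(1-\mathcal B\mathcal A)_{L^2(\Sigma_2)}$ and Sylvester, as you do, or relates $\det(1-\mathcal A\mathcal B)_{L^2(\Sigma_1)}$ directly to the block determinant, as the paper does, is immaterial.

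There is, however, one genuine gap, and it is precisely the point you wave away in your closing paragraph. Every determinant identity you use in Step 3 — $\det(1-\mathcal M)$ as the Fredholm determinant of a trace-class perturbation of the identity, $\det\begin{pmatrix}I&\mathcal A\\0&I\end{pmatrix}=1$, and the multiplicativity $\det\big((1+\mathcal T_1)(1+\mathcal T_2)\big)=\det(1+\mathcal T_1)\det(1+\mathcal T_2)$ — requires the individual operators $\mathcal A$ and $\mathcal B$ to be trace class, not merely Hilbert--Schmidt. Knowing that the \emph{products} $\mathcal A\mathcal B$ and $\mathcal B\mathcal A$ are trace class does not license these manipulations: the off-diagonal block operator $\mathcal M=\begin{pmatrix}0&\mathcal A\\ \mathcal B&0\end{pmatrix}$ is trace class only if each block is, and the unipotent conjugating factor $\begin{pmatrix}I&\mathcal A\\0&I\end{pmatrix}$ is a trace-class perturbation of the identity only if $\mathcal A$ is. Your statement that ``the Hilbert--Schmidt factorization $\mathcal A\mathcal B$ guarantees that the Fredholm determinants retain their standard meaning'' is therefore not a justification.

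The paper closes this gap with an extra argument that you should supply: it inserts the Cauchy integral identity $\frac{1}{z-u}=\frac{1}{2\pi i}\int_{\tilde C}\frac{dt}{(t-z)(t-u)}$ with $\tilde C$ a contour between $\Sigma_1$ and $\Sigma_2$, thereby writing $\mathcal Q=\sum_{j=0}^q\gamma_j(\mathcal A_j\mathcal B_j-\tilde{\mathcal A}_j\tilde{\mathcal B}_j)$ as a finite sum of compositions of two Hilbert--Schmidt operators through $L^2(\tilde C)$; this exhibits $\mathcal Q$ (and, by the same device, $\mathcal R$) as trace class. Without this step, or some substitute for it (e.g.\ working throughout with regularized determinants $\det_2$ and tracking the correction factors), your Step 3 is not justified at the stated level of generality, where $g_j$ and $h_j$ are only assumed square-integrable.
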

\begin{proof}
We can decompose $\mathcal L:=\sum_{j=0}^q\gamma _j\mathcal L_j=\mathcal Q\mathcal R$, with $\mathcal Q:L^2(\Sigma_2)\to L^2(\Sigma_1)$, $\mathcal R:L^2(\Sigma_1)\to L^2(\Sigma_2)$, where $\mathcal Q,\mathcal R$ have kernels
\begin{align*}
Q(z,u)=\frac{1}{2\pi i}\sum_{j=0}^{q} \gamma_j\frac{ ( \frac{z}{u} )^{k_{2j+1}} - ( \frac{z}{u} )^{1+k_{2j}} }{z-u},\qquad R(u,z')=\frac{G(u,z')}{2\pi i(z'-u)}.
\end{align*}
Recall that $\Sigma_{1}\cap  \Sigma_{2} = \emptyset$, so that both $\frac{1}{z-u}$ and $\frac{1}{z'-u}$ remain bounded for $u\in \Sigma_{2}$ and $z,z' \in \Sigma_{1}$. The operator $\mathcal Q$ is Hilbert-Schmidt as a direct consequence of the continuity of $Q(z,u)$ on the compact $\Sigma_{1}\times \Sigma_{2}$, and since we assume $\int_{\Sigma_1}\int_{\Sigma_2}|G(u,z)|^2dudz<\infty$, the operator $\mathcal R$ is also Hilbert-Schmidt. Consequently, $\mathcal L$ is trace-class.
Now it follows from Proposition \ref{prop:Fourier} and the invariance of the Fredholm determinant under conjugation with a bounded linear operator that 
\begin{align*}
\det\bigg(1-\sum_{j=0}^q\gamma_j 1_{\mathcal I_j}\mathcal K\bigg)_{\ell^2(\mathbb Z)} = \det\bigg(1-\mathcal F^{-1} \sum_{j=0}^q\gamma_j 1_{\mathcal I_j}\mathcal K \mathcal F\bigg)_{L^2(\Sigma_{1})}= \det\bigg(1-\sum_{j=0}^q\gamma _j\mathcal L_j\bigg)_{L^2(\Sigma_{1})}.
\end{align*}

Next, we are going to show that
$\mathcal Q, \mathcal R$ are not only Hilbert-Schmidt but also trace-class. This is indeed true, since
\begin{align*}
Q(z,u)=\frac{1}{(2\pi i)^2}\int_{\tilde C}\frac{{dt}}{(t-z)(t-u)} \sum_{j=0}^q\gamma_j\bigg( \Big( \frac{z}{u} \Big)^{k_{2j+1}} - \Big( \frac{z}{u} \Big)^{1+k_{2j}}\bigg),
\end{align*}
with $\tilde C$ a closed curve, oriented positively, surrounding $\Sigma_{1}$ and lying inside $\Sigma_{2}$; so \[\mathcal Q=\sum_{j=0}^{q}\gamma_j(\mathcal A_{j}\mathcal B_{j}-\tilde{\mathcal A}_{j}\tilde{\mathcal B}_{j}),\] where 
\begin{align*}
\Big\{\mathcal A_{j}:L^{2}(\tilde{C})\to L^{2}(\Sigma_{1}),\mathcal B_{j}:L^{2}(\Sigma_{2})\to L^{2}(\tilde{C}),\tilde{\mathcal A}_{j}:L^{2}(\tilde{C})\to L^{2}(\Sigma_{1}),\tilde{\mathcal B}_{j}:L^{2}(\Sigma_{2})\to L^{2}(\tilde{C})\Big\}_{j=0}^{q}
\end{align*}
are the Hilbert-Schmidt operators defined by their kernels
\begin{align*}
& A_{j}(z,t)=\frac{z^{k_{2j+1}}}{2\pi i (t-z)}, \quad \tilde{A}_{j}(z,t)=\frac{-z^{1+k_{2j}}}{2\pi i (t-z)}, \quad B_{j}(t,u)=\frac{u^{-k_{2j+1}}}{2\pi i (t-u)}, \quad \tilde{B}_{j}(t,u)=\frac{-u^{-1-k_{2j}}}{2\pi i (t-u)}.
\end{align*}
This shows that $\mathcal Q$ is trace-class as a finite sum of compositions of two Hilbert-Schmidt operators. A similar argument applies to $\mathcal R$.

Let us now extend $\mathcal L$ trivially to the trace-class operator $\begin{pmatrix}\mathcal L&0\\
0&0\end{pmatrix}$ on the larger space $L^2(\Sigma_1)\oplus L^2(\Sigma_2)$.
In this matrix notation the $(i,j)$-entry denotes an operator from $L^2(\Sigma_j)$ to $L^2(\Sigma_i)$; the matrix of operators acts on vectors of the form $\begin{pmatrix}f_1\\f_2\end{pmatrix}$, where $f_j\in L^2(\Sigma_j)$, using the usual rules of matrix multiplication.
We then have
\begin{align*}
\det\left(1-\mathcal L\right)_{L^2(\Sigma_1)}&=\det\begin{pmatrix}1-\mathcal Q\mathcal R&0\\0&1\end{pmatrix}_{L^2(\Sigma_1)\oplus L^2(\Sigma_2)}\\
&=\det\begin{pmatrix}1-\mathcal Q\mathcal R&0\\0&1\end{pmatrix}_{L^2(\Sigma_1)\oplus L^2(\Sigma_2)}\det\begin{pmatrix}1&0\\-\mathcal R&1\end{pmatrix}_{L^2(\Sigma_1)\oplus L^2(\Sigma_2)}\\
&=
\det\begin{pmatrix}1-\mathcal Q\mathcal R&0\\-\mathcal R&1\end{pmatrix}_{L^2(\Sigma_1)\oplus L^2(\Sigma_2)},
\end{align*}
where we used the multiplicativity of the Fredholm determinant of trace-class perturbations of the identity.
But the latter is equal to
\begin{align*}
\det\begin{pmatrix}1-\mathcal Q\mathcal R&0\\-\mathcal R&1\end{pmatrix}_{L^2(\Sigma_1)\oplus L^2(\Sigma_2)}
&=\det\begin{pmatrix}1&\mathcal Q\\0&1\end{pmatrix}_{L^2(\Sigma_1)\oplus L^2(\Sigma_2)}\det\begin{pmatrix}1&- \mathcal Q\\ -\mathcal R&1\end{pmatrix}_{L^2(\Sigma_1)\oplus L^2(\Sigma_2)}
\\&=
\det\begin{pmatrix}1&-\mathcal Q\\ -\mathcal R&1\end{pmatrix}_{L^2(\Sigma_1)\oplus L^2(\Sigma_2)}.
\end{align*}
We can also view this as a Fredholm determinant on $L^2(\Sigma_1\cup\Sigma_2)$ of the operator $1-\mathcal M$ with kernel
\[M(z,z')=Q(z,z')1_{\Sigma_1}(z)1_{\Sigma_2}(z') + R(z,z')1_{\Sigma_1}(z')1_{\Sigma_2}(z).\]
This is precisely the stated result.
\end{proof}

Observe that Proposition \ref{prop:dominointegrable}
is just a special case of Proposition
\ref{prop:integrable}: it corresponds to $G(u,v)=W(v)/W(u)$ with $W$ as in \eqref{def:W}, $q=0, k_0=+\infty, k_1=k$ and $\gamma_0=1$.

\section{RH problem and ratio identities}\label{section:RH}
In this section, we still consider a general kernel of the form \eqref{eq:def K}--\eqref{def:G}. Recall in particular that $\Sigma_1$ is the unit circle, that $\Sigma_2$ is a simple closed curve outside the unit circle, and that both are positively oriented. As before, we take a subset 
$\mathcal I$ of $\mathbb Z$ which is bounded below and consists of a disjoint union of $q+1$ clusters of consecutive points, and we write it again as 
\begin{align}\label{def of I}
\mathcal{I}:=\sqcup_{j=0}^q\mathcal I_j,\qquad \mathcal I_j=[k_{2j+1},k_{2j}] \cap\mathbb Z,
\end{align}
with $k_0 \in \Z \cup \{+\infty\}$ and $k_1,\ldots, k_{2q+1}\in\mathbb Z$ such that
\begin{align*}
k_{2q+1}\leq k_{2q} <  \ldots \leq  k_{2}< k_{1}\leq k_{0}\leq +\infty.
\end{align*}
By \eqref{def:G}, if we set $k_{0}=+\infty$, the kernel $M$ in \eqref{def:M} is of $(d+2q+1)$-integrable form 
\begin{align}\label{eq:Mint}
M(z,z')=\frac{\vec{\mathrm{g}}^T(z)\vec{\mathrm{h}}(z')}{z-z'},\quad \vec{\mathrm{g}}(z)=\frac{1}{2\pi i}\begin{pmatrix}\gamma_{0} 1_{\Sigma_1}(z)z^{k_{1}} \\
\begin{pmatrix}
-\gamma_{\ell}1_{\Sigma_1}(z)z^{1+k_{2\ell}} \\
\gamma_{\ell}1_{\Sigma_1}(z)z^{k_{2\ell+1}}
\end{pmatrix}_{\ell=1}^{q} \\
\left(1_{\Sigma_2}(z)g_j(z)\right)_{j=1}^{d}
\end{pmatrix},\quad \vec{\mathrm{h}}(z)=\begin{pmatrix}
1_{\Sigma_2}(z)z^{-k_{1}} \\
\begin{pmatrix}
1_{\Sigma_2}(z)z^{-1-k_{2\ell}} \\
1_{\Sigma_2}(z)z^{-k_{2\ell+1}}
\end{pmatrix}_{\ell=1}^{q} \\
\left(-1_{\Sigma_1}(z)h_j(z)\right)_{j=1}^{d}
\end{pmatrix},
\end{align}
while if $k_{0}<+\infty$, the kernel $M$ is of $(d+2q+2)$-integrable form 
\begin{align}\label{eq:Mint mod}
M(z,z')=\frac{\vec{\mathrm{g}}^T(z)\vec{\mathrm{h}}(z')}{z-z'},\quad \vec{\mathrm{g}}(z)=\frac{1}{2\pi i}\begin{pmatrix}\begin{pmatrix}
-\gamma_{\ell}1_{\Sigma_1}(z)z^{1+k_{2\ell}} \\
\gamma_{\ell}1_{\Sigma_1}(z)z^{k_{2\ell+1}}
\end{pmatrix}_{\ell=0}^{q} \\
\left(1_{\Sigma_2}(z)g_j(z)\right)_{j=1}^{d}
\end{pmatrix},\quad \vec{\mathrm{h}}(z)=\begin{pmatrix}
\begin{pmatrix}
1_{\Sigma_2}(z)z^{-1-k_{2\ell}} \\
1_{\Sigma_2}(z)z^{-k_{2\ell+1}}
\end{pmatrix}_{\ell=0}^{q} \\
\left(-1_{\Sigma_1}(z)h_j(z)\right)_{j=1}^{d}
\end{pmatrix}.
\end{align}

By the theory of integrable operators developed by Its, Izergin, Korepin, Slavnov, Deift and Zhou \cite{IIKS, DeiftItsZhou}, we can then characterize
the resolvent $\mathcal M(1-\mathcal M)^{-1}$ in terms of a $(d+2q+1)\times (d+2q+1)$ RH problem (if $k_{0}=+\infty$) or by a $(d+2q+2)\times (d+2q+2)$ RH problem (if $k_{0}<+\infty$), whose jump matrix is equal to $I-2\pi i \vec{\mathrm{g}} \vec{\mathrm{h}}^T$. The solution of this RH problem exists and is unique if $1-\mathcal M$ is invertible (or equivalently, if $\det(1-\mathcal{M})_{L^{2}(\Sigma_{1}\cup \Sigma_{2})} \neq 0$).
From this RH characterization of the resolvent, one can also extract information about the Fredholm determinant
 $\det\left(1-\mathcal M\right)_{L^2(\Sigma_1\cup\Sigma_2)}$, as we will explain below. But first we take a closer look at the relevant RH problem.

\subsubsection*{RH problem for $U$}
\begin{itemize}
\item[(a)] If $k_{0}=+\infty$, then $U:\mathbb C\setminus\left(\Sigma_1\cup\Sigma_2\right)\to \mathbb C^{ (d+2q+1 )\times (d+2q+1 )}$ is analytic, while

\noindent  \hspace{-0.09cm} if $k_{0}<+\infty$, then $U:\mathbb C\setminus\left(\Sigma_1\cup\Sigma_2\right)\to \mathbb C^{ (d+2q+2)\times (d+2q+2 )}$ is analytic.
\item[(b)] $U_+(s)=U_-(z)J_U(z)$ for $z\in \Sigma_1\cup\Sigma_2$, with\begin{align*}
J_U(z)=\begin{pmatrix}
I_{2q+1}& \substack{\ds\begin{pmatrix}
\gamma_{0}1_{\Sigma_1}(z)z^{k_{1}}h_j(z)
\end{pmatrix}_{j=1,\ldots,d} \\
\ds \begin{pmatrix}
-\gamma_{\ell}1_{\Sigma_1}(z)z^{1+k_{2\ell}}h_j(z) \\
\gamma_{\ell}1_{\Sigma_1}(z)z^{k_{2\ell+1}}h_j(z)
\end{pmatrix}_{\substack{\ell=1,\ldots,q \\ j=1,\ldots,d}}
}
  \\
-  \left( \substack{
  \ds \begin{pmatrix}
  1_{\Sigma_2}(z)z^{-k_{1}}g_j(z)
  \end{pmatrix}_{j=1,\ldots,d}
 \\ \ds \begin{pmatrix}
1_{\Sigma_2}(z)z^{-1-k_{2\ell}}g_j(z) \\
1_{\Sigma_2}(z)z^{-k_{2\ell+1}}g_j(z)
\end{pmatrix}_{\substack{\ell=1,\ldots,q \\ j=1,\ldots,d}} } \right)^{T}
& I_d
\end{pmatrix}
\end{align*}
if $k_{0}=+\infty$, while if $k_{0}<+\infty$ we have
\begin{align}\label{JU for k0 finite}
J_U(z)=\begin{pmatrix}
I_{2q+2}& \begin{pmatrix}
-\gamma_{\ell}1_{\Sigma_1}(z)z^{1+k_{2\ell}}h_j(z) \\
\gamma_{\ell}1_{\Sigma_1}(z)z^{k_{2\ell+1}}h_j(z)
\end{pmatrix}_{\substack{\ell=0,\ldots,q \\ j=1,\ldots,d}}  \\
-\begin{pmatrix}
1_{\Sigma_2}(z)z^{-1-k_{2\ell}}g_j(z) \\
1_{\Sigma_2}(z)z^{-k_{2\ell+1}}g_j(z)
\end{pmatrix}_{\substack{ \ell=0,\ldots,q  \\ j=1,\ldots,d }}^{T} 
& I_d
\end{pmatrix}.
\end{align}
\item[(c)] If $k_{0}=+\infty$, then $U(z)=I_{d+2q+1}+\bigO(z^{-1})$ as $z\to \infty$, while

\noindent  \hspace{-0.09cm} if $k_{0}<+\infty$, then $U(z)=I_{d+2q+2}+\bigO(z^{-1})$ as $z\to \infty$.
\end{itemize}

In the special case where $d=1$, $h_1=1/g_1=W$, $q=0$ and $\mathcal I=[k,+\infty)$, we have
\begin{align*}
M(z,z')=\frac{\vec{\mathrm{g}}^T(z)\vec{\mathrm{h}}(z')}{z-z'},\qquad \vec{\mathrm{g}}(z)=\frac{1}{2\pi i}\begin{pmatrix}\gamma_{0}1_{\Sigma_1}(z)z^k\\
1_{\Sigma_2}(z)/W(z)
\end{pmatrix},\qquad \vec{\mathrm{h}}(z')=\begin{pmatrix}1_{\Sigma_2}(z')z'^{-k}\\
-1_{\Sigma_1}(z')W(z')
\end{pmatrix}.
\end{align*}
The associated RH problem is then as follows.

\subsubsection*{RH problem for $U$}
\begin{itemize}
\item[(a)] $U:\mathbb C\setminus\left(\Sigma_1\cup \Sigma_2\right)\to \mathbb C^{2\times 2}$ is analytic,
\item[(b)] $U_+(s)=U_-(z)J_U(z)$ for $z\in \Sigma_1\cup  \Sigma_2$, with\begin{align*}
J_U(z)=\begin{cases}
\begin{pmatrix}
1& \gamma_{0}z^kW(z)\\ 0& 1
\end{pmatrix},&z\in \Sigma_1,\\
\begin{pmatrix}
1& 0\\ -\frac{1}{z^{k}W(z)}
& 1
\end{pmatrix},&z\in \Sigma_2.\end{cases}
\end{align*}
\item[(c)]{As $z\to \infty$, $U(z)=I_{2}+\bigO(z^{-1})$}.
\end{itemize}

\begin{remark}
In the case of the kernel $K_{N,m}$ relevant in domino tilings of the Aztec diamond, $W$ is given by \eqref{def:W2}, and the jump matrices further specialize to
\begin{align}\label{eq:JY2}
J_U(z)=\begin{cases}
\begin{pmatrix}
1& \frac{\gamma_{0}z^{N-m+k+\epsilon-1}}{(z-a)^{N-m+\epsilon}(1+az)^m}\\ 0& 1
\end{pmatrix},&z\in \Sigma_1,\\
\begin{pmatrix}
1& 0\\ -\frac{(z-a)^{N-m+\epsilon}(1+az)^m}{z^{N-m+k+\epsilon-1}}
& 1
\end{pmatrix},&z\in \Sigma_2.\end{cases}
\end{align}
\end{remark}

Recall that $\mathcal I=\sqcup_{j=0}^q\mathcal I_j$ is the disjoint union of clusters $\mathcal I_j=[k_{2j+1},k_{2j}]\cap\mathbb Z$ of consecutive points defined by \eqref{def of I} in terms of $\vec{k}=(k_{2q+1},k_{2q},\ldots,k_{1},k_{0})$. Let $j\in\{0,\ldots, 2q+1\}$, where $j=0$ is excluded in case $k_0=+\infty$, and let $u = \lceil \frac{j-1}{2} \rceil \in \{0,\ldots,q\}$. 
Let us define $\mathcal I_{u}^*$ to be the same cluster as $\mathcal I_{u}$, but with $k_{j}$ replaced by $k_{j}+1$. If $j$ is odd, this means that $\mathcal I_{u}^*=\mathcal I_{u}\setminus\{k_j\}$; if $j$ is even, we have $\mathcal I_{u}^*=\mathcal I_{u}\cup\{k_j+1\}$. 
In order to avoid overlaps, we need to assume in this construction that $k_j+1<k_{j-1}$ if $j\neq 0$ is even. (We do not need to avoid empty intervals, so the case $k_{j}=k_{j-1}$ for some odd $j$ is allowed.) For ease of notation, we also denote $\mathcal I_{v}^*=\mathcal I_{v}$ for $v\in \{0,\ldots,q\}$, $v\neq u$.

In the statement of the following proposition, we will write $U(z;\vec{k})$ instead of $U(z)$ for clarity. If $k_{0}<+\infty$, then we also adopt the convention to start row and column indices at $0$ instead of $1$, i.e. $U_{00}$ is the entry of $U$ in the first row and first column.
\begin{theorem}\label{thm:multi gap ratio}
\noindent
Let $j\in \{0,\ldots,2q+1\}$, where $j=0$ is excluded in case $k_0=+\infty$. If $j\neq 0$ is even, we further assume that $k_j+1<k_{j-1}$. Let $\mathcal I_{\ell}^*$ be defined as above, $\ell \in \{0,\ldots,q\}$.
For any $\gamma_0,\ldots, \gamma_q \in \C$ such that $\det\left(1-\sum_{\ell=0}^q\gamma_\ell 1_{\mathcal{I}_\ell}\mathcal K\right)\neq 0$, we have the identity
\begin{align}\label{eq:ratioid multi gap}
\frac{\det(1-\sum_{\ell=0}^q\gamma_\ell 1_{\mathcal{I}_{\ell}^*}\mathcal K)_{\ell^2(\mathbb Z)}}{\det\left(1-\sum_{\ell=0}^q\gamma_\ell 1_{\mathcal{I}_\ell}\mathcal K\right)_{\ell^2(\mathbb Z)}}=U_{jj}(0;\vec{k}).
\end{align}
In particular, if $q=0$, $k_{0}=+\infty$ and $k_{1}=k\in \Z$,
\begin{align}\label{eq:ratioid}
\frac{\det\left(1-\gamma_{0}1_{\{k+1,k+2,\ldots\}}\mathcal K\right)_{\ell^2(\mathbb Z)}}{\det\left(1-\gamma_{0}1_{\{k,k+1,\ldots\}}\mathcal K\right)_{\ell^2(\mathbb Z)}}=U_{11}(0;(k_1=k,k_0=\infty)).
\end{align}
\end{theorem}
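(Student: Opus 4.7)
The plan is to work on the $L^{2}(\Sigma_1 \cup \Sigma_2)$ side of Proposition \ref{prop:integrable} and to show that replacing $k_j$ by $k_j+1$ produces a rank-one perturbation of $\mathcal{M}$, whose effect on the Fredholm determinant can be read off from the RH problem for $U$ at $z=0$. The whole argument will be essentially algebraic once IIKS is in place.

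First I would compute $\mathcal{T} := \mathcal{M}^{*} - \mathcal{M}$, where $\mathcal{M}^{*}$ is the integrable operator associated with the shifted vector $\vec k^{*}$. Inspecting \eqref{eq:Mint}-\eqref{eq:Mint mod}, the shift $k_j \mapsto k_j+1$ multiplies $\vec{\mathrm{g}}_j$ by $z$ and $\vec{\mathrm{h}}_j$ by $z^{-1}$ and leaves the other components of $\vec{\mathrm{g}},\vec{\mathrm{h}}$ unchanged. Using $(zz'^{-1}-1)/(z-z') = 1/z'$, a direct calculation gives the rank-one kernel
\begin{equation*}
\mathcal{T}(z,z') \;=\; \vec{\mathrm{g}}_j(z)\,\frac{\vec{\mathrm{h}}_j(z')}{z'}.
\end{equation*}
Combined with the factorization $1-\mathcal{M}^{*} = (1-\mathcal{M})\bigl(1 - (1-\mathcal{M})^{-1}\mathcal{T}\bigr)$ and the formula $\det(1-\mathcal{A}) = 1 - \mathrm{tr}(\mathcal{A})$ for rank-one $\mathcal{A}$, this gives
\begin{equation*}
\frac{\det(1-\mathcal{M}^{*})}{\det(1-\mathcal{M})} \;=\; 1 \;-\; \int_{\Sigma_1 \cup \Sigma_2}\frac{\vec{\mathrm{h}}_j(w)}{w}\bigl[(1-\mathcal{M})^{-1}\vec{\mathrm{g}}_j\bigr](w)\,dw.
\end{equation*}

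The next step is to recognize the right-hand side as $U_{jj}(0;\vec k)$ via the IIKS theory. Applied to $\mathcal{M}$, IIKS gives that the resolvent $(1-\mathcal{M})^{-1}\mathcal{M}$ has an integrable kernel of the form $\vec{\mathrm{F}}^{T}(z)\vec{\mathrm{H}}(w)/(z-w)$ with $\vec{\mathrm{F}}(w) = U_{\pm}(w)\vec{\mathrm{g}}(w)$ on the contour (the two boundary values coincide because $\vec{\mathrm{h}}^{T}\vec{\mathrm{g}}\equiv 0$ pointwise on $\Sigma_1 \cup \Sigma_2$). In particular $(1-\mathcal{M})^{-1}\vec{\mathrm{g}}_j = \vec{\mathrm{F}}_j$, so the integral above equals $\int_{\Sigma_1 \cup \Sigma_2} w^{-1}\vec{\mathrm{F}}_j(w)\vec{\mathrm{h}}_j(w)\,dw$. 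On the other hand, Plemelj-Sokhotski applied to $U$, normalized by $U(\infty)=I$ with jump $J_U - I = -2\pi i\,\vec{\mathrm{g}}\vec{\mathrm{h}}^{T}$, gives
\begin{equation*}
U(z) \;=\; I \;-\; \int_{\Sigma_1 \cup \Sigma_2}\frac{\vec{\mathrm{F}}(w)\vec{\mathrm{h}}^{T}(w)}{w-z}\,dw,
\end{equation*}
so that the $(j,j)$ entry at $z=0$ reads $U_{jj}(0;\vec k) = 1 - \int_{\Sigma_1 \cup \Sigma_2} w^{-1}\vec{\mathrm{F}}_j(w)\vec{\mathrm{h}}_j(w)\,dw$. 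Combining the two identities yields $\det(1-\mathcal{M}^{*})/\det(1-\mathcal{M}) = U_{jj}(0;\vec k)$, which is \eqref{eq:ratioid multi gap}; the special case \eqref{eq:ratioid} then follows by taking $q=0$, $k_0=+\infty$, $k_1=k$.

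The main technical point will be justifying the IIKS identity $\vec{\mathrm{F}} = U_{\pm}\vec{\mathrm{g}}$ in our block-triangular setting, where the components of $\vec{\mathrm{g}}$ and $\vec{\mathrm{h}}$ are supported on disjoint contours $\Sigma_1$ and $\Sigma_2$ and $U$ has jumps on both; this is where the standard IIKS argument needs a careful transcription, but it goes through because $\vec{\mathrm{h}}^{T}\vec{\mathrm{g}}\equiv 0$ pointwise. The parity assumption $k_j + 1 < k_{j-1}$ (for $j$ even, $j\neq 0$) is purely combinatorial: it ensures that the clusters $\mathcal{I}_\ell^{*}$ stay disjoint after the shift, so that Proposition \ref{prop:integrable} applies uniformly to both numerator and denominator.
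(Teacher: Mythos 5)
Your proof is correct and follows essentially the same route as the paper's: pass to the $L^2(\Sigma_1\cup\Sigma_2)$ side via Proposition \ref{prop:integrable}, observe that the shift $k_j\mapsto k_j+1$ produces the rank-one perturbation with kernel $\vec{\mathrm{g}}_j(z)\vec{\mathrm{h}}_j(z')/z'$, apply the matrix determinant lemma, and invoke the IIKS identity $(1-\mathcal M)^{-1}\vec{\mathrm{g}}=U_{\pm}\vec{\mathrm{g}}$ from \cite{DeiftItsZhou}. The only (immaterial) difference is the last step: the paper identifies the resulting integral with $U_{jj}(0;\vec{k})$ by rewriting the integrand as the additive jump of $U_{jj}$ across $\Sigma_2$ and deforming contours, whereas you read it off directly from the Cauchy-integral representation $U(z)=I-\int_{\Sigma_1\cup\Sigma_2}\vec{\mathrm{F}}(w)\vec{\mathrm{h}}^{T}(w)(w-z)^{-1}dw$ evaluated at $z=0$, which is an equivalent piece of the same IIKS machinery.
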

\begin{proof}
Let $\vec{k}=(k_{2q+1},k_{2q},\ldots,k_{1},k_{0})$, and let $\vec{k}^{(j)}$ denote the vector obtained by replacing $k_{j}$ by $k_{j}+1$ in $\vec{k}$. Let us write $M_{\vec{k}}$ for the kernel \eqref{eq:Mint}--\eqref{eq:Mint mod}, where the dependence of $M$ on $\vec{k}$ has been made explicit.
Proposition \ref{prop:integrable} implies that
\begin{align}\label{lol1}
\frac{\det(1-\sum_{\ell=0}^q\gamma_\ell 1_{\mathcal{I}_{\ell}^*}\mathcal K)_{\ell^2(\mathbb Z)}}{\det\left(1-\sum_{\ell=0}^q\gamma_\ell 1_{\mathcal{I}_{\ell}}\mathcal K\right)_{\ell^2(\mathbb Z)}} = \frac{\det(1-M_{\vec{k}^{(j)}})_{L^2(\Sigma_1\cup\Sigma_2)}}{\det(1-M_{\vec{k}})_{L^2(\Sigma_1\cup\Sigma_2)}}.
\end{align}
Now, note that
\begin{align*}
\Delta(z,z'):=M_{\vec{k}^{(j)}}(z,z')-M_{\vec{k}}(z,z')= \begin{cases} 
\ds \frac{\gamma_\frac{j-1}{2}}{2\pi i z'}1_{\Sigma_{1}}(z)1_{\Sigma_{2}}(z')\left(\frac{z}{z'}\right)^{k_{j}}, & \mbox{if } j \mbox{ is odd}, \\[0.2cm]
\ds \frac{-\gamma_{\frac{j}{2}}}{2\pi i z'}1_{\Sigma_{1}}(z)1_{\Sigma_{2}}(z')\left(\frac{z}{z'}\right)^{k_{j}+1}, & \mbox{if } j \mbox{ is even}.
\end{cases}
\end{align*}
This is the kernel of the rank one operator 
\[\Delta[f](z)=\vec{\mathrm{g}}_j(z)\int_{\Sigma_{2}}f(z')\vec{\mathrm{h}}_j(z')\frac{dz'}{z'}.\]
(If $k_{0}<+\infty$, then we again start indices at $0$, i.e. the first element of $\vec{\mathrm{h}}$ is denoted by $\vec{\mathrm{h}}_{0}$, etc, while if $k_{0}=+\infty$, then we start indices at $1$, i.e. the first element of $\vec{\mathrm{h}}$ is denoted by $\vec{\mathrm{h}}_{1}$. In particular, the notation $\vec{\mathrm{h}}_{j}$ should not be confused with $h_{j}$ from \eqref{def:G}. Similarly, $\vec{\mathrm{g}}_{j}$ should not be confused with $g_{j}$.)
By the matrix determinant lemma, we have
\begin{align*}
\det(1-M_{\vec{k}^{(j)}})_{L^2(\Sigma_1\cup\Sigma_2)}&=\det(1-M_{\vec{k}}-\Delta)_{L^2(\Sigma_1\cup\Sigma_2)}\\
&=
\det\left(1-M_{\vec{k}}\right)_{L^2(\Sigma_1\cup\Sigma_2)}\ \left(1- \int_{\Sigma_{2}}\frac{\vec{\mathrm{h}}_j(z)}{z}(1-M_{\vec{k}})^{-1}[\vec{\mathrm{g}}_j](z)dz\right).
\end{align*}
We can relate $(1-M_{\vec{k}})^{-1}[\vec{\mathrm{g}}_j](z)$ to the RH solution $U$. By \cite[Lemma 2.12]{DeiftItsZhou}, we have that
\begin{align*}
(1-M_{\vec{k}})^{-1}[\vec{\mathrm{g}}_j](z) & = \left(U_+(z)\vec{\mathrm{g}}(z)\right)_{j} = U_{j0,+}(z)\vec{\mathrm{g}}_0(z)+...+U_{(j,2q+1),+}(z)\vec{
\mathrm{g}}_{2q+1}(z)\\
& +U_{(j,2q+2),+}(z)\vec{\mathrm{g}}_{2q+2}(z) + \cdots + U_{(j,2q+1+d),+}(z)\vec{\mathrm{g}}_{2q+1+d}(z),\qquad z\in\Sigma_{2},
\end{align*}
where $U_{j0,+}(z):=0$ and $\vec{\mathrm{g}}_0(z):=0$ if $k_{0}=+\infty$. Since $\vec{\mathrm{g}}_0,\ldots, \vec{\mathrm{g}}_{2q+1}$ vanish on $\Sigma_{2}$, we obtain
\begin{align*}
\frac{\det(1-M_{\vec{k}^{(j)}})_{L^2(\Sigma_1\cup\Sigma_2)}}{\det(1-M_{\vec{k}})_{L^2(\Sigma_1\cup\Sigma_2)}}&=
1-\int_{\Sigma_{2}}\frac{\vec{\mathrm{h}}_j(z)}{z} \sum_{k=2q+2}^{2q+1+d} U_{jk,+}(z)\vec{\mathrm{g}}_{k}(z) dz.
\end{align*}
Now we observe that, by the jump relation for $U$,  
\begin{align*}
& \left(U_+(z)-U_-(z)\right)_{jj}=\left(U_+(z)(I-J_U(z)^{-1})\right)_{jj} = - 2\pi i \left(U_+(z)\vec{\mathrm{g}}(z) \vec{\mathrm{h}}^T(z)\right)_{jj} \\
& = - 2\pi i \vec{\mathrm{h}}_j(z) \sum_{k=2q+2}^{2q+1+d} U_{jk,+}(z)\vec{\mathrm{g}}_{k}(z), \qquad z\in\Sigma_{2}. 
\end{align*}
Consequently,
\begin{align*}
\frac{\det(1-M_{\vec{k}^{(j)}})_{L^2(\Sigma_1\cup\Sigma_2)}}{\det(1-M_{\vec{k}})_{L^2(\Sigma_1\cup\Sigma_2)}} & = 
1+\frac{1}{2\pi i}\int_{\Sigma_{2}}U_{jj,+}(z)\frac{dz}{z}
-\frac{1}{2\pi i}\int_{\Sigma_{2}}U_{jj,-}(z)\frac{dz}{z}
.
\end{align*}
Since $U_-$ can be analytically continued to the region outside $\Sigma_{2}$ ($U$ is the analytic continuation), the latter integral can be deformed to a large circle, and is equal to $-1$ by the residue theorem, since $\lim_{z\to\infty}U(z)=I$.
For the former integral, we have by analytic continuation of $U_+(z)$ to the region between $\Sigma_{2}$ and $\Sigma_{1}$ that
\[\frac{1}{2\pi i}\int_{\Sigma_{2}}U_{jj,+}(z)\frac{dz}{z}=\frac{1}{2\pi i}\int_{\Sigma_{1}}U_{jj,-}(z)\frac{dz}{z}=\frac{1}{2\pi i}\int_{\Sigma_{1}}U_{jj,+}(z)\frac{dz}{z}.\]
Here we used the jump relation of $U$ on $\Sigma_{1}$ for the last equality.
Now, $U_+$ can be analytically continued to the whole inner region of $\Sigma_{1}$, so by computing the residue, we obtain
\[\frac{1}{2\pi i}\int_{\Sigma_{2}}U_{jj,+}(z)\frac{dz}{z}=U_{jj}(0).\]
Combining the above results for the two integrals, we finally obtain
\begin{align*}
\frac{\det(1-M_{\vec{k}^{(j)}})_{L^2(\Sigma_1\cup\Sigma_2)}}{\det(1-M_{\vec{k}})_{L^2(\Sigma_1\cup\Sigma_2)}}&=
U_{jj}(0),
\end{align*}
and the claim follows by combining the above with \eqref{lol1}.
\end{proof}

\begin{remark}
Note that we did not use regularity properties (such as analyticity) of $g_{1},\ldots,g_{d},$ $h_{1},\ldots,h_{d}$. 
\end{remark}
\begin{remark}
We have a trivial degenerate case at our disposal: $h_{1}=\ldots=h_{d}=0$ on $\Sigma_1$ and $g_{1}=\ldots=g_{d}=0$ on $\Sigma_{2}$. The kernel is then identically equal to $0$ and the RH solution $U$ is identically equal to $I_{d+2q+1}$ (if $k_{0}=+\infty$) or $I_{d+2q+2}$ (if $k_{0}<+\infty$). The identity \eqref{eq:ratioid multi gap} reads $1=1$ and gives us a modest sanity check. Moreover, we also checked \eqref{eq:ratioid multi gap} numerically in the case of the kernels corresponding to domino tilings of Aztec diamonds and lozenge tilings of hexagons, for various values of the parameters (see also Remark \ref{remark:numerical check}).
\end{remark}

Proposition \ref{prop:dominoRH}
is a special case of Theorem \ref{thm:multi gap ratio}, corresponding to $\gamma_0=1$, $\mathcal I=[k,+\infty)\cap \Z$ and $G(u,v)=W(v)/W(u)$ with $W$ given by \eqref{def:W}.

Let us now discuss some further cases of interest, in the general setting where $K$ is of the form \eqref{eq:def K}, assuming in addition that $K$ is the kernel of a determinantal point process $\mathcal X$ on $\mathbb Z$. Then, the Fredholm determinants under consideration have the following probabilistic interpretations.
\begin{enumerate}
\item The determinant
$\det\left(1- 1_{\{k,k+1,\ldots\}}\mathcal K\right)_{\ell^2(\mathbb Z)}$, corresponding to $q=0, k_0=+\infty, \gamma_0=1$, is the probability that the largest particle in the determinantal point process is strictly less than $k$. Equivalently, it is the \textit{gap probability} of the set $\{k,k+1,\ldots\}$, i.e. it is the probability that no points in the determinantal point process lie on $\{k,k+1,\ldots\}$. By Theorem \ref{thm:multi gap ratio}, we can express ratios of such gap probabilities in terms of a $(d+1)\times (d+1)$ RH problem.

Obtaining precise asymptotics for gap probabilities as the size of the gap gets large, often referred to as \textit{large gap asymptotics}, is in general difficult. While there is a vast literature on large gap asymptotics for point processes associated with random matrix eigenvalues (see e.g. \cite{BP2024, F review gap, Kras review} for some reviews), in the case of tiling models this remains an outstanding problem, which we intend to address in a forthcoming publication.
\item The determinant $\det(1-\sum_{j=0}^q 1_{\mathcal I_j}\mathcal K)_{\ell^2(\mathbb Z)}$, corresponding to general $q$ but with $\gamma_0=\cdots=\gamma_q=1$, is the gap probability of the set $\mathcal I=\sqcup_{j=0}^q\mathcal I_j$. This quantity can be computed by taking suitable telescoping products of the quotients appearing in Theorem \ref{thm:multi gap ratio}.
\item The determinant $\det(1-\sum_{j=0}^q\gamma_j 1_{\mathcal I_j}\mathcal K)_{\ell^2(\mathbb Z)}$ with $\gamma_0,\ldots,\gamma_q\in[0,1]$ is the probability that a random thinning of the determinantal point process $\mathcal X$ has no particles in $\mathcal I$. This random thinning is constructed by removing each particle of $\mathcal X$ in $\mathcal I_j$ independently with probability $1-\gamma_j$. The same determinant also gives access to joint probability distributions of numbers of particles in the sets $\mathcal I_0,\ldots, \mathcal I_q$. See e.g. \cite[Section 2]{CD2018} and \cite[Section 1.2]{C sine} for more details. Again, such quantities can be computed by taking telescoping products of the quotients appearing in Theorem \ref{thm:multi gap ratio}.
\item
A particularly interesting case occurs for $q=1$, $k_0=+\infty$, $k_1=k$, $\gamma_0=\gamma_1=1$ and $k_2=k_3=n$, for some integers $n<k$.
If $\det\left(1- 1_{\{k,k+1,\ldots\}}\mathcal K\right)_{\ell^2(\mathbb Z)}>0$, which is equivalent to the fact that the probability that the largest particle in $\mathcal X$ is smaller than $k$ is non-zero, we have that $1- 1_{\{k,k+1,\ldots\}}\mathcal K$ is invertible such that
\begin{multline*}
\det\left(1- 1_{\{k,k+1,\ldots\}}\mathcal K-1_{\{n\}}\mathcal K\right)_{\ell^2(\mathbb Z)}\\=\det\big(1- 1_{\{n\}}\mathcal K(1- 1_{\{k,k+1,\ldots\}}\mathcal K)^{-1}\big)_{\ell^2(\mathbb Z)}\det\left(1- 1_{\{k,k+1,\ldots\}}\mathcal K\right)_{\ell^2(\mathbb Z)}.
\end{multline*}
As a consequence, writing $\mathcal R=\mathcal K\left(1- 1_{\{k,k+1,\ldots\}}\mathcal K\right)^{-1}$, we obtain
\begin{equation}\label{lol11}
\frac{\det\left(1- 1_{\{k,k+1,\ldots\}}\mathcal K-1_{\{n\}}\mathcal K\right)_{\ell^2(\mathbb Z)}}{\det\left(1- 1_{\{k,k+1,\ldots\}}\mathcal K\right)_{\ell^2(\mathbb Z)}} 
=\det\left(1- 1_{\{n\}}\mathcal R\right)_{\ell^2(\mathbb Z)}=1-R(n,n),
\end{equation}
where $R$ is the kernel of $\mathcal R$. This kernel $R(n,n')$ is the kernel of the determinantal point process $\mathcal X$, conditioned on the event that the largest particle is smaller than $k$ \cite{CG2023}. The identity \eqref{lol11}, combined with Theorem \ref{thm:multi gap ratio}, implies that we can express the one-point function $R(n,n)$ of this conditional determinantal point process in terms of a $(d+3)\times (d+3)$ RH problem.
\item For $q=0$ and $k_0=k_1=k$, we have $\mathcal I=\{k\}$. Then, as an easy consequence of the Fredholm series, we have the identity $\det(1-1_{\{k\}}\mathcal K)_{\ell^2(\mathbb Z)}=1-K(k,k)$
such that we have access to the kernel $K$ on the diagonal through the associated $(d+2)\times (d+2)$ RH problem for $U$. In cases where the double contour integral \eqref{eq:def K} is difficult to analyze (like for uniform hexagon tilings and also for doubly periodic tilings of Aztec diamonds \cite{DK2021}), it may be easier to analyze the kernel via this $(d+2)\times (d+2)$ RH problem, as this avoids to analyze a double contour integral.
\item As an alternative way to approach the kernel $K(k,k)$, we  can take $q=0$ with $k_0=+\infty$: by the identity $\lim_{\gamma_{0}\to 0}\frac{-1}{\gamma_{0}}\log\det\left(1-\gamma_{0} 1_{\{k,k+1,\ldots\}}\mathcal K\right)_{\ell^2(\mathbb Z)}=\sum_{j=k}^\infty K(j,j)$, we obtain that
\begin{align*}
K(k,k)=\lim_{\gamma_{0}\to 0}\frac{-1}{\gamma_{0}}\log\frac{\det\left(1-\gamma_{0} 1_{\{k,k+1,\ldots\}}\mathcal K\right)_{\ell^2(\mathbb Z)}}{\det\left(1-\gamma_{0} 1_{\{k+1, k+2\ldots\}}\mathcal K\right)_{\ell^2(\mathbb Z)}}=\lim_{\gamma_{0}\to 0}\frac{\log U_{11}(0;(k,+\infty))}{\gamma_{0}},
\end{align*}
such that we characterize $K(k,k)$ in terms of a (smaller size) $(d+1)\times (d+1)$ RH problem, at the price of having to take an additional $\gamma_0\to 0$ limit. 
\end{enumerate}

\section{Domino tilings of reduced Aztec diamonds}\label{section:tiling}

We now study in detail the RH problem for $U$ in the situation corresponding to the Aztec diamond with weight $a$ on the vertical dominoes and weight $1$ on the horizontal dominoes, i.e.\ when $W$ is given by \eqref{def:W2}.
First, we explicitly construct the first row of the RH solution $U$ in the one-gap case $\mathcal I=[k,+\infty)\cap \Z$. Afterwards, we do the same for $\mathcal I$ consisting of any finite number of intervals (here and in what follows, by \textit{intervals}, we mean \textit{clusters of consecutive integers}). Here, we assume that all intervals lie on the same level $r=2m-\epsilon$; we hope to generalize our method to the case of multiple levels in future research.

\subsection{One-gap case: proof of Theorem \ref{theorem:tilings}}\label{subsec:proof of main thm Aztec}
We let $N\in\mathbb N$, $m\in\{1,\ldots, N\}$, $\epsilon\in \{0,1\}$, $a\in (0,1]$, $\gamma=1$, $k\in\{1,\ldots, m\}$ and consider the following RH problem.
\subsubsection*{RH problem for $U$}
\begin{itemize}
\item[(a)] $U:\mathbb C\setminus\left(\Sigma_1\cup\Sigma_2\right)\to \mathbb C^{2\times 2}$ is analytic,
where $\Sigma_1$ encircles $a$, and $\Sigma_2$ encircles $0$ and $\Sigma_1$.
\item[(b)] $U_+(z)=U_-(z)J_U(z)$ for $z\in \Sigma_1\cup\Sigma_2$, with $J_U$ as in \eqref{eq:JY2}.
\item[(c)]{As $z\to \infty$, $U(z) = I +\bigO(z^{-1})$}.
\end{itemize}
Denote
\begin{align*}
V(z) = \frac{(z-a)^{N-m+\epsilon}(1+az)^m}{z^{N-m+k{+\epsilon-1}}},
\end{align*}
such that $J_U=\begin{pmatrix}1&1/V\\0&1\end{pmatrix}$ on $\Sigma_1$, and $J_U=\begin{pmatrix}1&0\\-V&1\end{pmatrix}$ on $\Sigma_2$.
Since $V$ is analytic in $\mathbb C\setminus\{0\}$, we can deform the jump contour $\Sigma_2$. For instance, by analytic continuation of $U$ from the region between $\Sigma_1$ and $\Sigma_2$ to the region outside $\Sigma_2$, we can enlarge $\Sigma_2$. 
Since $1/V$ is analytic in $\mathbb C\setminus\{-1/a,a\}$, we can in the same way deform $\Sigma_1$ to a small contour around $a$.

Propositions \ref{prop:dominoFredholm 2} and \ref{prop:integrable}, together with Theorem \ref{thm:multi gap ratio}, imply that 
\begin{align}\label{lol2}
\frac{F^{m,k+1,\epsilon}_{N}(a)}{F^{m,k,\epsilon}_{N}(a)} = U_{11}(0),
\end{align}
where $U(\cdot)=U(\cdot;(k_1=k,k_0=\infty))$ is of size $2\times 2$.

The RH problem for $U$ gives conditions for each row of $U$ independently. In view of \eqref{lol2}, we only need the first row.
Let us denote $U_1, U_2$ for the analytic continuations of $U_{11}, U_{12}$ from the region between $\Sigma_1$ and $\Sigma_2$ to $\mathbb C\setminus\{a\}$.
This means more precisely that
\[
U_1(z)=
\begin{cases}U_{11}(z)&\mbox{in the region inside $\Sigma_2$,}\\
U_{11}(z)-V(z)U_{12}(z)&\mbox{in the region outside $\Sigma_2$,}
\end{cases}
\]
and that
\[
U_2(z)=
\begin{cases}U_{12}(z)&\mbox{in the region outside $\Sigma_1$,}\\
U_{12}(z)-\frac{1}{V(z)}U_{11}(z)&\mbox{in the region inside $\Sigma_1$.}
\end{cases}
\]

Translating the RH conditions in terms of $U_1,U_2$, we obtain that 
\begin{align}
& U_{2}(z) = \bigO(z^{-1}), \quad \mbox{as } z \to \infty, & & U_{1}(z) =\bigO(1), \quad \mbox{as } z \to a, \label{U1 U2 1} \\
& U_{1}(z)+V(z)U_2(z) = 1+\bigO(z^{-1}), \quad \mbox{as } z \to \infty, & & U_{2}(z) + \frac{1}{V(z)}U_{1}(z) = \bigO(1), \quad \mbox{as } z \to a. \label{U1 U2 2}
\end{align}
In particular, from the first relation in \eqref{U1 U2 1}, the first in \eqref{U1 U2 2}, and the fact that $V(z) = \bigO(z^{m-k+1})$ as $z\to \infty$, we see that $U_1$ is entire and of order $\bigO(z^{m-k})$ at infinity, so $U_1:=\tilde q_{m-k}$ is a polynomial of degree at most $m-k$.
Multiplying the first relation in \eqref{U1 U2 2} by $\frac{z^{N-m+k-1+\epsilon}}{(z-a)^{N-m+\epsilon}}$, we also see that
\[\tilde P_{k-1}(z):=(1+az)^mU_2(z)+\frac{z^{N-m+k-1+\epsilon}}{(z-a)^{N-m+\epsilon}}\tilde q_{m-k}(z)\] is $z^{k-1}+\bigO(z^{k-2})$ as $z\to\infty$, and moreover it has a removable singularity at $z=a$ because of the second relation in \eqref{U1 U2 2}.
Hence it
is a monic polynomial of degree $k-1$.
But then 
\[\tilde P_{k-1}(z)-\frac{z^{N-m+k-1+\epsilon}}{(z-a)^{N-m+\epsilon}}\tilde q_{m-k}(z)=\bigO((1+az)^m),\qquad \mbox{as } z\to -\tfrac{1}{a}.\]
This is a linear system of $m$ equations for the $(k-1)+(m-k+1)=m$ coefficients of $\tilde P_{k-1}$ and $\tilde q_{m-k}$.
It means that the rational function $\tilde P_{k-1}/\tilde q_{m-k}$ approximates the (also rational) function 
$\tilde f(z):=\frac{z^{N-m+k-1+\epsilon}}{(z-a)^{N-m+\epsilon}}$ up to order $\bigO((1+az)^m)$ as $z\to -\frac{1}{a}$. Such rational approximations are called Pad\'e approximants: more precisely, $\tilde P_{k-1}/\tilde q_{m-k}$ is the type $[k-1,m-k]$ Pad\'e approximant of $\tilde f$ \cite{VanAssche}.
To further simplify this problem, we can make a change of variables $z=1/\zeta$, such that \[\tilde P_{k-1}(1/\zeta)-\frac{\zeta^{-k+1}}{(1-a\zeta)^{N-m+\epsilon}}\tilde q_{m-k}(1/\zeta)=\bigO((\zeta+a)^m),\qquad \zeta\to -a.\]
Multiplying by $-(1-a\zeta)^{N-m+\epsilon}\zeta^{m-1}$, we find
\[p_{m-k}(\zeta)-\zeta^{m-k}(1-a\zeta)^{N-m+{\epsilon}}q_{k-1}(\zeta)=\bigO((\zeta+a)^m),\qquad \zeta\to -a,\]
where 
\[q_{k-1}(\zeta)=\zeta^{k-1}\tilde P_{k-1}(1/\zeta),\qquad p_{m-k}(\zeta)=\zeta^{m-k}\tilde q_{m-k}(1/\zeta),\]
such that $q_{k-1}$ is of degree $k-1$ and $p_{m-k}$ is of degree $m-k$. 
In other words, $p_{m-k}/q_{k-1}$ is the type $[m-k,k-1]$ Pad\'e approximant of the degree $N-k+\epsilon$ polynomial $f(\zeta)=\zeta^{m-k}(1-a\zeta)^{N-m+\epsilon}$, normalized such that $q_{k-1}(0)=1$.
We can recover $U_{11}(0)$ by the formula
\[U_{11}(0)=U_1(0)=\tilde q_{m-k}(0)=\lim_{\zeta\to\infty}{\zeta^{k-m}p_{m-k}(\zeta)}=:\kappa_{m-k}^{N,k,m},\]
which is the leading coefficient of $p_{m-k}$ at infinity. Observe that the uniqueness and existence of the RH solution $U$ implies the existence and uniqueness of the above Pad\'e approximants.
The above construction proves Proposition \ref{prop:dominoPade}. Taking $\epsilon=1$, Theorem \ref{theorem:tilings} is now an immediate consequence of Propositions \ref{prop:dominoFredholm}, \ref{prop:dominointegrable}, \ref{prop:dominoRH}, and \ref{prop:dominoPade}.

For general values of $\epsilon \in \{0,1\}$, the above proves Theorem \ref{theorem:tilingmulti} (see below) with $q=0$.

\subsection{Multi-gap case}
In this subsection, we let $N\in\mathbb N$, $m\in\{1,\ldots, N\}$, $\epsilon\in \{0,1\}$, $a\in (0,1]$, $q\in \N$, and $\gamma_{0}=\ldots=\gamma_{d}=1$. (The content of this subsection can easily be generalized for general values of $\gamma_{0},\ldots,\gamma_{d}$; we set $\gamma_{0}=\ldots=\gamma_{d}=1$ for simplicity.)

We consider a subset $\mathcal I$ of $\mathbb Z$ which is bounded below and consists of $q+1$ clusters of consecutive points. We restrict ourselves here to the case $k_0=+\infty$, so 
$\mathcal I=\sqcup_{j=0}^q\mathcal I_j$ where $\mathcal I_j=[k_{2j+1},k_{2j}]\cap\mathbb Z$,
with $k_{2j}<k_{2j-1}$ for $j=1,\ldots, q$, and $k_0=+\infty$. We also assume that  $k_1\leq m$, $k_{2q+1}\geq m-N-\epsilon+1$, and that
\begin{align}\label{conditionmultigap}
m-1 \geq (m-k_{1})+\sum_{j=1}^{q}(k_{2j}+1-k_{2j+1}), \quad \mbox{or equivalently} \quad k_{1} \geq 1+ \sum_{j=1}^{q}(k_{2j}+1-k_{2j+1}).
\end{align}
The latter condition is needed to have a non-zero gap probability; if it is not valid, there would be less than $N$ admissible sites in the corresponding point process (recall Remark \ref{remark:notilings}). The RH problem for $U$ would then not be uniquely solvable.
For later use, we write
\begin{align}\label{def of Ical prim}
\mathcal I'=\mathcal I\cap\{1,2,\ldots, m\}=\left(\sqcup_{j=0}^q[k_{2j+1},k_{2j}]\right)\cap\{1,2,\ldots,m\},
\end{align}
and
\begin{align}\label{def of Jcal}
\mathcal J=\cup_{d=1}^q\{k_{2d}+1,\ldots, k_{2d-1}-1\}=\{k_{2q}+1,\ldots, k_1-1\}\setminus\mathcal I'.
\end{align}
These sets will be used in \eqref{expr1} and in the analysis following \eqref{expr1}.
We also denote $A_{N}^{m,\mathcal I}$ and $\tilde A_{N}^{m,\mathcal I}$ for the reduced Aztec diamonds, obtained by assuming that there are no particles in $\mathcal I$ at level $2m-\epsilon$. In analogy with \eqref{def of A tilde}, the notation $A_{N}^{m,\mathcal I}$ corresponds to $\epsilon=1$ and $\tilde A_{N}^{m,\mathcal I}$ to $\epsilon=0$. We further define
\[F_N^{m,\mathcal I}(a)=\sum_{T\in\mathcal T(A_N^{m,\mathcal I})}a^{v(T)},\qquad \tilde F_N^{m,\mathcal I}(a)=\sum_{T\in\mathcal T(\tilde A_N^{m,\mathcal I})}a^{v(T)}\]
for the generating functions of vertical dominoes in tilings of these domains.

The RH problem corresponding to this case, in which we denote 
\begin{align}\label{def of Vj}
{V}_{j}(z) =  
\frac{(z-a)^{N-m+{\epsilon}}(1+az)^m}{z^{N-m{+\epsilon-1}+k_{j}+\epsilon_j}},\qquad \epsilon_j=\begin{cases}0&\mbox{ if $j$ is odd},\\
1&\mbox{ if $j$ is even,}\end{cases}
\end{align}
for simplicity, is the following.
\subsubsection*{RH problem for $U$}
\begin{itemize}
\item[(a)] ${U}:\mathbb C\setminus(\Sigma_1\cup\Sigma_2)\to \mathbb C^{(2q+2)\times (2q+2)}$ is analytic, where $\Sigma_1$ encircles $a$ but not $-\frac{1}{a}$, and $\Sigma_2$ encircles $0$ and $\Sigma_1$.
\item[(b)] ${U}_+(z)={U}_-(z)J_{{U}}(z)$ for $z\in \Sigma_1\cup\Sigma_2$, with \begin{align*}
& J_{U} = 
\begin{pmatrix}
1 & 0 & 0 & \cdots & 0 & 0 & 1/V_1 \\ 
0 & 1 & 0 & \cdots & 0 & 0 & -1/V_2 \\ 
0 & 0 & 1 & \cdots & 0 & 0 & 1/V_3 \\ 
\vdots & \vdots & \vdots & \ddots & \vdots & \vdots & \vdots \\
0 & 0 & 0 & \cdots & 1 & 0 & -1/V_{2q} \\ 
0 & 0 & 0 & \cdots & 0 & 1 & 1/V_{2q+1} \\ 
0 & 0 & 0 & \cdots & 0 & 0 & 1
\end{pmatrix}, & & \mbox{on }\Sigma_1, \\
& J_{U}(z) = \begin{pmatrix}
1 & 0 & 0 & \cdots & 0 & 0 & 0 \\ 
0 & 1 & 0 & \cdots & 0 & 0 & 0 \\ 
0 & 0 & 1 & \cdots & 0 & 0 & 0 \\ 
\vdots & \vdots & \vdots & \ddots & \vdots & \vdots & \vdots \\
0 & 0 & 0 & \cdots & 1 & 0 & 0 \\ 
0 & 0 & 0 & \cdots & 0 & 1 & 0 \\ 
-V_1 & -V_2 & -V_3 & \cdots & -V_{2q} & -V_{2q+1} & 1
\end{pmatrix}, & & \mbox{on }\Sigma_2. 
\end{align*}
\item[(c)]{As $z\to \infty$, ${U}(z) = I +\bigO(z^{-1})$}.
\end{itemize}

Since $V_1,\ldots, V_{2q+1}$ are analytic outside $\Sigma_2$, we can deform $\Sigma_2$ to an arbitrarily large contour, and thus extend $U$ analytically from the region in between $\Sigma_1$ and $\Sigma_2$ to the whole exterior of $\Sigma_2$. Similarly, since $1/V_1,\ldots,1/V_{2q+1}$ are analytic inside $\Sigma_1$ except at $a$, we can shrink $\Sigma_1$ to an arbitrarily small circle around $a$ and thus extend $U$ analytically from the region in between $\Sigma_1$ and $\Sigma_2$ to the whole interior of $\Sigma_1$ except at $a$. 

We now fix a row $j_{*}\in\{1,\ldots, 2q+1\}$ of the matrix $U$ and denote $U_1,\ldots, U_{2q+2}$ for the analytic continuations of $U_{{j_{*},}1},\ldots, U_{{j_{*}},2q+2}$ from the region between $\Sigma_1$ and $\Sigma_2$ to $\mathbb C\setminus\{a\}$.
The RH conditions imply that
\begin{align}
& U_{2q+2}(z) = \bigO(z^{-1}), & & \mbox{as } z \to \infty,\label{U1 U2 multi1}
\\ & U_{j}(z) =\bigO(1), & & \mbox{as } z \to a,\ j=1,\ldots, 2q+1, \label{U1 U2 multi2} \\
& U_{j}(z)+V_j(z)U_{2q+2}(z) = \delta_{j{,j_{\star}}}+\bigO(z^{-1}), & & \mbox{as } z \to \infty,\ j=1,\ldots, 2q+1, \label{U1 U2 multi3}\\ & U_{2q+2}(z) + \sum_{j=1}^{2q+{1}}\frac{(-1)^{j+1}}{V_j(z)}U_{j}(z) = \bigO(1), & &\mbox{as } z \to a. \label{U1 U2 multi4}
\end{align}
In particular, from \eqref{U1 U2 multi1} and \eqref{U1 U2 multi3} together with \eqref{def of Vj}, we see that $U_{j}(z) = \bigO(z^{m-k_{j}-\epsilon_{j}})$ as $z\to\infty$, for $j=1,\ldots,2q+1$. By \eqref{U1 U2 multi2}, $U_j$ for $j=1,\ldots, 2q+1$ is an entire function, hence it is a polynomial of degree 
at most $m-k_j-\epsilon_j$ (here we use that $m-k_{1}\geq 0$). On the other hand, $U_{2q+2}$ has an isolated singularity at $z=a$, which is a pole of order at most $N-m+\epsilon$, by \eqref{U1 U2 multi4}, \eqref{def of Vj} and \eqref{U1 U2 multi2}. Hence by \eqref{U1 U2 multi1}, $U_{2q+2}(z)$ has the form
\[U_{2q+2}(z)=\frac{\tilde s_{N-m+\epsilon-1}(z)}{(z-a)^{N-m+\epsilon}}\] with $\tilde s_{N-m+\epsilon-1}$ a polynomial of degree at most $N-m+\epsilon-1$.
It is more convenient to change variable $z=1/\zeta$, such that
\begin{align}\label{U2qP2 1/zeta}
U_{2q+2}(1/\zeta)=\frac{\zeta s_{N-m+\epsilon-1}(\zeta)}{(1-a\zeta)^{N-m+\epsilon}},
\end{align}
with $s_{N-m+\epsilon-1}$ a polynomial of degree at most $N-m+\epsilon-1$.

Combining \eqref{U1 U2 multi3} for two consecutive values of $j$ and taking into account \eqref{def of Vj}, we obtain that 
\begin{align*}
{\tilde{U}}_{j+1}-z^{k_{j}-k_{j+1}+(-1)^j}{\tilde{U}}_{j}=r_{j},\qquad {\tilde{U}_{j}:=U_{j}-\delta_{j,j_{\star}},} \qquad j=1,\ldots, 2q,
\end{align*}
with $r_j$ a polynomial of degree at most $k_j-k_{j+1}+(-1)^j-1$. (If $k_j-k_{j+1}+(-1)^j-1=-1$, then $r_{j}=0$ and $\tilde{U}_{j}=\tilde{U}_{j+1}$.)
In other words, $\tilde{U}_j$ is the quotient under Euclidean division of $\tilde{U}_{j+1}$ by $z^{k_{j}-k_{j+1}+(-1)^j}$, and $r_j$ is the remainder. Observe that if we know the $m-k_{2q+1}+1$ coefficients of $\tilde{U}_{2q+1}$, this fixes all of the functions $\tilde U_1,\ldots, \tilde U_{2q+1}$ and $r_1,\ldots, r_{2q}$. Concretely, writing
\begin{align}\label{Utilde 2q+1 1/zeta}
\zeta^{m-k_{2q+1}}\tilde U_{2q+1}(1/\zeta)=\sum_{i=k_{2q+1}}^{m}a_{m-i}\zeta^{m-i},
\end{align}
we have
\begin{align*}
&\zeta^{m-k_{j}-\epsilon_j}\tilde U_{j}(1/\zeta)=\sum_{i=k_{j}+\epsilon_j}^ma_{m-i} \zeta^{m-i},\qquad j=1,\ldots, 2q.
\end{align*}
Consequently, substituting this for $j=1,\ldots, 2q+1$ into \eqref{U1 U2 multi4} with $z=1/\zeta$, and using \eqref{def of Vj} and \eqref{U2qP2 1/zeta}, we obtain a telescoping sum and find after a straightforward computation that
\begin{equation}\label{expr1}\frac{\zeta s_{N-m+\epsilon-1}(\zeta) }{(1-a\zeta)^{N-m+\epsilon}}+\frac{\zeta}{(\zeta+a)^{m}(1-a\zeta)^{N-m+\epsilon}}\left(\sum_{i\in\mathcal I'} a_{m-i} \zeta^{m-i}-(-1)^{j_{*}}\zeta^{m-k_{j_{*}}-\epsilon_{j_{*}}}\right)=\bigO(1),\qquad \zeta\to 1/a,
\end{equation}
where we recall that $\mathcal{I}'$ is defined in \eqref{def of Ical prim}.
Alternatively, using that $\frac{V_{2q+1}(1/\zeta)}{V_{j}(1/\zeta)} = \zeta^{k_{2q+1}-k_{j}-\epsilon_{j}}$, we can express \eqref{U1 U2 multi4} with $z=1/\zeta$ as
\begin{align*}
\frac{1}{V_{2q+1}(1/\zeta)}\sum_{j=1}^{2q+1}(-1)^{j+1}\zeta^{k_{2q+1}-k_j-\epsilon_j}\left(U_j(1/\zeta)+V_{j}(1/\zeta)U_{2q+2}(1/\zeta)\right),
\end{align*}
which is $a^{-m}\zeta^{m-k_{1}+1}+\bigO(\zeta^{m-k_{1}+2})$ as $\zeta\to 0$ if $j_{*}=1$ and $\bigO(\zeta^{m-k_{1}+2})$ as $\zeta\to 0$ otherwise, by \eqref{U1 U2 multi3} and \eqref{def of Vj}.
Using this while multiplying \eqref{expr1} with $(\zeta+a)^m/\zeta^{m-k_1+1}$, we obtain that
\[
q_{k_1-1}(\zeta):=\frac{(\zeta+a)^m s_{N-m+\epsilon-1}(\zeta) }{\zeta^{m-k_1}(1-a\zeta)^{N-m+\epsilon}}+\frac{1}{\zeta^{m-k_1}(1-a\zeta)^{N-m+\epsilon}}\left(\sum_{i\in\mathcal I'} a_{m-i} \zeta^{m-i}-(-1)^{j_{*}}\zeta^{m-k_{j_{*}}-\epsilon_{j_{*}}}\right)
\]
is a polynomial of degree at most $k_1-1$ (here we used $k_{2q+1}\geq m-N-\epsilon+1$). If $j_{*}\neq 1$, we have moreover that $q_{k_1-1}(0)=0$, otherwise $q_{k_1-1}(0)=1$.

We find that 
\begin{multline}\label{exprs}
\zeta^{m-k_1}(1-a\zeta)^{N-m+\epsilon}q_{k_{1}-1}(\zeta)-\sum_{i\in\mathcal I'} a_{m-i} \zeta^{m-i}+(-1)^{j_{*}}\zeta^{m-k_{j_{*}}-\epsilon_{j_{*}}} \\
=(\zeta+a)^{m}s_{N-m+\epsilon-1}(\zeta)=\bigO((\zeta+a)^m),\qquad \zeta\to -a.
\end{multline}

Now, \eqref{U1 U2 multi3} with $j=2q+1$, $z=1/\zeta$
gives us
\[
\tilde U_{2q+1}(1/\zeta)+ V_{2q+1}(1/\zeta)U_{2q+2}(1/\zeta)=\bigO(\zeta),\quad  \zeta\to 0,\]
or equivalently, multiplying by $\zeta^{m-k_{2q+1}}$ and using \eqref{def of Vj}, \eqref{U2qP2 1/zeta} and \eqref{Utilde 2q+1 1/zeta},
\[
\sum_{i=k_{2q+1}}^{m}a_{m-i}\zeta^{m-i}+ (\zeta+a)^m s_{N-m+\epsilon-1}(\zeta)=\bigO(\zeta^{m-k_{2q+1}+1}),\quad  \zeta\to 0.\]
Substituting the expression obtained for $(\zeta+a)^{m}s_{N-m+\epsilon-1}(\zeta)$ in \eqref{exprs}, we get
\[
\zeta^{m-k_1}(1-a\zeta)^{N-m+\epsilon}q_{k_{1}-1}(\zeta)+\sum_{i\in \mathcal J}a_{m-i}\zeta^{m-i}+(-1)^{j_{*}}\zeta^{m-k_{j_{*}}-\epsilon_{j_{*}}}=O(\zeta^{m-k_{2q+1}+1}),\qquad \zeta\to 0,\]
where we recall that $\mathcal{J}$ is defined in \eqref{def of Jcal}.
In other words,
\[(1-a\zeta)^{N-m+\epsilon}q_{k_{1}-1}(\zeta)+\sum_{i\in \mathcal J}a_{m-i}\zeta^{k_1-i}+(-1)^{j_{*}}\zeta^{k_1-k_{j_{*}}-\epsilon_{j_{*}}}=O(\zeta^{k_1-k_{2q+1}+1}),\qquad \zeta\to 0.\]
We also have the identity
\[U_{j_{*},j_{*}}(0)=U_{j_{*}}(0)=\tilde U_{j_{*}}(0)+1=a_{m-k_{j_{*}}-\epsilon_{j_{*}}}+1.\]
The RH problem for the row $j_{*}$ of $U$ is thus equivalent to the problem of finding a polynomial $q_{k_1-1}$ of degree at most $k_1-1$ and coefficients $a_0,\ldots, a_{m-k_{2q+1}}$, such that
\begin{align}
&\label{eq:linsys1}\zeta^{m-k_1}(1-a\zeta)^{N-m+\epsilon}q_{k_{1}-1}(\zeta)-\sum_{i\in\mathcal I'} a_{m-i} \zeta^{m-i}+(-1)^{j_{*}}\zeta^{m-k_{j_{*}}-\epsilon_{j_{*}}}=\bigO((\zeta+a)^m),\qquad \zeta\to -a,\\
&\label{eq:linsys2}(1-a\zeta)^{N-m+\epsilon}q_{k_{1}-1}(\zeta)+\sum_{i\in \mathcal J}a_{m-i}\zeta^{k_1-i}+(-1)^{j_{*}}\zeta^{k_1-k_{j_{*}}-\epsilon_{j_{*}}}=O(\zeta^{k_1-k_{2q+1}+1}),\qquad \zeta\to 0.
\end{align}
This system determines $q_{k_1-1}$ and $a_0,\ldots, a_{m-k_{2q+1}}$ uniquely, because of the unique solvability of the RH problem for $U$. 

In conclusion, we proved the following result.

\begin{theorem}\label{theorem:tilingmulti}
Let $j\in \{1,\ldots,2q+1\}$. If $j$ is odd, we further assume $k_j<k_{j-1}$, while if $j$ is even we assume that $k_j+1<k_{j-1}$. Let $\mathcal I^*:=\sqcup_{j=0}^q\mathcal I_j^*$ with $\mathcal I_j^*$ as in Theorem \ref{thm:multi gap ratio}.
We have the identity
\begin{align}\label{lol12}
\frac{F_{N}^{m,{\mathcal I^*}}(a)}{F_N^{m,\mathcal I}(a)}=a_{m-k_j-\epsilon_j}+1 \;\mbox{ if $\epsilon=1$,}\qquad \frac{\tilde F_{N}^{m,{\mathcal I^*}}(a)}{\tilde F_N^{m,\mathcal I}(a)}=a_{m-k_j-\epsilon_j}+1 \; \mbox{ if $\epsilon=0$,}
\end{align}
where $a_{m-k_j-\epsilon_j}$ is determined by solving \eqref{eq:linsys1}--\eqref{eq:linsys2} (with $j_{*}$ replaced by $j$).
\end{theorem}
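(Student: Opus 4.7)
The plan is to combine a multi-gap extension of Proposition~\ref{prop:dominoFredholm 2} with Theorem~\ref{thm:multi gap ratio}, and then to resolve explicitly the $j$-th row of the $(2q+2)\times(2q+2)$ RH problem for $U$ in the Aztec-diamond specialization $W=\eqref{def:W2}$. First, I would observe that the path-to-tiling bijection of Section~\ref{section:paths} extends verbatim to the multi-gap setting: the event that no particle lies on $\mathcal I$ at level $2m-\epsilon$ is in bijection with the set of tilings of $A_N^{m,\mathcal I}$ (resp.\ $\tilde A_N^{m,\mathcal I}$), which yields
\[F_N^{m,\mathcal I}(a)=F_N(a)\det\bigl(1-1_{\mathcal I}K_{N,m}\bigr)_{\ell^2(\mathbb Z)}\qquad(\epsilon=1),\]
and the analogous identity for $\tilde F_N^{m,\mathcal I}(a)$ when $\epsilon=0$. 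Theorem~\ref{thm:multi gap ratio} with $\gamma_0=\cdots=\gamma_q=1$ then reduces \eqref{lol12} to an explicit computation of $U_{jj}(0;\vec k)$ for the RH problem stated in this subsection.

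Next, I would isolate the $j$-th row of $U$ and denote by $U_1,\ldots,U_{2q+2}$ the analytic continuations of $U_{j,1},\ldots,U_{j,2q+2}$ from the annular region between $\Sigma_1$ and $\Sigma_2$ to $\mathbb C\setminus\{a\}$. Since $V_1,\ldots,V_{2q+1}$ are analytic outside $\{0\}$ while $1/V_i$ is analytic inside $\Sigma_1$ except at $z=a$, the RH conditions translate into the growth/singularity system \eqref{U1 U2 multi1}--\eqref{U1 U2 multi4}. Together these force each $U_i$ for $1\leq i\leq 2q+1$ to be a polynomial of degree at most $m-k_i-\epsilon_i$, while $U_{2q+2}$ must be a rational function with a single pole of order at most $N-m+\epsilon$ at $z=a$ and decay at infinity; equivalently, under $z=1/\zeta$, it has the form \eqref{U2qP2 1/zeta}.

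After the substitution $z=1/\zeta$, the recursive identities obtained by subtracting consecutive instances of \eqref{U1 U2 multi3} show, via Euclidean division by monomials $\zeta^{k_i-k_{i+1}+(-1)^i}$, that all the $\tilde U_i$'s are determined by the $m-k_{2q+1}+1$ coefficients $a_0,\ldots,a_{m-k_{2q+1}}$ parametrizing $\zeta^{m-k_{2q+1}}\tilde U_{2q+1}(1/\zeta)$ as in \eqref{Utilde 2q+1 1/zeta}. Feeding these expressions into the pole condition \eqref{U1 U2 multi4} at $\zeta=1/a$ produces a telescoping sum, from which \eqref{expr1} emerges; combining with the expansion at $\zeta=0$ coming from the prefactor $V_{2q+1}^{-1}(1/\zeta)$ applied to \eqref{U1 U2 multi3} introduces the polynomial $q_{k_1-1}(\zeta)$ of degree at most $k_1-1$. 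The two resulting consistency requirements are precisely \eqref{eq:linsys1} and \eqref{eq:linsys2} with $j_*=j$. The hypothesis \eqref{conditionmultigap} guarantees that $\det(1-1_{\mathcal I}K_{N,m})\neq 0$ (there are at least $N$ admissible sites at level $2m-\epsilon$), hence that the RH problem for $U$ is uniquely solvable, which in turn forces unique solvability of \eqref{eq:linsys1}--\eqref{eq:linsys2}. The conclusion follows from $U_{jj}(0)=U_j(0)=\tilde U_j(0)+1=a_{m-k_j-\epsilon_j}+1$.

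The main obstacle I anticipate is the careful bookkeeping of degrees and of the distinction $\epsilon_j=0$ vs.\ $1$ while propagating the recursive relations across all $2q+1$ rows. In particular, one must verify that the $V_{2q+1}^{-1}(1/\zeta)$ normalization at $\zeta\to 0$ picks out exactly the contribution $(-1)^{j+1}\zeta^{m-k_j-\epsilon_j}$ that is responsible for the ``$+1$'' on the right-hand side of \eqref{lol12}, and that the polynomial $q_{k_1-1}$ has indeed degree at most $k_1-1$ (this uses $k_{2q+1}\geq m-N-\epsilon+1$ to control the $(1-a\zeta)^{N-m+\epsilon}$ factor, together with $k_1\leq m$ to control the $\zeta^{m-k_1}$ factor). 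A secondary subtlety is to confirm that the separate hypotheses $k_j<k_{j-1}$ (odd $j$) and $k_j+1<k_{j-1}$ (even $j$) in the theorem match exactly the non-degeneracy assumption used in Theorem~\ref{thm:multi gap ratio}, so that $\mathcal I^*$ still consists of $q+1$ disjoint clusters for which the framework applies without modification.
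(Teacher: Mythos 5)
Your proposal is correct and follows essentially the same route as the paper: the multi-gap Fredholm identity combined with Theorem \ref{thm:multi gap ratio}, then the row-wise analysis of the $(2q+2)\times(2q+2)$ RH problem via analytic continuation, the change of variable $z=1/\zeta$, the Euclidean-division recursion determining all $\tilde U_i$ from the coefficients $a_0,\ldots,a_{m-k_{2q+1}}$, the telescoping sum yielding \eqref{eq:linsys1}--\eqref{eq:linsys2}, and the evaluation $U_{jj}(0)=\tilde U_j(0)+1=a_{m-k_j-\epsilon_j}+1$. The degree and sign bookkeeping you flag as the main obstacles is handled in the paper exactly as you anticipate.
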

\begin{remark}The system \eqref{eq:linsys1}--\eqref{eq:linsys2} is a linear system of $m+(k_1-k_{2q+1}+1)$ equations for the unknowns $a_0,\ldots, a_{m-k_{2q+1}}$ and the $k_1$ coefficients of $q_{k_1-1}$. 
\end{remark}
\begin{remark}Note that $\mathcal J$ is empty in case $q=0$, such that \eqref{eq:linsys2} is simply equivalent to the normalization $q_{k_1-1}(0)=1$, and we recover Theorem \ref{theorem:tilings} if $\epsilon=1$ by taking a telescoping product (and for general $\epsilon\in\{0,1\}$, we recover the result of Subsection \ref{subsec:proof of main thm Aztec}). The quantities appearing on the right-hand side of \eqref{lol12} can thus be interpreted as coefficients of a generalization of Pad\'{e} approximants.
\end{remark}

\begin{figure}
\begin{center}
\hspace{-1cm}\begin{tikzpicture}[master]
\node at (0,0) {\includegraphics[width=6.5cm]{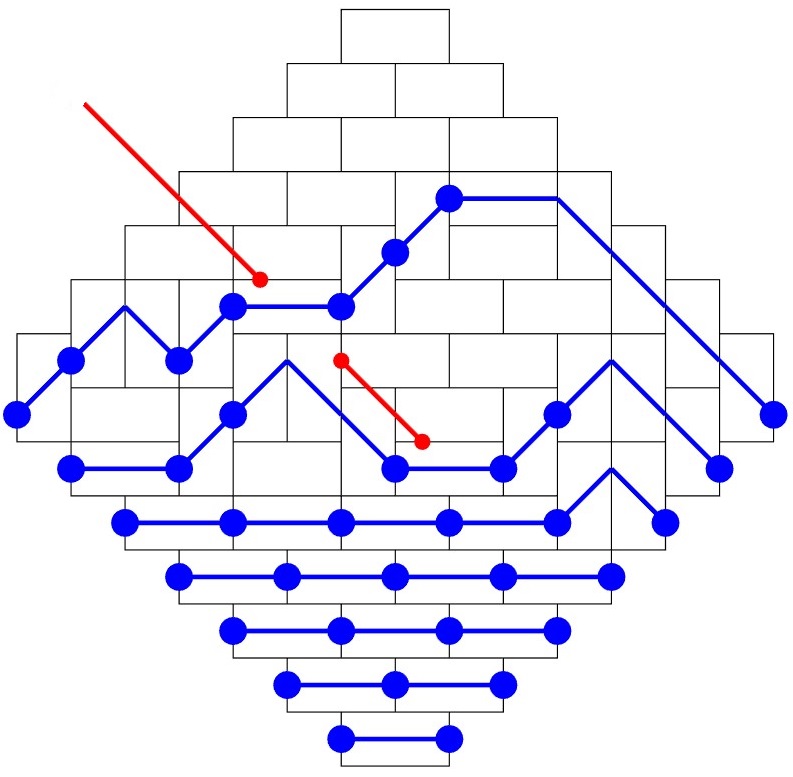}};
\end{tikzpicture} \hspace{0.5cm}
\begin{tikzpicture}[slave]
\node at (0,0) {\includegraphics[width=6.5cm]{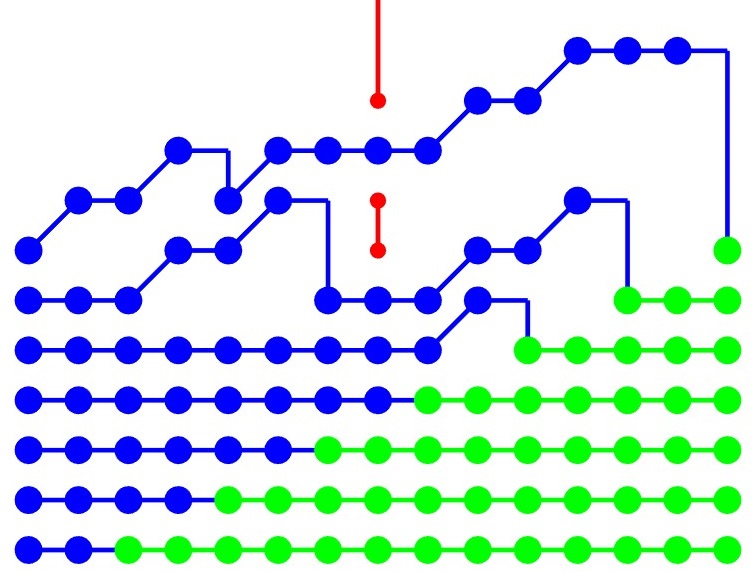}};
\end{tikzpicture}
\end{center}
\caption{\label{fig:gap 2}Here $N=7$ and the gap is $\{(r,k) : (r,k) \in \{7\}\times (\{0,1\} \cup \{3,4,\ldots\}) \}$.}
\end{figure}
\begin{figure}
\begin{center}
\hspace{-1cm}\begin{tikzpicture}[master]
\node at (0,0) {\includegraphics[width=6.5cm]{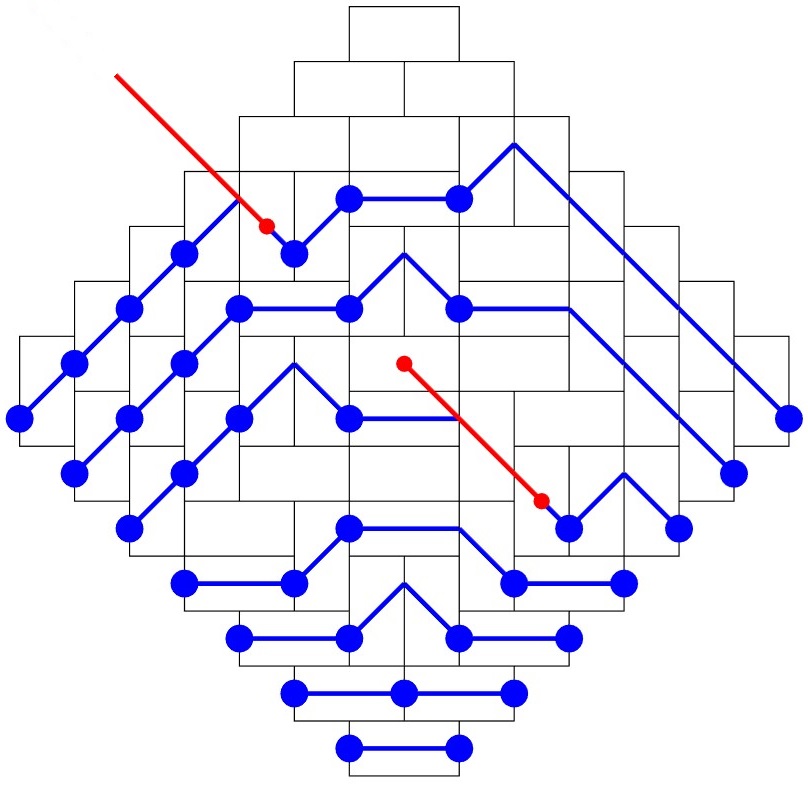}};
\end{tikzpicture} \hspace{0.5cm}
\begin{tikzpicture}[slave]
\node at (0,0) {\includegraphics[width=6.5cm]{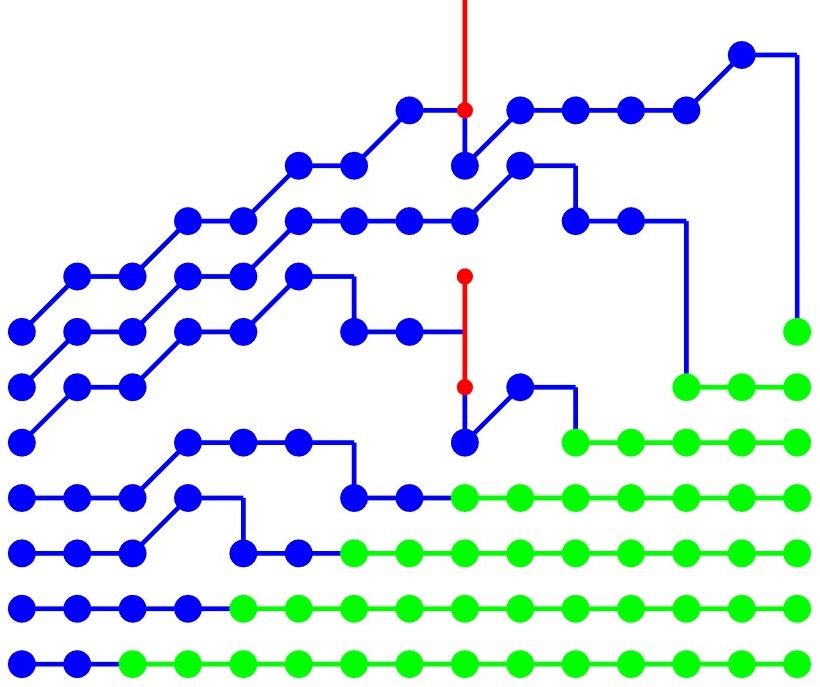}};
\end{tikzpicture}
\end{center}
\caption{\label{fig:gap 3}Here $N=7$ and the gap is $\{(r,k) : (r,k) \in \{8\}\times (\{-1,0,1\}\cup \{4,5,\ldots\}) \}$.}
\end{figure}

\section{Uniform lozenge tilings of reduced hexagons}

In this section, we consider uniformly distributed lozenge tilings of reduced hexagons. Theorem \ref{theorem:hextiling} is proved in Subsection \ref{subsec: hexa semi-infinite q=0}, in which we deal with the case of one semi-infinite interval (or more precisely, one unbounded cluster of consecutive integers, corresponding to $q=0$ and $k_{0}=+\infty$ in the setting of Theorem \ref{thm:multi gap ratio}). The analysis of Subsection \ref{subsec: hexa semi-infinite q=0} extends easily to general $q\in \N$, but we set $q=0$ for clarity. In Subsection \ref{subsec: hexa several finite intervals}, we consider the case of several bounded intervals (corresponding to general $q\in \N$ and $k_{0}<+\infty$ in the setting of Theorem \ref{thm:multi gap ratio}).

\subsection{The semi-infinite interval: proof of Theorem \ref{theorem:hextiling}}\label{subsec: hexa semi-infinite q=0}
We let $L,M,N\in\mathbb N_{>0}$ with $M<L$, $r\in\{1,\ldots, L-1\}$, $\max\{N,N-L+M+r\}\leq k\leq \min\{M+N-1,r+N-1\}$. Recall that $H_{L,M,N}^{r,k}$ is the reduced hexagon defined as the intersection of $H_{L,M,N}$ with the region $\{(x,y)\in\mathbb R^2: y\leq \max\{k, x+k-r\}\}$, see also Figure \ref{fig:hexagons grey regions and gap} and Figure \ref{fig:gap hexa 6} (left). Recall also that $G_{L,M,N}^{r,k}$ is the number of lozenge tilings of $H_{L,M,N}^{r,k}$, and that we have the identity \eqref{id:Fredholmhexagon}.
From Proposition \ref{prop:hexagonFredholm} and Theorem \ref{thm:multi gap ratio}, we know that
\begin{align}\label{lol3}
\frac{G_{L,M,N}^{r,k+1}}{G_{L,M,N}^{r,k}}=U_{11}(0),
\end{align}
where $U$ solves the following RH problem.
\subsubsection*{RH problem for $U$}
\begin{itemize}
\item[(a)] $U:\mathbb C\setminus\left(\Sigma_1\cup\Sigma_2\right)\to \mathbb C^{3\times 3}$ is analytic,
where $\Sigma_1,\Sigma_2$ are closed loops oriented positively. Moreover, $\Sigma_1$ encircles $0$, and $\Sigma_2$ encircles $\Sigma_1$.
\item[(b)] $U_+(z)=U_-(z)J_U(z)$ for $z\in \Sigma_1\cup\Sigma_2$, with $J_U$ given by
\begin{align}\label{eq:JU Hexagon}
J_U(z)=\begin{pmatrix}
1 & 1_{\Sigma_1}(z)z^{k}h_1(z) & 1_{\Sigma_1}(z)z^{k}h_2(z)
  \\
- 1_{\Sigma_2}(z) z^{-k}g_{1}(z) & 1 & 0 \\
- 1_{\Sigma_2}(z) z^{-k}g_{2}(z) & 0 & 1
\end{pmatrix},
\end{align}
i.e.
\begin{align*}
& J_U(z) = \begin{pmatrix}
1 & \frac{(1+z)^{L-r}}{z^{M+N-k}}Y_{11}(z) & -\frac{(1+z)^{L-r}}{z^{M+N-k}}Y_{21}(z)
  \\
0 & 1 & 0 \\
0 & 0 & 1
\end{pmatrix}, & & z \in \Sigma_{1}, \\
& J_U(z) = \begin{pmatrix}
1 & 0 & 0 \\[0.05cm]
-\frac{(1+z)^{r}}{z^{k}}Y_{21}(z) & 1 & 0 \\[0.05cm]
-\frac{(1+z)^{r}}{z^{k}}Y_{11}(z) & 0 & 1
\end{pmatrix}, & & z \in \Sigma_{2}.
\end{align*}
\item[(c)]{As $z\to \infty$, $U(z) = I +\bigO(z^{-1})$}.
\end{itemize}
Here $Y$ is itself the solution of the $2\times 2$ RH problem given in Subsection \ref{subsection: lozenge tilings}.

\subsubsection*{RH problem for $Y$}
\begin{itemize}
\item[(a)] $Y:\C\setminus \gamma \to \C^{2\times 2}$ is analytic, where $\gamma$ is a closed curve oriented positively and surrounding $0$.
\item[(b)] $Y$ satisfies
\begin{align*}
Y_{+}(z) = Y_{-}(z) \begin{pmatrix}
1 & \frac{(1+z)^{L}}{z^{M+N}} \\
0 & 1
\end{pmatrix}, \qquad z \in \gamma.
\end{align*}
\item[(c)] $Y(z) = (I+O(z^{-1}))z^{N\sigma_{3}}$ as $z\to \infty$.
\end{itemize}

We can considerably simplify the RH problem for $U$ by employing the RH conditions for $Y$.
First, we observe that we can take $\Sigma_1=\Sigma_2=\gamma$ in the RH problem for $U$, where we recall that $\gamma$ is the contour of the RH problem for $Y$. The jump matrix for $U$ then becomes
\begin{align*}
J_U(z) = \begin{pmatrix}
1 & \frac{(1+z)^{L-r}}{z^{M+N-k}} Y_{11}(z) & -\frac{(1+z)^{L-r}}{z^{M+N-k}} Y_{21}(z) \\
-\frac{(1+z)^{r}}{z^{k}} Y_{21}(z) & 1-\frac{(1+z)^{L}}{z^{M+N}} Y_{11}(z) Y_{21}(z) & \frac{(1+z)^{L}}{z^{M+N}} Y_{21}(z)^{2} \\
-\frac{(1+z)^{r}}{z^{k}} Y_{11}(z) & -\frac{(1+z)^{L}}{z^{M+N}} Y_{11}(z)^{2} & 1+\frac{(1+z)^{L}}{z^{M+N}} Y_{11}(z) Y_{21}(z)
\end{pmatrix}, \qquad z\in\gamma.
\end{align*}

We have the factorization
\begin{equation}
J_U=\tilde Y J\tilde Y^{-1},\qquad \tilde Y=\begin{pmatrix}1&0&0\\
0&Y_{21}&Y_{22}\\
0&Y_{11}&Y_{12}
\end{pmatrix},\qquad J(z)=
\begin{pmatrix}
1&0&\frac{(1+z)^{L-r}}{z^{M+N-k}} \\
-\frac{(1+z)^r}{z^k}&1&-\frac{(1+z)^L}{z^{M+N}} \\
0&0&1
\end{pmatrix}.
\end{equation}
\begin{figure}
\begin{center}
\hspace{-1cm}\begin{tikzpicture}[master]
\node at (0,0) {\includegraphics[width=6.5cm]{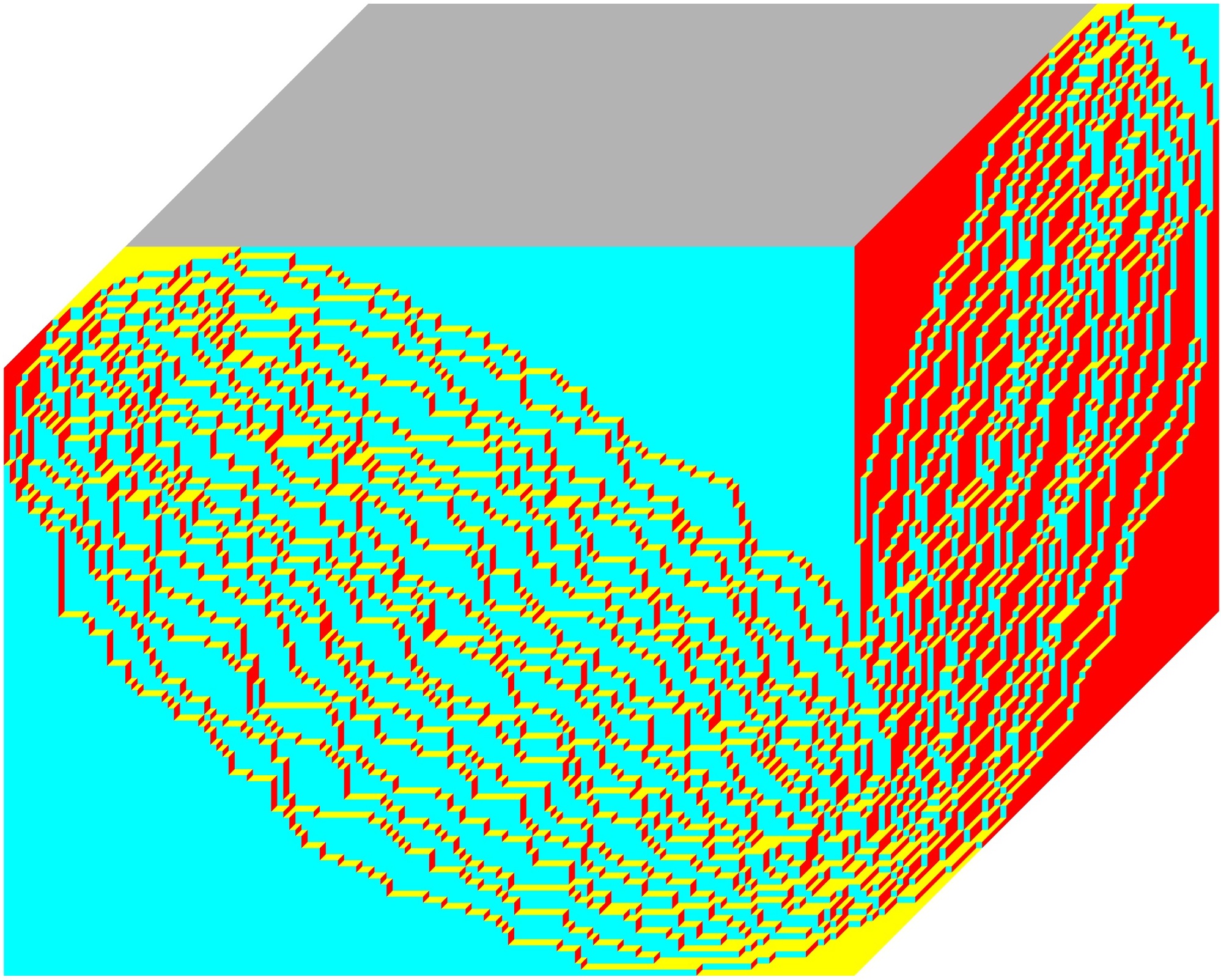}};
\end{tikzpicture} \hspace{0.5cm} \begin{tikzpicture}[slave]
\node at (0,0) {\includegraphics[width=6.5cm]{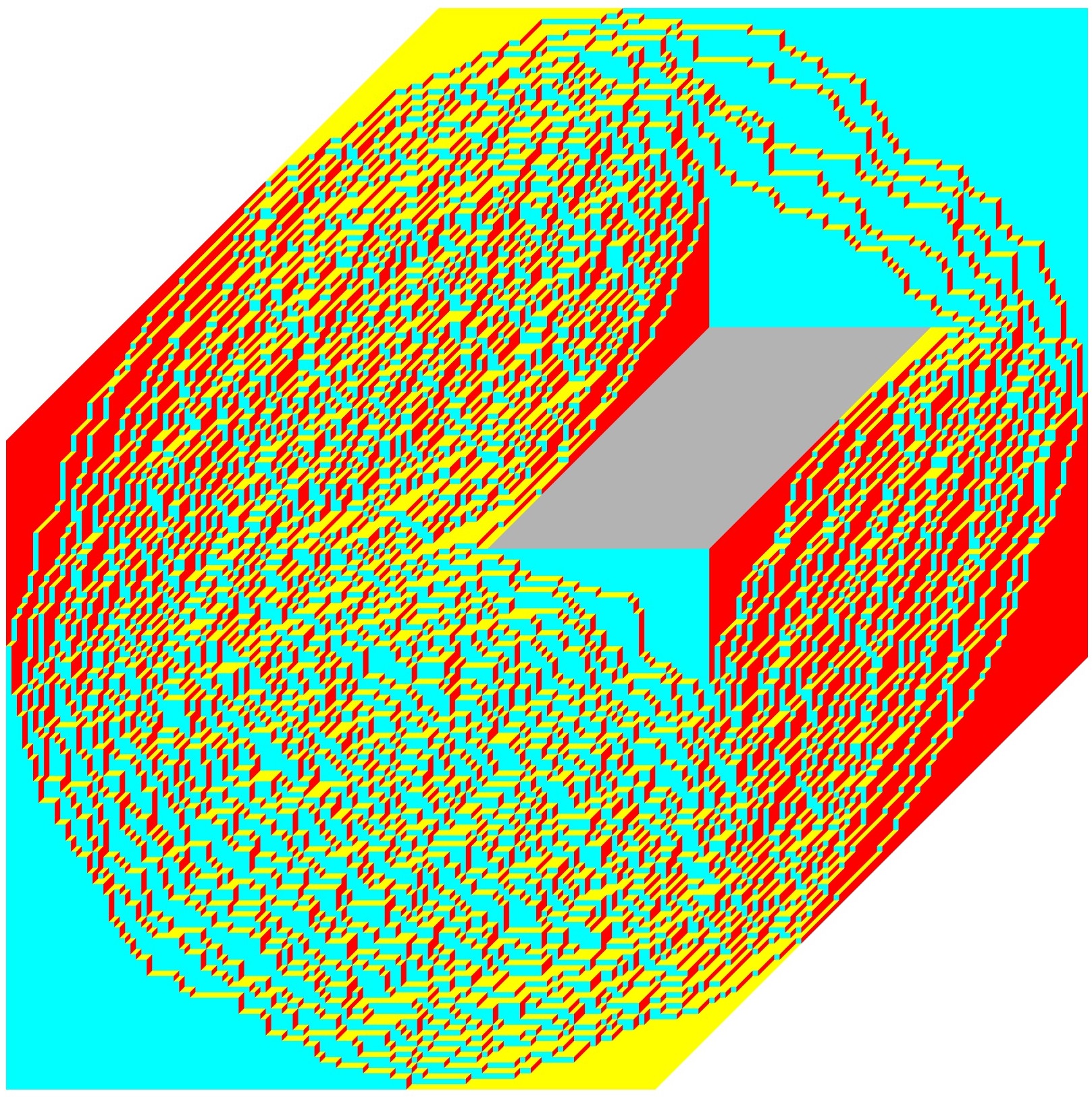}};
\end{tikzpicture} 
\end{center}
\caption{\label{fig:gap hexa 6}Left: a tiling of $H_{L,M,N}^{r,k}$ with $L=200$, $M=60$, $N=100$, $r=140$ and $k=120$, chosen uniformly at random. Right: a tiling of $H_{L,M,N}^{r,[k_{1},k_{0}]\cap \Z}$ with $L=200$, $M=80$, $N=120$, $r=130$, $k_{1}=100$ and $k_{0}=140$, chosen uniformly at random.}
\end{figure}
Define
\begin{equation}\label{lol4}
R(z)=U(z)\tilde Y(z),
\end{equation}
and observe that $\tilde Y$ has the jump relation $\tilde Y_+=\tilde Y_-\tilde J_Y$ on $\gamma$, with \[\tilde J_Y(z)=\begin{pmatrix}1&0&0\\
0&1&\frac{(1+z)^L}{z^{M+N}}\\
0&0&1\end{pmatrix}.\]
It is straightforward to verify that $R$ satisfies the following conditions.
\subsubsection*{RH problem for $R$}
\begin{itemize}
\item[(a)] $R:\mathbb C\setminus\gamma\to\mathbb C^{3\times 3}$ is analytic.
\item[(b)] For $z\in\gamma$, we have
$R_+=R_-J\tilde J_Y$.
\item[(c)] As $z\to\infty$, we have
\[R(z)=\left(\begin{pmatrix}1&0&0\\
0&0&1\\
0&1&0\end{pmatrix}+\bigO(z^{-1})\right)\begin{pmatrix}1&0&0\\
0&z^N&0\\
0&0&z^{-N}\end{pmatrix}.\]
\end{itemize}
The major advantage of the RH problem for $R$, compared to the one for $U$, is that its jump relation depends only on simple explicit functions, not on Jacobi polynomials. We expect that a similar procedure to simplify the jump matrices can be used for the kernels of the form \eqref{double contour integrale duits kuijlaars} appearing in doubly periodic tiling models.
Now we denote $R_1,R_2,R_3$ for the analytic continuations of $R_{11}, R_{12}, R_{13}$ from the region outside $\gamma$ to $\mathbb C\setminus\{0\}$.
Then, the above RH conditions for $R$ imply the following discrete RH conditions for $(R_1,R_2,R_3)$.
\subsubsection*{Discrete RH problem for $(R_1,R_2,R_3)$}
\begin{itemize}\item[(a)] $(R_1,R_2,R_3)$ is analytic in $\mathbb C\setminus \{0\}$.
\item[(b)] As $z\to 0$,
\begin{align}\label{lol5}
(R_1(z),R_2(z),R_3(z))\begin{pmatrix}1&0&\frac{(1+z)^{L-r}}{z^{M+N-k}}\\
-\frac{(1+z)^r}{z^k}&1&0\\0&0&1\end{pmatrix}=\bigO(1).
\end{align}
\item[(c)] As $z\to\infty$, we have
\[(R_1(z),R_2(z),R_3(z))=\left(1+\bigO(z^{-1}),\bigO(z^{N-1}),\bigO(z^{-N-1})\right).\]
\end{itemize}
From these conditions, we deduce first that
\begin{itemize}
\item $R_2$ is a polynomial $\tilde p_{N-1}$ of degree at most $N-1$,
\item $z^kR_1(z)=\tilde P_k(z)$ is a monic polynomial of degree $k$,
\item $z^{M+N}R_3(z)=q_{M-1}$ is a polynomial of degree at most $M-1$.
\end{itemize}
Next, the condition near $0$ for the first entry tells us that
\[z^{-k}\tilde P_k(z)-\frac{(1+z)^r}{z^k}\tilde p_{N-1}(z)=\bigO(1),\qquad z\to 0.\]
Thus,
\begin{equation}\label{eq:systemhex1}
p_{N-1+r-k}(z):=z^{-k}\tilde P_k(z)-\frac{(1+z)^r}{z^k}\tilde p_{N-1}(z)
\end{equation}
is a polynomial of degree at most $N-1+r-k$.
Finally, the condition near $0$ for the third entry tells us that
\[z^{-M-N}q_{M-1}(z)+\frac{(1+z)^{L-r}}{z^{M+N}}\tilde P_k(z)=\bigO(1),\qquad z\to 0.\]
Thus,
\begin{align}\label{def of P in hexa}
P_{L-M-N+k-r}(z):=z^{-M-N}q_{M-1}(z)+\frac{(1+z)^{L-r}}{z^{M+N}}\tilde P_k(z)
\end{align}
is a monic polynomial of degree $L-M-N+k-r$.
Solving \eqref{eq:systemhex1} for $\tilde P_k$ and substituting this in \eqref{def of P in hexa}, we obtain
\[q_{M-1}(z)+(1+z)^{L-r}z^kp_{N-1+r-k}(z)+(1+z)^L\tilde p_{N-1}(z)-z^{M+N}P_{L-M-N+k-r}(z)=0.\]
In particular, this implies that
\[q_{M-1}(z)+(1+z)^{L-r}z^kp_{N-1+r-k}(z)-z^{M+N}P_{L-M-N+k-r}(z)=\bigO((z+1)^L),\qquad z\to -1.\]
On the other hand, using \eqref{lol3}, \eqref{lol4}, \eqref{lol5} and \eqref{eq:systemhex1}, we obtain
\[\frac{G_{L,M,N}^{r,k+1}}{G_{L,M,N}^{r,k}}=U_{11}(0)=\lim_{z\to 0}\left(R_{1}(z)-\frac{(1+z)^r}{z^k}R_2(z)\right)=p_{N-1+r-k}(0).\]
This proves Theorem \ref{theorem:hextiling}.

\subsection{$q+1$ finite intervals}\label{subsec: hexa several finite intervals}
Let $L,M,N\in\mathbb N_{>0}$ with $M<L$, $r\in\{1,\ldots, L-1\}$, $q\in \N$, $k_{0},k_{1},\ldots,k_{2q},k_{2q+1}\in \Z$ be such that \eqref{inequalities between the kj's} holds and such that 
\begin{align}
\begin{cases}
\ds \sum_{j=0}^{q}(k_{2j}+1-k_{2j+1}) \leq \min \{r,M,L-r\}, \\
\ds \max\{0,r-L+M\} \leq k_{2q+1} \leq k_{0} \leq \min \{M+N-1,r+N-1\}.
\end{cases} \label{cond on k0 and k1}
\end{align}
Let us define $\mathcal{I}$ as in \eqref{def:I}. We also define $H_{L,M,N}^{r,\mathcal{I}}$ to be the intersection of the hexagon $H_{L,M,N}$ with the complement of the $q+1$ lozenges delimited by the corners $\{(r,k_{2j+1}-\frac{1}{2}),(r+k_{2j}-k_{2j+1},k_{2j}+\frac{1}{2}),(r,k_{2j}+\frac{1}{2}),(r-k_{2j}+k_{2j+1},k_{2j+1}-\frac{1}{2})\}_{j=0}^{q}$, see Figures \ref{fig:gap hexa 4} (left) and \ref{fig:gap hexa 6} (right) for two examples with $q=0$ and Figure \ref{fig:gap hexa 5} (left) for an example with $q=2$. 

The conditions \eqref{cond on k0 and k1} imply that $H_{L,M,N}^{r,\mathcal{I}}$ is tileable: indeed, the first condition in \eqref{cond on k0 and k1} ensures that there are at least $N$ sites available at level $r$ for the $N$ particles belonging to the non-intersecting paths lying in $H_{L,M,N}^{r,\mathcal{I}}$, and the second condition in \eqref{cond on k0 and k1} implies that $\mathcal{I}$ lies inside the hexagon $H_{L,M,N}$.

\begin{figure}
\begin{center}
\hspace{-1cm}\begin{tikzpicture}[master]
\node at (0,0) {\includegraphics[width=6.5cm]{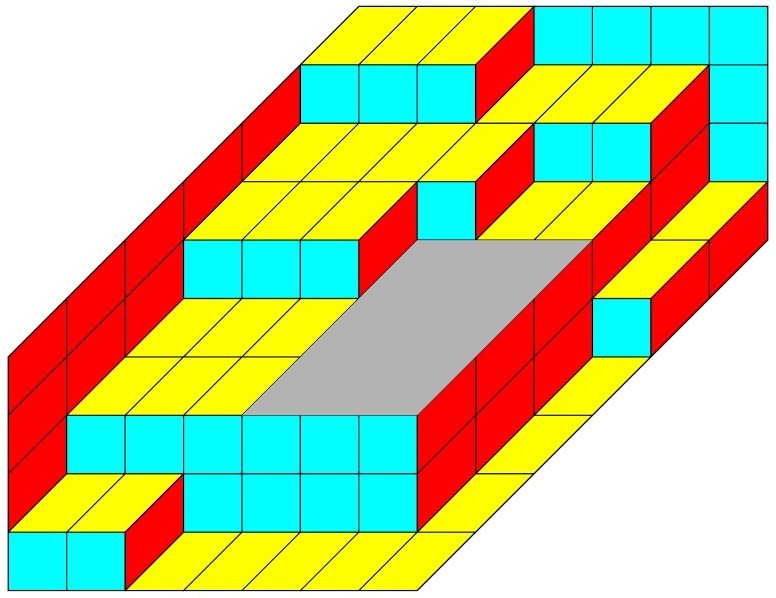}};
\end{tikzpicture} \hspace{0.5cm} \begin{tikzpicture}[slave]
\node at (0,0) {\includegraphics[width=6.5cm]{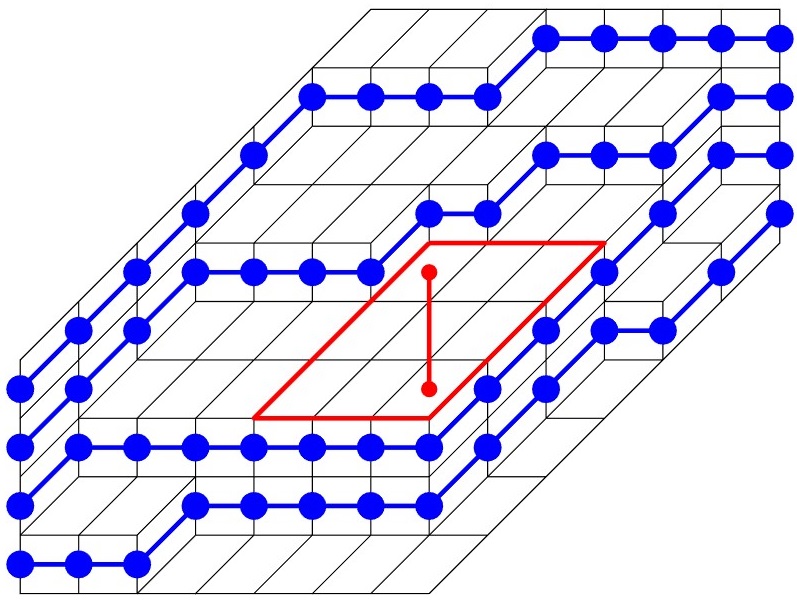}};
\end{tikzpicture} 
\end{center}
\caption{\label{fig:gap hexa 4}{A tiling of $H_{L,M,N}^{r,[k_{1},k_{0}]\cap \Z}$ with $L=13$, $M=6$, $N=4$, $r=7$, $k_{1}=3$ and $k_{0}=5$ (left) corresponds to a system of non-intersecting paths with no points on $\{r\}\times ([k_{1},k_{0}]\cap \Z)$ (right).}}
\end{figure}

Recall that the kernel for the $N$ points at time $r$ is given by \eqref{def:Khexagon}--\eqref{def:hhex}. Since any lozenge tiling of $H_{L,M,N}^{r,\mathcal{I}}$ corresponds to a system of non-intersecting paths avoiding $\{r\}\times \mathcal{I}$ (see Figure \ref{fig:gap hexa 4}), we have
\begin{equation}\label{id:Fredholmhexagon k1k0}
G_{L,M,N}^{r,\mathcal{I}}=G_{L,M,N}\det\left(1-1_{\mathcal{I}}K_{L,M,N,r}\right)_{\ell^2(\mathbb Z)},
\end{equation}
Suppose from now on that $k_{0},k_{1}$ are such that \eqref{cond on k0 and k1} also holds with $k_{0}$ replaced by $k_{0}+1$. Let $\mathcal{I}^*$  be the same cluster as $\mathcal I$, but with $k_{0}$ replaced by $k_{0}+1$, i.e. $\mathcal{I}^{*} = (\sqcup_{j=1}^{q}[k_{2j+1},k_{2j}] \sqcup [k_{1},k_{0}+1])\cap \Z$. Then, by Theorem \ref{thm:multi gap ratio}, we have
\begin{align*}
\frac{G_{L,M,N}^{r,\mathcal{I}^*}}{G_{L,M,N}^{r,\mathcal{I}}} = U_{00}(0),
\end{align*}
where $U$ is the solution to the following RH problem. (In this subsection, since $k_{0}<+\infty$, we adopt the same convention as in Theorem \ref{thm:multi gap ratio}, i.e. the row and column indices of $U$ start at $0$ instead of $1$.)

\begin{figure}
\begin{center}
\hspace{-1cm}\begin{tikzpicture}[master]
\node at (0,0) {\includegraphics[width=6.5cm]{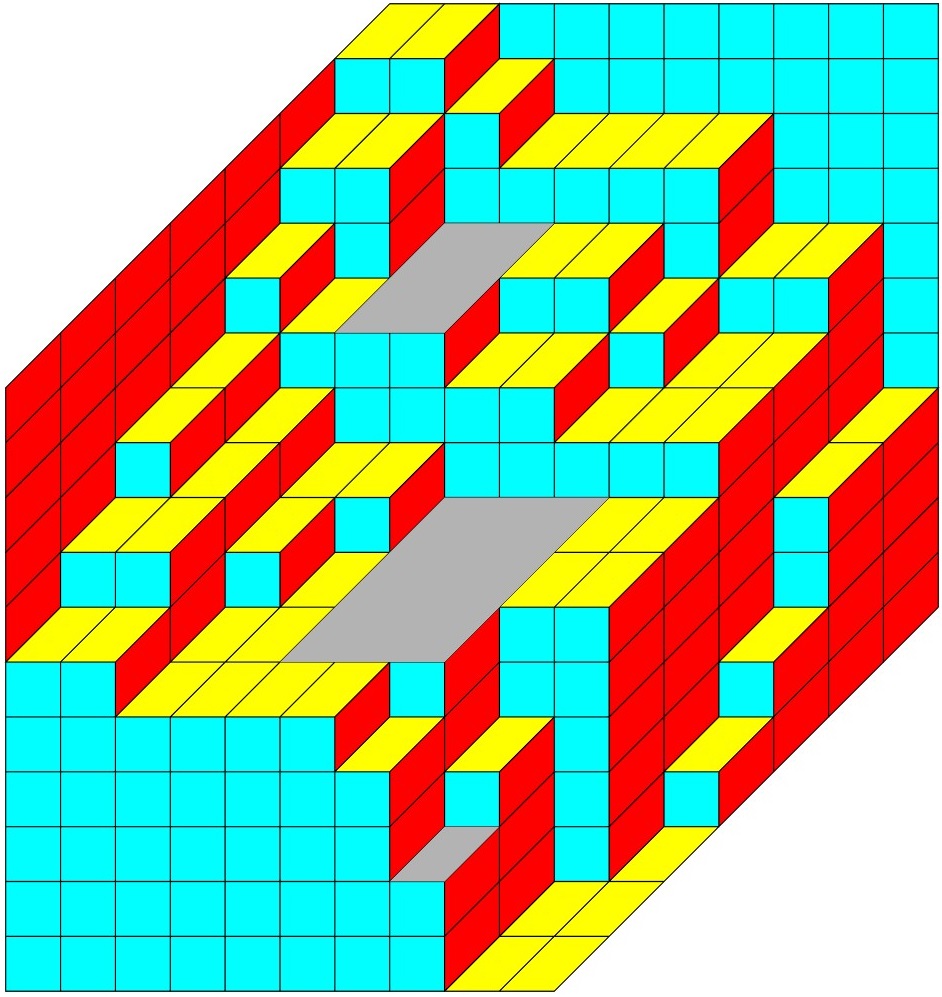}};
\end{tikzpicture} \hspace{0.5cm} \begin{tikzpicture}[slave]
\node at (0,0) {\includegraphics[width=6.5cm]{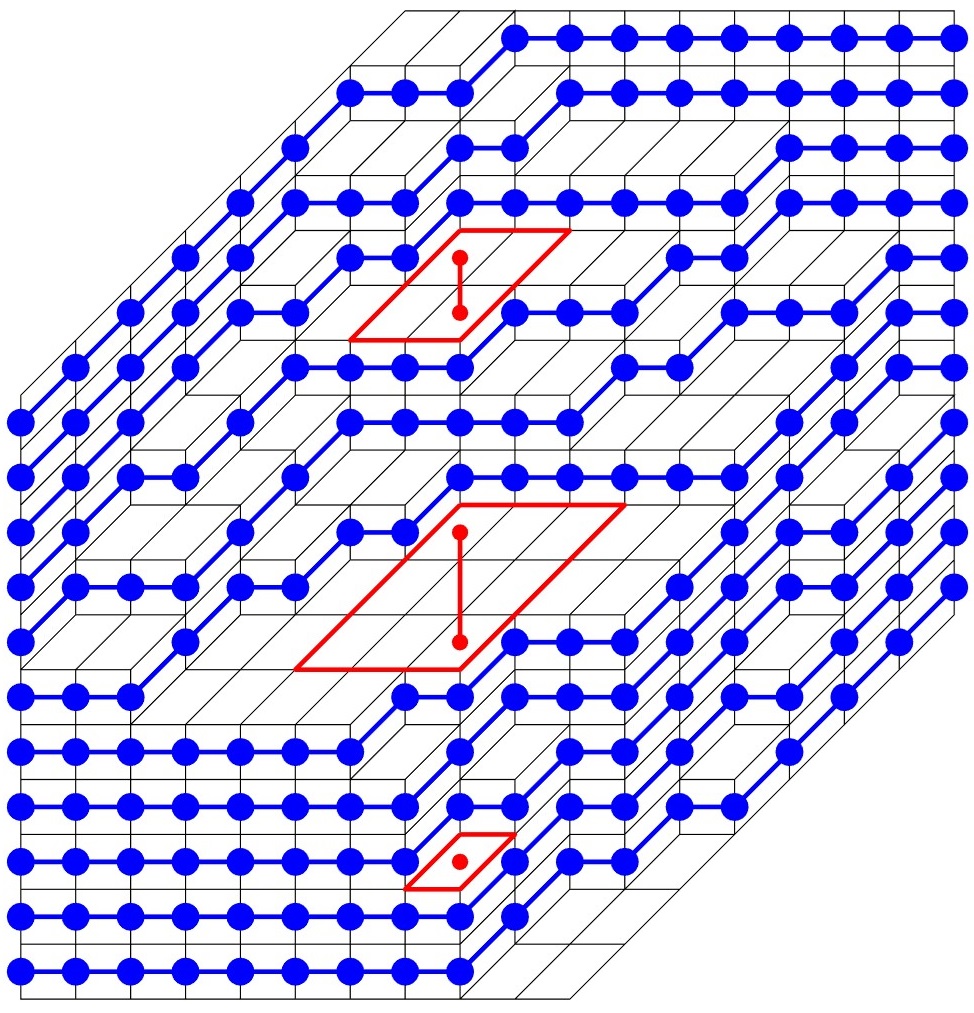}};
\end{tikzpicture} 
\end{center}
\caption{\label{fig:gap hexa 5}{A tiling of $H_{L,M,N}^{r,([k_{5},k_{4}]\cup [k_{3},k_{2}] \cup [k_{1},k_{0}])\cap \Z}$ with $L=17$, $M=7$, $N=11$, $r=8$, $k_{5}=k_{4}=2$, $k_{3}=6$, $k_{2}=8$, $k_{1}=12$ and $k_{0}=13$ (left) corresponds to a system of non-intersecting paths with no points on $\{r\}\times (([k_{5},k_{4}]\cup [k_{3},k_{2}] \cup [k_{1},k_{0}])\cap \Z)$ (right).}}
\end{figure}
 
\subsubsection*{RH problem for $U$}
\begin{itemize}
\item[(a)] $U:\mathbb C\setminus\left(\Sigma_1\cup\Sigma_2\right)\to \mathbb C^{(4+2q)\times (4+2q)}$ is analytic,
where $\Sigma_1$ encircles $0$, and $\Sigma_2$ encircles $\Sigma_1$.
\item[(b)] $U_+(z)=U_-(z)J_U(z)$ for $z\in \Sigma_1\cup\Sigma_2$, with $J_U$ given by \eqref{JU for k0 finite} with $d=2$, $\gamma_{0}=\ldots=\gamma_{q}=1$, and with $g_{1},g_{2},h_{1},h_{2}$ as in \eqref{def:ghex}--\eqref{def:hhex}, i.e.
\begin{align}
& J_U(z) = \begin{pmatrix}
1 & 0 & \cdots & 0 & 0 & -\frac{(1+z)^{L-r}}{z^{M+N-k_{0}-1}}Y_{11}(z) & \frac{(1+z)^{L-r}}{z^{M+N-k_{0}-1}}Y_{21}(z) \\
0 & 1 & \cdots & 0 & 0 & \frac{(1+z)^{L-r}}{z^{M+N-k_{1}}}Y_{11}(z) & - \frac{(1+z)^{L-r}}{z^{M+N-k_{1}}}Y_{21}(z) \\
\cdots & \cdots & \cdots & \cdots & \cdots & \vdots & \vdots \\
0 & 0 & \cdots & 1 & 0 & -\frac{(1+z)^{L-r}}{z^{M+N-k_{2q}-1}}Y_{11}(z) & \frac{(1+z)^{L-r}}{z^{M+N-k_{2q}-1}}Y_{21}(z) \\
0 & 0 & \cdots & 0 & 1 & \frac{(1+z)^{L-r}}{z^{M+N-k_{2q+1}}}Y_{11}(z) & -\frac{(1+z)^{L-r}}{z^{M+N-k_{2q+1}}}Y_{21}(z) \\
0 & 0 & \cdots & 0 & 0 & 1 & 0 \\
0 & 0 & \cdots & 0 & 0 & 0 & 1 \\
\end{pmatrix}, & & z \in \Sigma_{1}, \nonumber \\
& J_U(z) = \begin{pmatrix}
1 & 0 & \cdots & 0 & 0 & 0 & 0 \\
0 & 1 & \cdots & 0 & 0 & 0 & 0 \\
- \frac{(1+z)^{r}}{z^{1+k_{0}}}Y_{21} & - \frac{(1+z)^{r}}{z^{k_{1}}}Y_{21} & \cdots & - \frac{(1+z)^{r}}{z^{1+k_{2q}}}Y_{21} & - \frac{(1+z)^{r}}{z^{k_{2q+1}}}Y_{21} & 1 & 0 \\[0.05cm]
- \frac{(1+z)^{r}}{z^{1+k_{0}}}Y_{11} & - \frac{(1+z)^{r}}{z^{k_{1}}} Y_{11} & \cdots & - \frac{(1+z)^{r}}{z^{1+k_{2q}}}Y_{11} & - \frac{(1+z)^{r}}{z^{k_{2q+1}}} Y_{11} & 0 & 1 \\
\end{pmatrix}, & & z \in \Sigma_{2}. \label{lol8}
\end{align}
In \eqref{lol8}, $Y_{11}$ and $Y_{21}$ should be understood as $Y_{11}(z)$ and $Y_{21}(z)$, respectively.
\item[(c)]{As $z\to \infty$, $U(z) = I +\bigO(z^{-1})$}.
\end{itemize}
By deforming the contours $\Sigma_{1}$ and $\Sigma_{2}$ so that $\Sigma_{1}=\Sigma_{2}=:\gamma$, the jump matrix for $U$ becomes
\begin{align}\label{lol9}
J_{U}(z) = \begin{pmatrix}
I_{2q+2}& \begin{pmatrix}
-z^{1+k_{2\ell}}h_j(z) \\
z^{k_{2\ell+1}}h_j(z)
\end{pmatrix}_{\substack{\ell=0,\ldots,q \\ j=1,\ldots,2}}  \\
-\begin{pmatrix}
z^{-1-k_{2\ell}}g_j(z) \\
z^{-k_{2\ell+1}}g_j(z)
\end{pmatrix}_{\substack{ \ell=0,\ldots,q  \\ j=1,\ldots,2 }}^{T} 
& I_2
\end{pmatrix}.
\end{align}
To be more explicit, if $q=0$, \eqref{lol9} is given by
\begin{align*}
J_U(z) = \begin{pmatrix}
1 & 0 & -\frac{(1+z)^{L-r}}{z^{M+N-k_{0}-1}}Y_{11}(z) & \frac{(1+z)^{L-r}}{z^{M+N-k_{0}-1}}Y_{21}(z) \\
0 & 1 & \frac{(1+z)^{L-r}}{z^{M+N-k_{1}}}Y_{11}(z) & - \frac{(1+z)^{L-r}}{z^{M+N-k_{1}}}Y_{21}(z) \\
- \frac{(1+z)^{r}}{z^{1+k_{0}}}Y_{21}(z) & - \frac{(1+z)^{r}}{z^{k_{1}}}Y_{21}(z) & 1 & 0 \\[0.05cm]
- \frac{(1+z)^{r}}{z^{1+k_{0}}}Y_{11}(z) & - \frac{(1+z)^{r}}{z^{k_{1}}} Y_{11}(z) & 0 & 1 \\
\end{pmatrix}.
\end{align*}
For general $q$, $J_{U}$ can be factorized as
\begin{equation*}
J_U=\tilde Y J\tilde Y^{-1},\qquad \tilde Y=\begin{pmatrix}
I_{2q+2} & 0_{(2q+2)\times 2}\\
0_{2 \times (2q+2)}& \begin{pmatrix}
Y_{21} & Y_{22}\\
Y_{11} & Y_{12}
\end{pmatrix} \end{pmatrix},
\end{equation*}
where
\begin{align*}
J(z)=
\begin{pmatrix}
1 & 0 & \cdots & 0 & 0 & 0 & - \frac{(1+z)^{L-r}}{z^{M+N-k_{0}-1}} \\
0 & 1 & \cdots & 0 & 0 & 0 & \frac{(1+z)^{L-r}}{z^{M+N-k_{1}}} \\
\cdots & \cdots & \cdots & \cdots & \cdots & \vdots & \vdots \\
0 & 0 & \cdots & 1 & 0 & 0 & - \frac{(1+z)^{L-r}}{z^{M+N-k_{2q}-1}} \\
0 & 0 & \cdots & 0 & 1 & 0 & \frac{(1+z)^{L-r}}{z^{M+N-k_{2q+1}}} \\
- \frac{(1+z)^{r}}{z^{1+k_{0}}} & - \frac{(1+z)^{r}}{z^{k_{1}}} & \cdots & - \frac{(1+z)^{r}}{z^{1+k_{2q}}} & - \frac{(1+z)^{r}}{z^{k_{2q+1}}} & 1 & 0 \\
0 & 0 & \cdots & 0 & 0 & 0 & 1 \\
\end{pmatrix}.
\end{align*}
Define
\begin{equation}\label{lol4 bis}
R(z)=U(z)\tilde Y(z),
\end{equation}
and observe that $\tilde Y$ has the jump relation $\tilde Y_+=\tilde Y_-\tilde J_Y$ on $\gamma$, with 
\begin{align*}
\tilde J_Y(z)=\begin{pmatrix}
I_{2q+2} & 0_{(2q+2)\times 2}\\
0_{2 \times (2q+2)}& \begin{pmatrix}
1 & \frac{(1+z)^L}{z^{M+N}}\\
0 & 1
\end{pmatrix} \end{pmatrix}.
\end{align*}
It is straightforward to verify that $R$ satisfies the following conditions.
\subsubsection*{RH problem for $R$}
\begin{itemize}
\item[(a)] $R:\mathbb C\setminus\gamma\to\mathbb C^{(4+2q)\times (4+2q)}$ is analytic.
\item[(b)] For $z\in\gamma$, we have
$R_+=R_-J\tilde J_Y$.
\item[(c)] As $z\to\infty$, we have
\begin{align*}
R(z)=\left( \begin{pmatrix}
I_{2q+2} & 0_{(2q+2)\times 2}\\[0.05cm]
0_{2 \times (2q+2)}& \begin{pmatrix}
0 & 1\\
1 & 0
\end{pmatrix} \end{pmatrix} +\bigO(z^{-1})\right)\begin{pmatrix}
I_{2q+2} & 0_{(2q+2)\times 2}\\[0.05cm]
0_{2 \times (2q+2)}& \begin{pmatrix}
z^{N} & 0\\
0 & z^{-N}
\end{pmatrix} \end{pmatrix}.
\end{align*}
\end{itemize}
Now we denote $R_{0},R_1,\ldots,R_{2q+2},R_{2q+3}$ for the analytic continuations of $R_{0,0}, R_{0,1}, \ldots, R_{0,2q+2}$, $R_{0,2q+3}$ from the region outside $\gamma$ to $\mathbb C\setminus\{0\}$.
Then, the above RH conditions for $R$ imply the following discrete RH conditions for $(R_{0},R_1,\ldots,R_{2q+2},R_{2q+3})$.
\subsubsection*{Discrete RH problem for $(R_{0},R_1,\ldots,R_{2q+2},R_{2q+3})$}
\begin{itemize}\item[(a)] $(R_{0},R_1,\ldots,R_{2q+2},R_{2q+3})$ is analytic in $\mathbb C\setminus \{0\}$.
\item[(b)] As $z\to 0$,
\begin{align}
& (R_{0}(z),R_1(z),\ldots,R_{2q+2}(z),R_{2q+3}(z)) \nonumber \\
& \times \begin{pmatrix}
1 & 0 & \cdots & 0 & 0 & 0 & - \frac{(1+z)^{L-r}}{z^{M+N-k_{0}-1}} \\
0 & 1 & \cdots & 0 & 0 & 0 & \frac{(1+z)^{L-r}}{z^{M+N-k_{1}}} \\
\cdots & \cdots & \cdots & \cdots & \cdots & \vdots & \vdots \\
0 & 0 & \cdots & 1 & 0 & 0 & - \frac{(1+z)^{L-r}}{z^{M+N-k_{2q}-1}} \\
0 & 0 & \cdots & 0 & 1 & 0 & \frac{(1+z)^{L-r}}{z^{M+N-k_{2q+1}}} \\
- \frac{(1+z)^{r}}{z^{1+k_{0}}} & - \frac{(1+z)^{r}}{z^{k_{1}}} & \cdots & - \frac{(1+z)^{r}}{z^{1+k_{2q}}} & - \frac{(1+z)^{r}}{z^{k_{2q+1}}} & 1 & \frac{(1+z)^{L}}{z^{M+N}} \\
0 & 0 & \cdots & 0 & 0 & 0 & 1 \\
\end{pmatrix} =\bigO(1). \label{lol5 bis}
\end{align}
\item[(c)] As $z\to\infty$, we have
\begin{align*}
(R_{0}(z),R_1(z),\ldots,R_{2q+2}(z),R_{2q+3}(z))=\left(1+\bigO(z^{-1}),\bigO(z^{-1}),\ldots,\bigO(z^{-1}),\bigO(z^{N-1}),\bigO(z^{-N-1})\right).
\end{align*}
\end{itemize}
From these conditions, we deduce that
\begin{itemize}
\item $R_{2q+2}$ is a polynomial $p_{N-1}$ of degree at most $N-1$,
\item $z^{k_{0}+1}R_0(z)= P_{k_{0}+1}(z)$ is a monic polynomial of degree $k_{0}+1$, 
\item $z^{k_{1}}R_1(z)=:p_{k_{1}-1}(z), z^{1+k_{2}}R_2(z)=:p_{k_{2}}(z), \ldots, z^{1+k_{2q}}R_{2q}(z)=:p_{k_{2q}}(z)$ and $z^{k_{2q+1}}R_{2q+1}(z)=:p_{k_{2q+1}-1}(z)$ are polynomials of degree at most $k_{1}-1, k_{2}, \ldots, k_{2q},k_{2q+1}-1$, respectively,
\item $z^{M+N}R_{2q+3}(z)=:p_{M-1}$ is a polynomial of degree at most $M-1$.
\end{itemize}
(Above, we slightly abused notation, since the polynomials $p_{N-1}, p_{M-1}, p_{k_{1}-1},$ etc may differ even if e.g. $N=M$ or $N=k_{1}$.) The first $2q+2$ conditions in \eqref{lol5 bis} imply as $z\to 0$ that
\begin{align}
& \frac{P_{k_{0}+1}(z)}{z^{k_{0}+1}}-\frac{(1+z)^r}{z^{k_{0}+1}}p_{N-1}(z)=\bigO(1), & & \frac{p_{k_{1}-1}(z)}{z^{k_{1}}}-\frac{(1+z)^r}{z^{k_{1}}}p_{N-1}(z)=\bigO(1), \nonumber \\
& \frac{p_{k_{2}}(z)}{z^{k_{2}+1}}-\frac{(1+z)^r}{z^{k_{2}+1}}p_{N-1}(z)=\bigO(1), & & \frac{p_{k_{3}-1}(z)}{z^{k_{3}}}-\frac{(1+z)^r}{z^{k_{3}}}p_{N-1}(z)=\bigO(1), \nonumber \\
& \qquad \vdots & & \qquad \vdots \nonumber \\
& \frac{p_{k_{2q}}(z)}{z^{k_{2q}+1}}-\frac{(1+z)^r}{z^{k_{2q}+1}}p_{N-1}(z)=\bigO(1), & & \frac{p_{k_{2q+1}-1}(z)}{z^{k_{2q+1}}}-\frac{(1+z)^r}{z^{k_{2q+1}}}p_{N-1}(z)=\bigO(1). \label{lol6}
\end{align}
Given $n_{1},n_{2},n_{3}\in \N$ satisfying $n_{1}\leq n_{2} \leq n_{3}$ and a polynomial $P(z) = \sum_{j=0}^{n_{3}}\alpha_{j}z^{j}$, we define $[P(z)]_{n_{1}}^{n_{2}}:= \sum_{j=n_{1}}^{n_{2}}\alpha_{j}z^{j}$. The asymptotic formulas \eqref{lol6} imply that
\begin{align}
& P_{k_{0}+1}(z) = z^{k_{0}+1} + \big[ (1+z)^{r}p_{N-1}(z) \big]_{0}^{k_{0}}, \nonumber \\
& p_{k_{2j+1}-1}(z) = \big[ (1+z)^{r}p_{N-1}(z) \big]_{0}^{k_{2j+1}-1}, & & j=0,\ldots,q \nonumber \\
& p_{k_{2j}}(z) = \big[ (1+z)^{r}p_{N-1}(z) \big]_{0}^{k_{2j}}, & & j=1,\ldots,q. \label{lol7}
\end{align}
Finally, the last condition in \eqref{lol5 bis} together with \eqref{lol7} yields
\begin{align}\label{lol10}
\frac{q_{M-1}(z)}{z^{M+N}} + \frac{(1+z)^{L}}{z^{M+N}}p_{N-1}(z) -\frac{(1+z)^{L-r}}{z^{M+N}}\bigg( z^{k_{0}+1} + \sum_{j=0}^{q}\big[ (1+z)^{r}p_{N-1}(z) \big]_{k_{2j+1}}^{k_{2j}} \bigg)  =\bigO(1),\qquad z\to 0.
\end{align}
The above asymptotic formula gives $N+M$ conditions for the $N+M$ unknown coefficients of $q_{M-1}$ and $p_{N-1}$. Since
\begin{multline*}
U_{00}(0) = \lim_{z\to 0}\bigg(R_{0}(z)-\frac{(1+z)^r}{z^{k_{0}+1}}R_{2q+2}(z)\bigg) \\
= \lim_{z\to 0}\bigg(1 + \frac{\big[ (1+z)^{r}p_{N-1}(z) \big]_{0}^{k_{0}}}{z^{k_{0}+1}}-\frac{(1+z)^r}{z^{k_{0}+1}}p_{N-1}(z)\bigg) = 1- \alpha_{k_{0}+1},
\end{multline*}
where $\alpha_{k_{0}+1} := \frac{1}{(k_{0}+1)!} \frac{d^{k_{0}+1}}{dz^{k_{0}+1}}\big( (1+z)^{r}p_{N-1}(z) \big)\big|_{z=0}$ is the coefficient of order $k_{0}+1$ in the Taylor expansion of $(1+z)^{r}p_{N-1}(z)$ around $z=0$, we have proved the following.
\begin{theorem}\label{thm:hexagon multi gap}
Let $L,M,N\in\mathbb N_{>0}$ with $M<L$, $r\in\{1,\ldots, L-1\}$, $q\in \N$, $k_{0},k_{1},\ldots,k_{2q},k_{2q+1}\in \Z$, be such that \eqref{inequalities between the kj's} holds and such that
\begin{align}
\begin{cases}
\ds k_{0}+2-k_{1}+\sum_{j=1}^{q}(k_{2j}+1-k_{2j+1}) \leq \min \{r,M,L-r\}, \\
\ds \max\{0,r-L+M\} \leq k_{2q+1} \leq k_{0}+1 \leq \min \{M+N-1,r+N-1\}.
\end{cases} \label{cond on k0 and k1+1}
\end{align}
Find polynomials $q_{M-1}$ and $p_{N-1}$ of degrees at most $M-1$ and $N-1$, respectively, such that \eqref{lol10} holds. Then
\begin{align*}
\frac{G_{L,M,N}^{r,\mathcal{I}^*}}{G_{L,M,N}^{r,\mathcal{I}}} = 1- \alpha_{k_{0}+1},
\end{align*}
where $\alpha_{k_{0}+1} := \frac{1}{(k_{0}+1)!} \frac{d^{k_{0}+1}}{dz^{k_{0}+1}}\big( (1+z)^{r}p_{N-1}(z) \big)\big|_{z=0}$.
\end{theorem}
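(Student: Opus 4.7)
The plan is to follow exactly the strategy used for the $q=0$ case (Theorem \ref{theorem:hextiling}, Subsection \ref{subsec: hexa semi-infinite q=0}), adapting it to the multi-interval setting; in fact, all the algebraic structure required has already been laid out in the $(4+2q)\times(4+2q)$ RH problem for $U$ and its transformation $R=U\tilde Y$ preceding the theorem's statement. The ratio $G_{L,M,N}^{r,\mathcal I^*}/G_{L,M,N}^{r,\mathcal I}$ is first identified with $U_{00}(0)$ via Proposition \ref{prop:hexagonFredholm} (in its version \eqref{id:Fredholmhexagon k1k0}) combined with Theorem \ref{thm:multi gap ratio} for the value $j=0$; the assumption \eqref{cond on k0 and k1+1} is what guarantees that both $\mathcal I$ and $\mathcal I^*$ are admissible so that the relevant gap probabilities are nonzero and the RH problem for $U$ is uniquely solvable.

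With this in hand, I would simplify the RH problem for $U$ by collapsing the two jump contours to $\Sigma_1=\Sigma_2=:\gamma$, factoring the jump as $J_U=\tilde Y J\tilde Y^{-1}$, and passing to $R=U\tilde Y$. The decisive advantage is that the jump matrix $J\tilde J_Y$ of $R$ only involves the elementary functions $(1+z)^{L-r}/z^{M+N-\cdot}$, $(1+z)^r/z^{\cdot}$, $(1+z)^L/z^{M+N}$; in particular, in the equation \eqref{lol5 bis} the Jacobi polynomials are gone. I would then analytically continue the entries of the first row of $R$ from the exterior of $\gamma$ across to $\mathbb C\setminus\{0\}$, obtaining $R_0,\ldots,R_{2q+3}$ meromorphic with isolated singularity only at $0$. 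Using the asymptotics at infinity, each $z^{\text{power}}R_j$ is then forced to be a polynomial of prescribed degree, exactly as itemized just after \eqref{lol5 bis}.

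The heart of the argument is then reading off \eqref{lol10}. Each of the first $2q+2$ components of \eqref{lol5 bis} is a purely local condition at $z=0$ saying that the Laurent expansion has no negative-index terms, and thus pins down the polynomials $p_{k_{2j+1}-1}$, $p_{k_{2j}}$, and the "monic part" of $P_{k_0+1}$, as the truncated Taylor polynomials of $(1+z)^r p_{N-1}(z)$ at appropriate orders; this yields \eqref{lol7}. Substituting these explicit expressions into the last component of \eqref{lol5 bis} produces precisely \eqref{lol10}, which is a linear system of $M+N$ equations for the $M+N$ coefficients of $(q_{M-1},p_{N-1})$ and admits a unique solution by the unique solvability of the RH problem. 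The extraction of $U_{00}(0)$ is then immediate: from $R=U\tilde Y$ one has $U_{00}(z)=R_0(z)-\frac{(1+z)^r}{z^{k_0+1}}R_{2q+2}(z)$, so taking $z\to 0$ and using that $z^{k_0+1}R_0(z)=z^{k_0+1}+[(1+z)^r p_{N-1}(z)]_0^{k_0}$ gives $U_{00}(0)=1-\alpha_{k_0+1}$, the Taylor coefficient of order $k_0+1$.

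The main obstacle is purely bookkeeping: one must verify that with the assumptions \eqref{cond on k0 and k1+1} the degree counts for all polynomials $p_{k_{2j+1}-1}$, $p_{k_{2j}}$, $P_{k_0+1}$, $p_{N-1}$, $q_{M-1}$ are consistent, that the $[\,\cdot\,]_{n_1}^{n_2}$ truncations in \eqref{lol7} land in the admissible ranges, and that the resulting system \eqref{lol10} is indeed non-degenerate. No new analytic ingredient beyond the $q=0$ proof is required; the algebra is a direct book-keeping extension thereof, and the final identity for the ratio follows by exactly the same residue-at-$0$ computation.
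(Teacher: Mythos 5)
Your proposal is correct and follows essentially the same route as the paper's own proof: identify the ratio with $U_{00}(0)$ via Theorem \ref{thm:multi gap ratio}, collapse the contours and conjugate by $\tilde Y$ to get the RH problem for $R$ with elementary jumps, deduce the polynomial structure of $R_0,\ldots,R_{2q+3}$, derive \eqref{lol7} and \eqref{lol10}, and evaluate $U_{00}(0)=1-\alpha_{k_0+1}$ by the local computation at $z=0$. This matches the argument of Subsection \ref{subsec: hexa several finite intervals} step for step.
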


\paragraph{Acknowledgements.} The authors are grateful to Philippe Ruelle for interesting discussions and for pointing out the references \cite{CP2013, CP2015}. CC is a Research Associate of the Fonds de la Recherche
Scientifique - FNRS. CC also acknowledges support from the Swedish Research Council, Grant No. 2021-04626, and from the European Research Council (ERC), Grant Agreement No. 101115687.
TC acknowledges support by FNRS Research Project T.0028.23 and by the
Fonds Sp\'ecial de Recherche of UCLouvain.

\small

\end{document}